\documentclass{amsart}
\usepackage{xypic}

\makeatletter
\@namedef{subjclassname@2020}{%
  \textup{2020} Mathematics Subject Classification}
\makeatother

\numberwithin{equation}{section}
\usepackage[a4paper,left=3.5cm,right=3.5cm]{geometry}
\usepackage{amsmath}
\usepackage{amsfonts}
\usepackage{amssymb}
\usepackage{amscd}
\usepackage{amsthm}
\usepackage{color}
\usepackage{xcolor}
\usepackage{verbatim}

\usepackage{hyperref}
\theoremstyle{plain}
\newtheorem{theoremint}{Theorem}
\newtheorem{theorem}{Theorem}[section]
\newtheorem{proposition}[theorem]{Proposition}
\newtheorem{lemma}[theorem]{Lemma}
\newtheorem{corollary}[theorem]{Corollary}
\theoremstyle{definition}

\newtheorem{definition}[theorem]{Definition}
\newtheorem{remark}[theorem]{Remark}
\newtheorem{construction}[theorem]{Construction}

\DeclareMathOperator{\Ext}{Ext}
\DeclareMathOperator{\MI}{MI}
\DeclareMathOperator{\Hom}{Hom}

\DeclareMathOperator{\Pic}{Pic}

\newcommand{\ZZ}{\mathbb{Z}}
\newcommand{\PP}{\mathbb{P}}
\newcommand{\Oo}{\mathcal{O}}

\newcommand{\rk}{\mathrm{rk}}

\newcommand{\pp}{{\mathbb P}}
\newcommand{\p}{{{\mathbb P}^1}}
\newcommand{\kk}{{\mathbb K}}
\newcommand\sA{{\mathcal A}}
\newcommand\sB{{\mathcal B}}
\newcommand\sC{{\mathcal C}}
\newcommand\sD{{\mathcal D}}
\newcommand\sE{{\mathcal E}}
\newcommand\sF{{\mathcal F}}
\newcommand\sG{{\mathcal G}}

\newcommand\sI{{\mathcal I}}

\newcommand\sL{{\mathcal L}}

\newcommand\sN{{\mathcal N}}
\newcommand\sO{{\mathcal O}}

\newcommand\sT{{\mathcal T}}
\newcommand\sU{{\mathcal U}}

\def\pee#1{\hbox{$ {\mathbb P}^{#1}$}}

  \def \tab#1{\kern #1 truein}


   \newenvironment{sistema}%
  {\left\lbrace\begin{array}{@{}l@{}}}%
  {\end{array}\right.}





  \begin{document}

  \title[H-instanton bundles on three-dimensional polarized projective varieties]{H-instanton bundles on three-dimensional polarized projective varieties
}

\author{V. Antonelli, F. Malaspina }

\keywords{Instanton bundles, Ulrich bundles, rational normal scrolls, moduli spaces, Beilinson spectral sequence}

\subjclass[2020]{Primary: {14J60}; Secondary: {13C14, 14F06, 14F08}}

\thanks{The authors are members of GNSAGA group of INdAM and are supported by the framework of the MIUR grant Dipartimenti di Eccellenza 2018-2022 (E11G18000350001).}

\begin{abstract}
We propose a notion of instanton bundle (called $H$-instanton bundle) on any projective variety of dimension three polarized by a very ample divisor $H$, that naturally generalizes the ones on $\mathbb{P}^3$ and on the flag threefold $F(0,1,2)$. We briefly discuss the cases of Veronese and Fano threefolds. Then we deal with $H$-instanton bundles $\mathcal{E}$ on three-dimensional rational normal scrolls $S(a_0,a_1,a_2)$. We give a monadic description of $H$-instanton bundles and we prove the existence of $\mu$-stable $H$-instanton bundles on $S(a_0,a_1,a_2)$ for any admissible charge $k=c_2(\mathcal{E})H$. Then we deal in more detail with $S(a,a,b)$ and $S(a_0,a_1,a_2)$ with $a_0+a_1>a_2$ and even degree. Finally we describe a nice component of the moduli space of $\mu$-stable bundles whose points represent $H$-instantons. \end{abstract}
\maketitle
\section*{Introduction}
\emph{Mathematical instanton bundles} on $\pee3$ arose as the algebraic counterpart to the (anti)self-dual connections which are solutions to the Yang-Mills equations on $S^4$.
The link between these two apparently distant objects was achieved through twistor theory, as it was developed by R. Penrose. Indeed $\pee3$ is the twistor space of $S^4$.
Prompted by this fact, mathematical instanton bundles on $\pee3$ have been the object of study for the last three decades, becoming one of the milestones in algebraic geometry. An instanton bundle on $\pee3$ is a stable rank two bundle $\sE$ with $c_1(\sE)=0$ and satisfying the so called \emph{instantonic condition} $$h^1(\pee3,\sE(-2))=0.$$

In \cite{Fa} (see also \cite{Kuz} in the case of index two and \cite{Sa} for the case of the del Pezzo threefold of degree $5$) is given the following definition of instanton bundle on Fano threefold $X$ with Picard number one: let  us write the canonical divisor of $(X,H_X)$ as $\omega_X=-(2q_X+r_X)H_X$ where $q_X\geq 0$ and $0\leq r_X\leq 1$, then an instanton bundle $\sE$ on $X$  is a $\mu$-stable rank two vector bundle with $c_1(\sE)=-r_X$ (hence normalized) and
$$H^1(X,\sE(-q_X))=0.$$ So $\sG=\sE(-q_X)$ satisfies $\sG\cong \sG^\vee\otimes\omega_X$ and by Serre duality $H^1(X,\sG)=H^2(X,\sG)=0$.

Apart from this definition there have been several attempts to generalize instanton bundles to Fano varieties of higher Picard number. In \cite{MMP} the authors gave the following definition of instanton bundle over the Flag variety $F(0,1,2)$:
a rank  two vector bundle $\sE$ on the Fano threefold $F(0,1,2)$ is an instanton bundle of charge $k$ if the following properties hold:
\begin{itemize}
\item $c_1(\sE)=0, \ c_2(\sE)=kh_1h_2$;
\item $h^0(\sE)=0$ and $\sE$ is $\mu$-semistable;
\item $h^1(X,\sE(-H_X))=0$;
\end{itemize}
This definition has been adapted in \cite{AM}, \cite{CCGM} and \cite{CG} to other examples of Fano threefolds with Picard number higher than one.
One can observe that in all the cases, by increasing the second Chern class, the intermediate cohomology modules have more generators and become more complicated. Recall that a coherent sheaf $\sE$  is
aCM (aritmetically Cohen-Macauly) if all the intermediate cohomology groups vanish or equivalently if the module $E$ of global sections of
$\sE$ is a maximal Cohen-Macaulay module. So as the charge of instanton bundles grows, instanton bundles become farther from being aCM.
Among aCM sheaves, a special role is played by Ulrich sheaves. These shaves are characterized by the linearity of the minimal graded free resolution over the polynomial ring of their module of global sections.
Ulrich sheaves, originally studied for computing Chow forms,
conjecturally exist over any variety
(we refer to \cite{ESW}).

In order to introduce a notion of instanton bundles related to the previous ones on any polarized projective threefold $(X, H_X)$ we do not give conditions depending on the index (as in the Fano case), but depending only on the polarization $H_X$. Let $K_X$ be the canonical divisor. we give the following definition:

 A rank  two vector bundle $\sE$ on $X$ is an \emph{$H$-instanton bundle} of charge $k$ if the following properties hold:
\begin{itemize}
\item $c_1(\sE)=2H_X+K_X$ and $\deg(c_2(\sE))=k$;
\item $h^0(X,\sE)=0$ and $\sE$ is $\mu$-semistable;
\item $h^1(X,\sE(-H_X))=0$;
\end{itemize}

In Section 1 we will relate this Definition with the usual definitions on $\pee3$ and on other Fano threefolds.

Notice that when $\rk \Pic (X)=1$, except for isolated case $\sE\cong \sO_{\pee3}^{\oplus 2}$ on $\pee3$, the conditions $h^0(X,\sE)=0$ and $\sE$ is $\mu$-semistable  imply the $\mu$-stability. However when the Picard number is higher than one this is no longer true.  For instance on $F(0,1,2)$ it is natural to consider strictly $\mu$-semistable bundles (see \cite{Don} and \cite[Remark 2.2]{MMP}).  It is also possible to find bundles with $H^0(X,\sE)\not= 0$ but satisfying all the other instantonic conditions (see \cite[Theorem 4.2]{MMP} for $F(0,1,2)$ or \cite[Theorem 2.9]{AM} for $\pee1\times\pee1\times\pee1$). On \cite[Remark 2.10]{AM} it is also given an example of a vector bundle satisfying all the conditions of Definition \ref{hinstanton} but the $\mu$-semistability.

Furthermore it is of great interest the connection between $H$-instantons and Ulrich bundles (even in higher rank cases, see \cite{CoMiRo2020}). Indeed Definition \ref{hinstanton} is related to the notion of Ulrich bundle i.e. a vector bundle $\sE$ such that $h^i(X,\sE_X(tH_X))=0$ for any $i$ and $-1\leq t \leq -\dim(X)$. In fact any $H$-instanton bundles satisfies $h^i(X,\sE(-H_X))=0$ for any $i$, $h^0(X,\sE)=h^3(X,\sE(-2H))=0$ and $\chi(\sE)=\chi(\sE(-2H_X))$. Moreover it is possible to prove that for an $H$-instanton bundle $\sE$ on a smooth polarized threefold we have $h^2(X,\sE)=0$. Thanks to Riemann-Roch this gives us the notion of minimal charge, namely the minimal value of the degree of $c_2(\sE)$ such that $\chi(\sE)=-h^1(X,\sE)\leq 0$. In this case $\sE(H_X)$ turns out to be an Ulrich bundle. When the Picard number is one all the Ulrich bundles are $H$-instanton bundles twisted by $H_X$. Of course if $(X,\Oo_X(H_X))$ does not support any rank two Ulrich bundle (for instance a general hypersurface of degree at least six in $\mathbb P^4$ see \cite{KRR}), we may conclude that there cannot exists an $H$-instanton of minimal charge. In general, regardless of the Picard number, if $X$ supports special rank two Ulrich bundles $\sF$ (i.e. $\det(\sF)=4H_X+K_X$) then such bundles are $H$-instanton bundles twisted by $H_X$.


On the two Fano threefolds with index two and Picard number two all the instanton bundles with $c_2(\sE)$ of minimal degree are a twist of an Ulrich bundle (see \cite{MMP} and \cite{CCGM}). If $\sE$ is an instanton bundle on a Fano threefold $X$ with index two and Picard number one we have $h^1(X,\sE)=c_2(\sE)-2$ and $h^0(X,\sE)=h^2(X,\sE)=h^3(X,\sE)=0$ (see \cite[Section 3]{Fa}). So when the charge is minimal, namely $c_2(\sE)=2$, by Serre duality we have $h^i(X,\sE)=h^i(X,\sE(-1))=h^i(X,\sE(-2))=0$ for any $i$ so $\sE$ is Ulrich up to a twist. On the three-dimensional quadric (the only case of index three) the instanton bundle of minimal charge is the spinor bundle which is Ulrich (see also \cite{CoMi} for instanton bundles on smooth hyperquadrics).

In this paper, thanks to this new definition, we deal with the existence of $H$-instanton bundles on some (infinite) families of threefolds.
In Section 1 we discuss about the relation between classical instanton bundles and $H$-instanton bundles in particular for the cases of Veronese and Fano threefolds. Then, from Section 2 we focus our attention to three-dimensional rational normal scrolls $S$ since this a family of threefolds with infinite different canonical divisors $K_X$ and with a nice description of the derived category of coherent sheaves.  As first results, in Section 3 we give three different monadic descriptions of each $H$-instanton bundle (which can be considered as the analog of the monads on $\pee3$) and we characterize the $H$-instantons $\sE$ such that $c_2(\sE)=kHF$ (i.e. the second Chern class is concentrated in a single degree) showing that they are always realized as an extension of suitable line bundles.

After this, in Section 4 we deal with the existence of $H$-instanton bundles and the description of their moduli spaces. The locus of such stable bundles with a given $c_2$ is an open subspace of the moduli space of stable rank two vector bundles bundles with those invariants. For  a large family of Fano threefolds with Picard number one, in \cite[Theorem A]{Fa} the author proves that the moduli spaces of instanton bundles has a generically smooth irreducible component. An analogous result has been obtained for other Fano threefolds with higher Picard number \cite{AM,CCGM,CG,MMP}. In the case of
$\mathbb P^3$, it has been proved that the moduli space of instanton bundles is affine \cite{CO}, irreducible \cite{T1,T2} and smooth \cite{JV}. The rationality is still an open problem in general, being settled only for  low charges \cite{Ba2,Har2,ES,Kat}.

In the case of three-dimensional rational normal scrolls $S$, starting from the minimal charge case which is represented by Ulrich bundles on $S$, we are able to prove the following theorem.
\begin{theoremint}\label{teo1intro}
Let $S=S(a_0,a_1,a_2)$ be a rational normal scroll of degree $c$. Then for every $\alpha\geq 1-c$ there exists a $\mu$-stable $H$-instanton bundle $\sE$ with $c_2(\sE)=H^2+\alpha HF$ and charge $k=\alpha+c$ such that
\[
\Ext_X^1(\sE,\sE)=6(c-1+\alpha), \qquad \Ext_X^2(\sE,\sE)=\Ext_X^3(\sE,\sE)=0
\]
and such that $\sE$ is generically trivial on lines.

In particular there exists, inside the moduli space $\MI(H^2+\alpha HF)$ of $H$-instanton bundles with $c_2(\sE)=H^2+\alpha HF$, a generically smooth, irreducible component of dimension $6(c-1+\alpha)$.
\end{theoremint}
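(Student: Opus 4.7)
My plan is induction on the parameter $\alpha$, with the minimal-charge case $\alpha = 1-c$ as the base and an extension along a fiber of the scroll for the inductive step. At the base step one has $k=1$ and the discussion preceding the theorem identifies $\sE(H)$ with a special rank two Ulrich bundle on $S$; since rational normal scrolls carry explicit Ulrich bundles of rank two (for instance, as extensions of line bundles of ruling type), the initial case is available, and the monadic description from Section~3 expresses such a bundle via the simplest possible monad.

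For the inductive step, given an $H$-instanton $\sE'$ with $c_2(\sE') = H^2 + (\alpha-1)HF$, I would construct $\sE$ of charge $\alpha + c$ as a non-split extension
\[
0 \to \sE' \to \sE \to \sO_\ell(-1) \to 0
\]
where $\ell$ is a general fiber of the scroll and the twist is chosen to match $c_1$. Local freeness holds for a generic extension class, $c_2$ increases by exactly $HF$, and the vanishings $h^0(\sE)=0$ and $h^1(\sE(-H))=0$ transfer through the long exact sequence in cohomology, using that $\sO_\ell(-1)$ and $\sO_\ell(-1) \otimes \sO_S(-H)|_\ell$ are acyclic. The generic triviality on lines is then automatic when $\ell$ is chosen generically. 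Alternatively, one may simply enlarge the monad of Section~3 by adding a rank one summand to the middle term at each increment of $\alpha$, which is often the cleaner route for book-keeping and directly produces the family.

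The $\mu$-stability is verified by ruling out destabilizing rank one subsheaves of $\sE$, combining the inductive stability of $\sE'$ with a genericity argument on $\ell$. The $\Ext$ computations then proceed from Serre duality, the extension sequence (or the monad), and Riemann--Roch on $S$; in particular $\Ext^2(\sE,\sE)=0$ yields unobstructed deformations, so the open locus of $\mu$-stable $H$-instantons near $[\sE] \in \MI(H^2+\alpha HF)$ is a smooth irreducible component of the expected dimension $\Ext^1(\sE,\sE) = 6(c-1+\alpha)$. The main obstacle I anticipate is the $\mu$-stability verification under the inductive step: elementary transformations along a fiber can create rank one subsheaves whose slopes are not immediately ruled out by the low Picard rank considerations, so a careful analysis of which divisor classes can arise as saturations of rank one subsheaves, together with the generality of $\ell$, will be the crucial ingredient.
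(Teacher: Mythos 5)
Your overall plan (induction from the minimal charge, modifying along a curve in a fiber at each step) matches the paper's strategy, but the inductive step as you state it cannot work. You propose to build $\sE$ as an extension $0 \to \sE' \to \sE \to \sO_\ell(-1) \to 0$ with $\sE'$ locally free and $\sO_\ell(-1)$ supported on a curve, and assert that ``local freeness holds for a generic extension class.'' It never holds: if the middle term of such a sequence were locally free of rank two, then $\sE'$ would be the kernel of a surjection onto a sheaf supported in codimension two, forcing $\mathcal{E}xt^1(\sE',\sO_S) \cong \mathcal{E}xt^2(\sO_\ell(-1),\sO_S) \neq 0$, so $\sE'$ could not be locally free --- contradicting the inductive hypothesis. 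The Chern class bookkeeping is also reversed: by Whitney's formula the quotient construction gives $c_2(\sE) = c_2(\sE') - HF$, i.e.\ the charge \emph{decreases}. The paper instead takes the \emph{kernel} $\sE_\phi$ of a surjection $\sE' \to \sO_L$ (with $L$ a line in the class $HF$ inside a fiber plane --- note the fibers of $S \to \pp^1$ are $\pp^2$'s, not curves, so ``a general fiber'' is not the right locus), which raises $c_2$ by $HF$ but is genuinely non-locally-free. The heart of the paper's proof is then the second step you are missing entirely: one computes $\dim\Ext^1(\sE_\phi,\sE_\phi) = \dim\Ext^1(\sE',\sE') + 4$, observes that the non-locally-free sheaves arising this way sweep out only a family of dimension $\dim\Ext^1(\sE',\sE')+2$, and uses the bidual sequence $0 \to \sF \to \sF^{\vee\vee} \to T \to 0$ together with a Riemann--Roch estimate forcing $c_3(\sF^{\vee\vee})=0$ to conclude that the \emph{general deformation} of $\sE_\phi$ in its moduli space is locally free. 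Without this deformation argument there is no locally free sheaf at the end of your induction.

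Your base case is also off target. The theorem concerns $c_2(\sE) = H^2 + \alpha HF$, so $k_1 = 1$; extensions of Ulrich line bundles of ruling type have $c_2$ proportional to $HF$ (i.e.\ $k_1=0$) and are only strictly $\mu$-semistable, as the paper points out after Theorem 3.8. The correct starting point is the unique $\mu$-stable rank two twisted Ulrich bundle $\Omega_{S|\pp^1}(H-F)$, with $c_2 = H^2 + (1-c)HF$, together with the verification (the paper's Lemma on restriction to lines) that it splits as $\sO_{\pp^1}\oplus\sO_{\pp^1}(-1)$ on the generic line in $|HF|$ --- this splitting is precisely what provides the surjection onto $\sO_L$ needed to run the elementary transformation, so it cannot be treated as an afterthought. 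Your fallback suggestion of ``enlarging the monad'' is not a construction either: the monads of Section~3 are necessary conditions satisfied by instantons, and producing a $\mu$-stable locally free cohomology sheaf from a generic choice of monad maps is exactly the existence problem one is trying to solve.
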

In particular, we prove the existence of $H$-instanton bundles for every admissible charge. Notice that in the general case we require that the coefficient of $H^2$ is equal to one. The strategy of the proof consists of applying subsequent elementary transformations along a line in the fiber and deforming the torsion free sheaf obtained to a locally free sheaf. The starting point of this process is a twist of the relative cotangent bundles, which is the unique, $\mu$-stable, (twisted) rank two Ulrich bundle on $S$ (see \cite[Theorem 3.7]{AHMP}), showing once more the strong connection between Ulrich and $H$-instanton bundles.

In the remaining part of the work we deal with the existence of $H$-instanton bundles with $c_2(\sE)=k_1H^2 +k_2HF$ and $k_1$ arbitrarly high. In order to do so, we need to restrict ourselves to some (infinite) subfamilies of rational normal scrolls.

\begin{theoremint}\label{teo2intro}
Let $S$ be a rational normal scroll of degree $c$ either of the form $S(a,a,b)$ with $b\leq a+1$ (resp. $S(a_0,a_1,a_2)$ with $a_0+a_1>a_2$ and even degree). Then for any $k_1\geq0$ and $k_2\geq 1-ck_1+a(k_1-1)$ (resp. $k_2\geq 1-ck_1+b_0(k_1+1)$) there exists a $\mu$-stable instanton bundle with $c_2(\sE)=k_1H^2+k_2HF$ which is generically trivial on lines in the fibers and such that
\[
\Ext_X^1(\sE,\sE)=2\left(c-5+2k_1(c+1)+3k_2\right), \qquad \Ext_X^2(\sE,\sE)=\Ext_X^3(\sE,\sE)=0.
\]
In particular there exists, inside the moduli space $\MI(k_1H^2+k_2HF)$ of instanton bundles with $c_2(\sE)=k_1H^2+k_2HF$, a generically smooth, irreducible component of dimension $2\left(c-5+2k_1(c+1)+3k_2\right)$.
\end{theoremint}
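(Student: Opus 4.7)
My plan is to extend the construction of Theorem~\ref{teo1intro} (the case $k_1=1$) to arbitrarily high $k_1$ by induction, using repeated elementary transformations along vertical lines. Given a $\mu$-stable $H$-instanton $\mathcal{E}_0$ with $c_2(\mathcal{E}_0)=k_1H^2+k_2HF$ that is generically trivial on lines in the fibers, I would pick a generic line $\ell$ in a general fiber; the triviality of $\mathcal{E}_0|_\ell$ produces a surjection $\mathcal{E}_0\twoheadrightarrow \mathcal{O}_\ell$ whose kernel $\mathcal{E}'$ is torsion free with $c_1(\mathcal{E}')=c_1(\mathcal{E}_0)$ and $c_2$ shifted by the class of $\ell$. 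As in Theorem~\ref{teo1intro}, I would then deform $\mathcal{E}'$ to a locally free sheaf $\mathcal{E}$ and iterate. The numerical hypotheses on $S$ (i.e.\ $S(a,a,b)$ with $b\le a+1$, or $a_0+a_1>a_2$ with even degree) are precisely what guarantees the existence of enough such vertical lines with the right splitting type to realise the numerical bounds on $k_2$ stated in the theorem.

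Next I would verify the instantonic conditions on $\mathcal{E}$ inductively, using the long exact sequence
\[
0\to \mathcal{E}(-H)\to \mathcal{E}_0(-H)\to \mathcal{O}_\ell(-H)\to 0
\]
together with the vanishing $H^\bullet(\ell,\mathcal{O}_\ell(-H))=H^\bullet(\mathbb{P}^1,\mathcal{O}_{\mathbb{P}^1}(-1))=0$; the analogous untwisted sequence gives $h^0(X,\mathcal{E})=0$. The $\mu$-semistability of $\mathcal{E}$ follows from that of $\mathcal{E}_0$ because any destabilising sub-line bundle of $\mathcal{E}$ would, via the inclusion $\mathcal{E}\hookrightarrow \mathcal{E}_0$, yield a destabilising subsheaf of $\mathcal{E}_0$. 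Strict $\mu$-stability and the property of being generically trivial on lines are then inherited by a genericity argument on the line $\ell$ chosen at each step of the induction.

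To compute $\Ext^i(\mathcal{E},\mathcal{E})$, I would first observe that $\Ext^3(\mathcal{E},\mathcal{E})=0$ follows by Serre duality from $\Hom(\mathcal{E},\mathcal{E}\otimes \omega_X)=0$, a standard consequence of $\mu$-stability since $\omega_S$ is sufficiently negative along the polarisation. The crucial vanishing $\Ext^2(\mathcal{E},\mathcal{E})=0$ I would prove by exploiting the monadic description of $H$-instantons given in Section~3 to rewrite $\mathcal{E}^\vee\otimes \mathcal{E}$ as the cohomology of an explicit complex, whose hypercohomology can be controlled by a spectral sequence whose relevant terms vanish thanks to the geometry of the two restricted families of scrolls. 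Once $\Ext^2=\Ext^3=0$, Riemann--Roch for $\mathcal{E}^\vee\otimes \mathcal{E}$ combined with $\dim \Hom(\mathcal{E},\mathcal{E})=1$ yields the claimed dimension $2(c-5+2k_1(c+1)+3k_2)$ of $\Ext^1$. Irreducibility of the resulting component of $\MI(k_1H^2+k_2HF)$ follows from the fact that the construction produces a rational, and hence irreducible, parameter space. The hardest step is without doubt the vanishing of $\Ext^2$: this is what forces the restriction to the two special families in the statement, and where the symmetry of $S(a,a,b)$ or the parity condition on $\deg S$ must be used crucially, since in the general case one cannot guarantee the vanishing of the intermediate pages of the spectral sequence.
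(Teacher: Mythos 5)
Your proposal has a fatal structural gap: an elementary transformation along a line $\ell$ in a fiber (class $HF$) changes the second Chern class by $+HF$ only, so iterating it from the $k_1=1$ base case of Theorem \ref{teo1intro} can never increase the coefficient $k_1$ of $H^2$. This is exactly the limitation the paper points out after Construction \ref{deformation}: the iterative process raises $k_2$ one step at a time but leaves $k_1$ fixed. Consequently your induction "to arbitrarily high $k_1$ by repeated elementary transformations along vertical lines" cannot produce any bundle with $k_1\geq 2$, and your claim that the numerical hypotheses on $S$ guarantee "enough such vertical lines" misidentifies the role of those hypotheses --- vertical lines exist in abundance on every scroll.

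What the paper actually does, and what your argument is missing, is a separate construction of the base cases with arbitrary $k_1$: on $S(a,a,b)$ (resp.\ on even-degree scrolls with $a_0+a_1>a_2$) there is a quadric surface $Q\cong\pee1\times\pee1$ inside $S$ (resp.\ inside a general hyperplane section), one of whose rulings consists of pairwise disjoint rational normal curves in the class $H^2-(a+b)HF$ (resp.\ $H^2-b_0HF$). Applying the Hartshorne--Serre correspondence (Proposition \ref{HScorr}) to a disjoint union of $\alpha$ such curves yields a $\mu$-stable instanton with $c_2=\alpha H^2+(\text{minimal }k_2)HF$, generically of splitting type $\sO_{\p}\oplus\sO_{\p}(-1)$ on lines in the fibers (Propositions \ref{serreexistence2} and \ref{serreexistence3}); the restrictions on the $a_i$ are precisely what guarantees the existence of these disjoint \emph{horizontal} curves and the vanishing $h^1(Y,\sN_{Y/S})=0$ needed for $\Ext^2(\sE,\sE)=0$. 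Only then is Construction \ref{deformation} applied to sweep out all admissible $k_2$ for each fixed $k_1$. Your treatment of $\Ext^2$ via a hypercohomology spectral sequence for the monad is also not carried out and is not how the paper obtains the vanishing (it uses the Serre exact sequence and the explicit normal bundle of $Y$); but the decisive defect is the absence of any mechanism for increasing $k_1$.
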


The proof of this theorem is obtained combining two different powerful techniques. The first step is to construct some suitable vector bundles using the Serre's correspondence between rank two vector bundles and one-dimensional subschemes on $S$. Indeed the main reason for the restrictions imposed on $S$ is to ensure the existence of suitable disjoint families of rational curves.

The second step is to use the strategy of the proof of Theorem \ref{teo1intro}. The crucial difference is that the base case of the subsequential elementary transformations is given by the vector bundles constructed via Serre's correspondence in step one.

As one can expect, the geometry of $S$ strongly depends on the invariants $a_i$. For this reason we treat separately the case $a_0=a_1$ (Section 5) and the case $a_0+a_1>a_2$ and even degree (Section 6). In the first case, the condition on the $a_i$'s guarantees the existence of a two-dimensional subvariety isomorphic to $\p\times\p$. In the second case, although the conditions seem a little curious, they are strongly related to the form of the general hyperplane section of the rational normal scroll.

{\bf{Acknowledgements}}: The authors want to thank C. Casagrande, G. Casnati, S. Marchesi and J. Pons-Llopis for the helpful discussions.

\section{$H$-instantons and classical instantons}

Let $\kk$ be an algebraically closed field of characteristic $0$. Given an integer $N$, set
$\PP^N$ for the projective space of hyperplanes through the origin of
$\kk^{N+1}$.
Let $X \subset \PP^N$
be a closed integral subscheme of dimension
$n$.
We assume throughout the paper that $X$ is non-degenerate, namely,
there is no hyperplane of $\PP^N$ that contains $X$.
The variety $X$ is equipped with the very ample line bundle $\sO_X(1)$
defined as the restriction of $\sO_{\PP^N}(1)$ via the embedding $X
\subset \PP^N$. We will write $H_X$ for the divisor
class of $\sO_X(1)$.

The coordinate ring $R$ of $\PP^N$ is the graded polynomial algebra in
$N+1$ variables with the standard grading, namely
$R=\kk[x_0,\ldots,x_N]$.
The homogeneous coordinate ring $\kk[X]$ is the graded algebra
$\kk[X]=R/I_X$, where $I_X$ is the homogeneous radical ideal of
polynomials vanishing on $X$.

The degree of $X$ is computed via the Hilbert polynomial of $I_X$. We will
be denoted it by $d$.

Given a coherent sheaf $\sE$ on $X$,
the $i$-th \textit{cohomology module} of $\sE$ is the $\kk[X]$-module:
\[
H^i_*(\sE)=\bigoplus_{k\in\ZZ}H^i(X,\sE\otimes\sO_X(kH_X)).
\]

For $i \ge 1$, the $\kk[X]$-modules $H^i_*(\sE)$ are artinian.
If $\sE$ is locally free, then $H^i_*(\sE)$ is finitely generated over $\kk[X]$.

\begin{definition}
A coherent sheaf $\sE$ on $X$  is called \textit{aCM}, standing for
\textit{Arithmetically Cohen-Macaulay}, if $\sE$ is locally
Cohen-Macaulay on $X$ and:
 \[
 H^i_*(\sE)=0, \qquad \mbox{$\forall i \in  \{1,\ldots,n-1\}$.}
 \]

 Equivalently, the
 minimal graded free resolution of the module of global sections
 $E=H^0_*(\sE)$, seen as $R$-module, has length $N-n$.
\end{definition}

The variety $X$ itself is said to be aCM if $X$ is projectively
 normal and $\sO_X$ is aCM. This is equivalent to ask that $\kk[X]$ is
 a graded Cohen-Macaulay ring, which in turn amounts to the fact
 that the minimal graded free resolution of
 $\kk[X]$ as $R$-module has length $N-n$.
 In this case, the line bundles $\sO_X(kH_X)$
 are aCM. Among aCM sheaves the Ulrich ones play a crucial role. 
 \begin{definition} Let $d$ be the degree of the embedded variety $X
   \subset \PP^N$.
   For $r>0$, a rank-$r$ aCM sheaf $\sE$ on $X$ is said to be
   \textit{Ulrich} if  $H^0(X,\sE(-H_X)) = 0$ and $h^0(X,\sE) =
   rd$.
 \end{definition}
 
 Let us recall the notion of (semi)stability. 
 \begin{definition}
 For each sheaf $\sE$ on $S$ the {\sl slope} of $\sE$ with respect to $H$ is the rational number $\mu(\sE):=c_1(\sE)H^2/\rk(\sE)$ and the reduced Hilbert
polynomial $p_{\sE}(t)$ of a bundle $\sE$ over $X$ is
$ p_{\sE}(t):=\chi(\sE(th))/\rk(\sE).$

We say that a vector bundle $\sE$ is {\sl $\mu$-stable} (resp. {\sl $\mu$-semistable}) with respect to $H$ if  $\mu( \sG) < \mu(\sE)$ (resp. $\mu(\sG) \le \mu(\sE)$) for each subsheaf $ \sG$ with $0<rk(\sG)<rk(\sE)$.

On the other hand, $\sE$ is said to be Gieseker semistable  with respect to $H$ if for all $ \sG$ as above one has
$$
p_{ \sG}(t) \leq  p_{\sE}(t),
$$
and Gieseker stable again if equality cannot hold in the above inequality.
\end{definition}

In this section we present some general results about $H$-instanton, relating them with the previous notions cited in the Introduction.

Let us recall that every instanton bundle $\sF$ with $c_2(\sF)=k$ on $\mathbb P^3$ is the cohomology of a linear monad of the form
\begin{equation}\label{monP}
0 \to \sO_{\mathbb P^3}(-1)^{\oplus k} \xrightarrow{\alpha}  \sO_{\mathbb P^3}^{\oplus 2k+2}
\xrightarrow{\beta} \sO_{\mathbb P^3}(1)^{\oplus k} \to 0.
\end{equation}
It is also possible to characterize instanton bundles on $\mathbb P^3$ as the cohomology of a second monad:
$$
0 \to \sO_{\mathbb P^3}(-1)^{\oplus k} \xrightarrow{\alpha}  \Omega_{\mathbb P^3}(1)^{\oplus k}
\xrightarrow{\beta} \sO_{\mathbb P^3}^{\oplus 2k-2} \to 0.$$

Conversely every rank two vector bundle defined as the cohomology of these monads is stable, has $c_1(\sF)=0$, $c_2(\sF)=k$ and $h^1(\pee3,\sF(-2))=0$, hence it is an instanton bundle. In light of this, we give the following definition:

\begin{definition}\label{hinstanton}
 Let  ($X$, $H_X$) be a polarized projective variety of dimension three and let $K_X$ be the canonical divisor.
 A rank  two vector bundle $\sE$ on $X$ is an $H$-instanton bundle of charge $k$ if the following properties hold:
\begin{itemize}
\item $c_1(\sE)=2H_X+K_X$ and $\deg(c_2(\sE))=k$;
\item $h^0(X,\sE)=0$ and $\sE$ is $\mu$-semistable;
\item $h^1(X,\sE(-H_X))=0$;
\end{itemize}
We call $E$ quasi $H$-instanton bundle if all the above conditions are satisfied except the $\mu$-semistability.

We call $E$ almost $H$-instanton bundle if all the above conditions are satisfied except $h^0(E)=0$.
\end{definition}

From the definition it follows that all the generators of the cohomology module $H_\ast^1(\sE)$ are concentrated in degree 0, i.e. we have the following Lemma (which is a slight generalization of \cite[Lemma 1]{Fa}).

\begin{lemma}\label{h1}
Let $\sE$ be an $H$-instanton bundle on a smooth polarized threefold $(X,\sO_X(H))$. Then 
$$
H^1(X,\sE(-tH))=0 \quad \text{for all $t\geq1$}.
$$
\end{lemma}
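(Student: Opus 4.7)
The plan is to induct on $t\geq 1$, with the base case $t=1$ being the instantonic hypothesis itself. The key auxiliary fact I would establish first is that on a general hyperplane section $Y\in |H|$—which is smooth and integral by Bertini, since $H$ is very ample on the smooth $X$—one has $H^0(Y,\sE_Y)=0$. This follows by tensoring $0\to \sO_X(-H)\to \sO_X\to \sO_Y\to 0$ with $\sE$ and reading off the long exact sequence, whose relevant piece $H^0(X,\sE)\to H^0(Y,\sE_Y)\to H^1(X,\sE(-H))$ has both outer terms zero by the very definition of an $H$-instanton bundle.

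From this I would deduce that $H^0(Y,\sE_Y(-tH))=0$ for every $t\geq 1$. Indeed, a nonzero section $s\in H^0(Y,\sO_Y(tH))$ exists since $H|_Y$ is very ample, and $s$ is a non-zero-divisor on the integral surface $Y$; multiplication by $s$ therefore yields an injection $\sE_Y(-tH)\hookrightarrow \sE_Y$ of locally free sheaves. Passing to global sections gives the embedding $H^0(Y,\sE_Y(-tH))\hookrightarrow H^0(Y,\sE_Y)=0$.

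For the inductive step, assume $H^1(X,\sE(-tH))=0$ with $t\geq 1$. Tensoring the same Koszul-type sequence by $\sE(-tH)$ produces $0\to \sE(-(t+1)H)\to \sE(-tH)\to \sE_Y(-tH)\to 0$, and I would use the long exact sequence fragment $H^0(Y,\sE_Y(-tH))\to H^1(X,\sE(-(t+1)H))\to H^1(X,\sE(-tH))$. The left term vanishes by the previous paragraph and the right by the induction hypothesis, forcing $H^1(X,\sE(-(t+1)H))=0$ as desired.

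The argument is essentially formal once one notices the cascading role played by $H^0(Y,\sE_Y)$, so I would not expect any serious obstacle; the only mildly clever point is recognizing that multiplication by a section of $\sO_Y(tH)$ reduces the vanishing on $Y$ to the single fact $H^0(Y,\sE_Y)=0$. Notably, $\mu$-semistability is not used anywhere—only the cohomological vanishings $H^0(X,\sE)=H^1(X,\sE(-H))=0$ from the definition, which is exactly why the conclusion should hold as stated on any smooth polarized threefold.
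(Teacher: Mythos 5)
Your proof is correct, and it shares the paper's overall skeleton: both arguments induct on $t$, restrict to a general (smooth, integral) hyperplane section $Y\in|H|$, and chase the long exact sequence of $0\to\sE(-(t+1)H)\to\sE(-tH)\to\sE_Y(-tH)\to0$. Where you genuinely diverge is in the key sub-step, the vanishing $H^0(Y,\sE_Y(-tH))=0$ for $t\geq1$. The paper obtains it by invoking Maruyama's restriction theorem to transfer $\mu$-semistability from $\sE$ to $\sE_Y$ and then reading the vanishing off from semistability; you instead derive $H^0(Y,\sE_Y)=0$ directly from the two defining vanishings $H^0(X,\sE)=H^1(X,\sE(-H))=0$ and then embed $H^0(Y,\sE_Y(-tH))$ into $H^0(Y,\sE_Y)$ by multiplying with a nonzero section of the very ample $\sO_Y(tH)$. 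Your route is more elementary and, as you correctly point out, never uses $\mu$-semistability, so the statement extends verbatim to the paper's ``quasi $H$-instanton'' bundles. It is also more robust: the semistability route needs $\mu(\sE_Y(-tH))<0$, which for $c_1(\sE)=2H+K_X$ reads $(1-t)H^3+\tfrac{1}{2}K_XH^2<0$ and at $t=1$ amounts to $K_XH^2<0$ --- a condition that holds in the Fano-type settings the paper focuses on but is not automatic on an arbitrary smooth polarized threefold, whereas your multiplication-by-a-section argument sidesteps this issue entirely.
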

\begin{proof}
The proof goes by induction on $t$, with $t=1$ being the base case descending directly from Definition \ref{hinstanton}. Suppose $H^1(X,\sE(-kH))=0$ with $k>1$ and we show that $H^1(X,\sE(-(k+1)H))=0$. Let $S$ be a general hyperplane section of $X$ and consider the natural short exact sequence
\begin{equation}\label{hyperplane}
0\to \sO_X(-H)\to\sO_X\to \sO_S\to0.
\end{equation}
Now tensor \eqref{hyperplane} by $\sE(-kH)$. Since $\sE$ is $\mu$-semistable the same holds for its restriction to a general hyperplane section $S$ thanks to \cite[Theorem 3.1]{Mar}. In particular $H^0(\sE_S(-kH))=0$ for all $k<0$. Considering the long exact sequence induced in cohomology we obtain an injection $H^1(X,\sE(-(k+1)H))\subseteq H^1(X,\sE(-kH))$, but the latter is zero because of the inductive hypothesis, thus we obtain the thesis.
\end{proof}
In particular Lemma \ref{h1}, thanks to Serre's duality, implies that $H^2(X,\sE)=0$ and thus $\chi(\sE)=-h^1(X,\sE)$. Because of this it makes sense to speak about instanton of minimal charge, since they are the ones for which $\chi(\sE)=0$.

Now we deal with the relation between $H$-instanton and classical instanton bundles on $\pee3$.
\begin{itemize}

\item On $(\pee3, \Oo_{\pee3}(2))$ the $H$-instanton bundles coincide with the classical instanton bundles.

\item On $(\pee3,\sO_{\pee3}(1))$ if $\sF$ is a classical instanton bundle then $\sE\cong\sF(-1)$ is an $H$-instanton bundle, conversely if $\sE$ is an $H$-instanton bundle then $\sF\cong\sE(1)$ is a classical instanton bundle if and only if $\sE\ncong\Oo_{\pee3}(-1)\oplus\Oo_{\pee3}(-1)$. Indeed if $H=\Oo_{\pee3}(1)$,  then we get $c_1(\sE)=-2$ and $h^1(\pee3,\sE(-H))=0$. So we have that $\sF\cong\sE(1)$ is an instanton bundle in the usual sense with only difference being that $H^0(\pee3,\sF)$ could be non zero. By Ferrand's Remark \cite[Remark II.3.4]{OSS}, the only strictly $\mu$-semistable rank two vector bundle on $\mathbb P^3$ with $c_1(\sE)=-2$ and $h^1(\pee3,\sE(-1))=0$ is $\Oo_{\mathbb P^3}(-1)^{\oplus 2}$. Then we can conclude that the only strictly $\mu$-semistable case is the case of minimal charge $\Oo_{\mathbb P^3}(-1)^{\oplus 2}$ which is Ulrich up to twist. All the other $H$-instanton bundles coincide with usual ones twisted by $\sO_{\pp^3}(-1)$.

\item On $(\pee3,\sO_{\pp^3}(d))$ with $d\geq 2$ the situation is slightly different. If $\sE$ is an $H$-instanton bundle, $\sF=\sE(-d+2)$ is a classical instanton bundle with extra condition that $h^0(\pee3,\sF(d-2))=0$. Since $h^2(\pee3,\sF(d-2))=h^1(\pee3,\sF(-d-2))=0$, in order to have $h^0(\pee3,\sF(d-2))=0$ we must have $\chi(\sF(d-2))\leq 0$. By a Riemann Roch computation
$$\chi(F(d-2))=\frac{1}{3}d(d-1)(d+1)-d(d-2)^2-dc_2(F)\leq 0,$$
and thus we obtain $$c_2(F)\geq \frac{1}{3}(d-1)(d+1).$$
Hence, by \cite{HH} there exist rank two bundles $\sF$ with natural cohomology and in particular with $h^1(\sE)=-\chi(\sE)$ and such that $\sE$ has no global sections  if $$c_2(\sE)=k=c_2(\sF)+(d-2)^2\geq \frac{1}{3}(d-1)(d+1)+(d-2)^2.$$
We conclude that the $H$-instanton bundles $\sE$ have charge $k\geq \frac{1}{3}(d-1)(d+1)+(d-2)^2$ and arise from a monad
\begin{equation}\label{ver}
0\to\Oo_{\pp^3}(d-3)^{\oplus k-(d-2)^2}\to\Oo_{\pp^3}(d-2)^{\oplus 2(k-(d-2)^2)+2}\to\Oo_{\pp^3}(d-1)^{\oplus k-(d-2)^2}\to 0.
\end{equation}
If $d\not=3m$ the minimal charge is reached and all the rank two Ulrich bundles are $H$-instanton bundles tensored by $\Oo_{\pp^3}(d)$. In the case $d=3m$ the non-existence of Ulrich bundles was proved in \cite{ESW}.
\end{itemize}
In the case of Fano threefolds $X$ with index two or three and Picard number one, the definition of $H$-instanton bundle coincide with the one given in \cite{Fa}.

The case of index one is much more complicated. Considering the definition in Section 4 of \cite{Fa} an instanton bundle $\sF$ has $c_1(\sF)=-1$ and $h^1(X,\sF)=0$. So $\sE=\sF(1)$ is a $H$-instanton bundle if $h^0(X,\sF(1))=0$. Notice that $c_2(\sF)=c_2(\sE)$ and by Riemann--Roch
$$\chi(\sE)=\frac{1}{3}\bigl(2g_X-2-3c_2(E)+g_X+11\bigr),$$
where $g_X=-\frac{K_X^3}{2}+1$ is the genus of the Fano threefold.

Thanks to the extra condition $h^0(X,\sE)=0$ we get $\chi(\sE)=-h^1(X,\sE)\leq 0$, thus $c_2(\sE)\geq g_X+3$.

In \cite{Fa} the author proves that the moduli space of instanton bundles is non empty when $c_2(\sF)=k>\frac{g_X}{2}-1$ but the $H$-instanton bundles exist only when $k\geq g_X+3$. In particular for $g_X=12$ in \cite{Fa} is given a monad for any $k\geq 8$. The same monad tensored by $\Oo_X(1)$ produces also $H$-instanton bundles but only for $k\geq 15$. In this way the minimal charge correspond to a twist of an Ulrich bundle, thus also in the case of Fano threefold of index one and Picard one we have a strong analogy with the case of higher index described above. 

In the next sections our main characters will be rational normal scrolls of dimension three.
\section{Rational normal scrolls of dimension three}
Let $S=S(a_0, a_1, a_2)$ be a smooth rational normal scroll of dimension three over an algebraically closed field $k$ of characteristic 0. $S$ is the image of $\PP (\sD)$ via the morphism defined by $\Oo_{\PP (\sD)}(1)$, where $\sD \cong \oplus_{i=0}^2 \Oo_{\PP^1}(a_i)$ is a vector bundle of rank $3$ on $\PP^1$ with $0< a_0 \le  a_1 \le a_2$. Letting $\pi : \PP (\sD) \rightarrow \PP^1$ be the projection, we may denote by $H$ and $F$, the hyperplane section corresponding to $\Oo_{\PP(\sD)}(1)$ and the fibre corresponding to $\pi^*\Oo_{\PP^1}(1)$, respectively.

These varieties have a rather nice geometric description. In $\pp^N$ let us take three complementary linear subspaces $\Lambda_0$, $\Lambda_1$ and $\Lambda_2$ of dimension $a_0$, $a_1$ and $a_2$ respectively. In each $\Lambda_i$ consider a rational normal curve $C_i$ of degree $a_i$ and choose isomorphisms $\varphi_i:\p\to C_i$. Then we can describe the variety $S(a_0,a_1,a_2)$ as
\[
S(a_0,a_1,a_2)=\bigcup_{p\in\p}{\langle \varphi_0(p),\varphi_1(p),\varphi_2(p)\rangle}\subset \pp^N
\]
Observe that these varieties have degree $c=a_0+a_1+a_2$ in $\pp^{c+2}$, i.e. they are varieties of minimal degree.

We also have a complete description of the Chow ring of $S$. The Picard group $\textrm{Pic}(S)$ is given by $\textrm{Pic} (S)\cong \ZZ\langle H,F\rangle$, $A^2(X)\cong \ZZ \langle H^2,HF\rangle$  and we have the relations $H^3=c$, $H^2F$=1 and $F^2=0$. Moreover the canonical divisor is given by $\omega_S \cong-3H+(c-2)F$.

Let us recall some natural short exact sequences that we will use through the paper. Let us begin with the dual of the relative Euler exact sequence of $S$:
\begin{equation}\label{eq1}
0\to \Omega_{S|\PP^1}  \to \oplus_{i=0}^{2}\Oo_S(-H+a_iF) \to \Oo_S \to 0,
\end{equation}
and so we have that the relative canonical line bundle is given by $\omega_{S|\PP^1} \cong \Oo_S(-3H+cF)$. We will use also

\begin{equation}\label{eq2}
0\to \Oo_S(-3H+cF) \to \oplus_{i=0}^{2}\Oo_S(-2H+(c-a_i)F) \to \Omega_{S|\PP^1}  \to 0,
\end{equation}
and
\begin{equation}\label{eq3}
0\to \Oo_S(-2F) \to \Oo_S(-F)^{\oplus 2} \to \Oo_S  \to 0,
\end{equation}
Now we recall how to compute the cohomology of line bundles on $S$.
\begin{lemma}[\cite{EH}]\label{lem0}
For any $i=0,\ldots,n+1$, we have
\begin{itemize}
\item [(i)] $H^i(S, \Oo_S(aH+bF)) \cong H^i(\PP^1, \mathrm{Sym}^a\sD \otimes \Oo_{\PP^1}(b))$ if $a\ge 0$;
\item [(ii)] $H^i(S, \Oo_S(aH+bF)) =0$ if $-3<a<0$;
\item [(iii)] $H^i(S, \Oo_S(aH+bF)) \cong H^{3-i}(\PP^1, \mathrm{Sym}^{-a-3}\sD \otimes \Oo_{\PP^1}(c-b-2))$ if $a\le -3$.
\end{itemize}
\end{lemma}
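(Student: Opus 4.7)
The natural tool is the projective bundle projection $\pi:S=\PP(\sD)\to\PP^1$, whose fibres are copies of $\PP^2$, together with the Leray spectral sequence, which degenerates at $E_2$ because $\PP^1$ has cohomological dimension one. The first step is thus to compute the higher direct images $R^i\pi_*\Oo_S(aH+bF)$; by the projection formula this amounts to computing $R^i\pi_*\Oo_S(aH)$ and twisting the result by $\Oo_{\PP^1}(b)$.

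For $a\ge 0$ the standard computation for projective bundles gives $\pi_*\Oo_S(aH)\cong \mathrm{Sym}^a\sD$ with $R^i\pi_*\Oo_S(aH)=0$ for $i\ge 1$. For $-3<a<0$ all $R^i\pi_*\Oo_S(aH)$ vanish, since on each $\PP^2$-fibre every cohomology group of $\Oo_{\PP^2}(a)$ is zero; in particular case (ii) drops out immediately from the Leray spectral sequence, and for case (i) Leray gives $H^i(S,\Oo_S(aH+bF))\cong H^i(\PP^1,\mathrm{Sym}^a\sD\otimes\Oo_{\PP^1}(b))$, as claimed.

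For case (iii) I would avoid chasing the dualisation coming from relative Serre duality on $\pi$ and instead apply Serre duality on $S$ directly. Since $\omega_S\cong\Oo_S(-3H+(c-2)F)$, one has
\[
H^i(S,\Oo_S(aH+bF))^\vee\cong H^{3-i}\bigl(S,\Oo_S((-a-3)H+(c-b-2)F)\bigr).
\]
When $a\le -3$ the exponent $-a-3$ is non-negative, so the already-established case (i) applies to the right-hand side, yielding the claimed identification (as vector spaces of the same dimension) with $H^{3-i}(\PP^1,\mathrm{Sym}^{-a-3}\sD\otimes\Oo_{\PP^1}(c-b-2))$.

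The only point requiring care is a bookkeeping one: one must pick the convention on $\PP(\sD)$ compatible with the identity $\omega_{S|\PP^1}\cong\Oo_S(-3H+cF)$ used earlier in the section, and one must remember that the isomorphism in (iii) is obtained via dualisation rather than as a canonical map. I do not expect any real obstacle beyond checking signs and conventions in the two invocations of Serre duality.
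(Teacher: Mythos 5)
Your argument is correct and complete. The paper itself gives no proof of this lemma --- it is quoted directly from the reference \cite{EH} --- so there is nothing to diverge from; what you have written is precisely the standard argument that the citation delegates: for (i) and (ii) the projection formula reduces everything to $R^i\pi_*\Oo_S(aH)$, which is $\mathrm{Sym}^a\sD$ concentrated in degree $0$ for $a\ge 0$ and vanishes identically for $-3<a<0$, so the Leray spectral sequence collapses; for (iii) Serre duality on $S$ with $\omega_S\cong\Oo_S(-3H+(c-2)F)$ sends $\Oo_S(aH+bF)$ to $\Oo_S((-a-3)H+(c-b-2)F)$, to which (i) applies. You are also right to flag the one genuine subtlety, namely that the isomorphism in (iii) is a non-canonical identification of a finite-dimensional space with its dual; the lemma as stated only asserts an isomorphism of vector spaces, so this is harmless.
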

Using Lemma \ref{lem0} it is not difficult to retrieve the following computations
\begin{remark}\label{rem0}
From the above Lemma we have
\begin{itemize}
\item [(i)] $H^i(S, \Omega_{S|\PP^1}(bF)) \cong H^{i-1}(S, \Oo_S(bF))$ for any $i$ and $b$ by (\ref{eq1}) tensored by $\Oo_S(bF)$;
\item [(ii)] $H^i(S, \Omega_{S|\PP^1}(H+bF))=0$ for any $i$ and $b$ by (\ref{eq2}) tensored by $\Oo_S(H+bF)$;
\item [(iii)] $H^i(S, \Omega_{S|\PP^1}(2H+bF)) \cong \oplus_{j=0}^{2}H^{i}(S, \Oo_S((c-a_j+b)F))$ for any $i$ and $b$ by (\ref{eq2}) tensored by $\Oo_S(2H+bF)$;
    \item [(iv)] $H^i(S, \Omega_{S|\PP^1}(-H+bF)^\vee) \cong \oplus_{j=0}^{2}H^{i}(S, \Oo_S((-a_j+b)F))$ for any $i$ and $b$ by the dual of (\ref{eq1}) tensored by $\Oo_S(-H+bF)$.
\end{itemize}
\end{remark}

In the particular case of rational normal scrolls of dimension three, Definition \ref{hinstanton} takes the following form.
\begin{definition}\label{scrollinsta}
A rank two $\mu$-semistable vector bundle $\sE$ on $S$ is called an $H$-instanton bundle of charge $k$ if and only if
\begin{itemize}
\item $c_1(\sE)=-H+(c-2)F$;
\item $h^0(S,\sE)=h^1(S,\sE(-H))=0$
\item $c_2(\sE)=k_1H^2+k_2HF$ with $k_1+k_2=k$.
\end{itemize}
\end{definition}
In what follows we will deal with instanton bundles with respect to the very ample line bundle $\sO_{\pp(\sD)}(1)$, thus we will often write instanton in place of $H$-instanton.

As a consequence of Definition \ref{scrollinsta}, since $\sE^\vee\cong\sE(H-(c-2)F)$, we have by Serre duality that
\begin{equation}\label{sd}
h^i(S,\sE(aH+bF))=h^{3-i}(S,\sE((-a-2)H-bF))
\end{equation}
and in particular $h^i(S,\sE(-H))=0$ for any $i$.

In order to check $\mu$-semistability of sheaves we will use the following criterion.
\begin{proposition}\cite[Theorem 3]{JMPS}\label{hoppe}
Let $\sE$ be a rank two vector bundle over a polycyclic variety $X$ and let $L$ be a polarization on $X$. $\sE$ is $\mu$-(semi)stable if and only if
\[
H^0(X,\sE\otimes \sO_X(B))=0
\]
for all $B \in Pic(X)$ such that $\delta_L(B) \underset{(<)}{\leq} -\mu_L(\sE)$, where $\delta_L(B)=\deg_L(\sO_X(B))$.
\end{proposition}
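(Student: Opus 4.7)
The plan is to use the standard Hoppe-type argument: for a rank two bundle, any destabilizing subsheaf has rank one, and, on a sufficiently nice (at least locally factorial) variety, can be saturated to an honest line subbundle. A line subbundle $\sO_X(A)\hookrightarrow\sE$ is in turn equivalent to a nonzero global section of $\sE\otimes\sO_X(-A)$. I would therefore prove the biconditional by contraposition, matching the strictness of the inequalities on the two sides.

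\emph{Forward direction.} I would assume that some $B\in\Pic(X)$ with $\delta_L(B)\leq-\mu_L(\sE)$ (respectively $<$) admits a nonzero section $s\in H^0(X,\sE\otimes\sO_X(B))$. Reading $s$ as a morphism $\sO_X(-B)\to\sE$ and using that $X$ is integral, $s$ is automatically injective (its kernel would be a torsion subsheaf of an invertible sheaf, hence zero). Setting $A=-B$ produces a rank one subsheaf $\sO_X(A)\subset\sE$ of slope $\deg_L(A)=-\delta_L(B)\geq\mu_L(\sE)$ (respectively $>$), contradicting $\mu$-stability (respectively $\mu$-semistability).

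\emph{Reverse direction.} Conversely, if $\sE$ fails to be $\mu$-stable (respectively $\mu$-semistable) then, since $\rk(\sE)=2$, there exists a rank one subsheaf $\sG\subset\sE$ with $\mu_L(\sG)\geq\mu_L(\sE)$ (respectively $>$). Replacing $\sG$ by its saturation in $\sE$ gives a line subbundle $\sO_X(A)\hookrightarrow\sE$ whose slope can only have increased, so $\deg_L(A)\geq\mu_L(\sE)$ (respectively $>$). Setting $B=-A$ then yields $\delta_L(B)\leq-\mu_L(\sE)$ (respectively $<$) together with a nonzero element of $H^0(X,\sE\otimes\sO_X(B))$, completing the contrapositive.

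\emph{Main obstacle.} The only real subtlety is verifying that the saturation of a rank one subsheaf inside a locally free sheaf is itself invertible. This requires $X$ to be locally factorial (smoothness would suffice), which is precisely where the hypothesis that $X$ is a ``polycyclic variety'' in the sense of \cite{JMPS} enters; without it the argument would have to be carried out in terms of reflexive hulls and Weil divisor classes. Everything else is formal bookkeeping between line subbundles and global sections of twists.
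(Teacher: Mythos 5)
The paper gives no proof of this proposition: it is imported verbatim as \cite[Theorem 3]{JMPS}, so there is no in-paper argument to compare against. Your proof is correct and is exactly the standard Hoppe-type argument underlying that reference: the pairing of the non-strict inequality with stability and the strict one with semistability is handled consistently, injectivity of a nonzero section viewed as a map $\sO_X(-B)\to\sE$ is justified correctly on an integral variety, and the reduction of a destabilizing rank-one subsheaf to a line subbundle via saturation is the right mechanism. The only point worth making explicit is the one you already flag, namely that the varieties in \cite{JMPS} are smooth (with finitely generated free Picard group), so a saturated rank-one subsheaf of $\sE$ is reflexive and hence invertible; in the paper's application to smooth rational normal scrolls this is automatic.
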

\begin{remark}\label{rem}
In our case $\mu_H(\sE)=(-H+(c-2)F)H^2/2=(-c+c-2)/2=-1$ and $\delta_H(aH+bF)=aH^3+bH^2F=ac+b$ so we get that $\sE$ is $\mu$-stable if and only if
$$H^0(S,\sE(aH+bF))=0$$ when $ac+b\leq 1$. In particular $H^0(S,\sE(F))=0$ and the condition $H^0(S,\sE)=0$ included in the definition of instanton bundle is implied by the $\mu$-semistability.
\end{remark}

In the following lemma we describe the cohomology of an instanton bundle on $S$. Indeed it is known that a general instanton bundle on $\pee3$ has natural cohomology (i.e. at each twist there is only one non-vanishing cohomology group) and this also partially translates to Fano varieties. In the case of rational normal scrolls we are able to retrieve this behaviour at least for some twist of the instanton bundle.

\begin{lemma}
\label{riv2}
Let $\sE$ be an instanton bundle on $S$. Then
\medbreak
\begin{enumerate}
\item[(i)]
$
\begin{sistema}
h^0(S,\sE(bF))=0 \ \text{for $b\leq 1$},\\
h^0(S,\sE(-H+bF))=0 \ \text{for $b\leq a_0+1$},\\
h^0(S,\sE(-2H+bF))=0 \ \text{for $b\leq 2a_0+1$},
\end{sistema}$
\medbreak
\item[(ii)]
$\begin{sistema}
h^3(S,\sE(-2H+bF))=0 \ \text{for $b\geq -1$},\\
h^3(S,\sE(-H+bF))=0 \ \text{for $b\geq -a_0-1$},\\
h^3(S,\sE(bF))=0 \ \text{for $b\geq -2a_0-1$}.
\end{sistema}$
\medbreak
\item[(iii)]
$
\begin{sistema}
h^1(S,\sE(-H+bF))=0 \ \text{for $b\leq 0$},\\
h^2(S,\sE(-H+bF))=0 \ \text{for $b\geq 0$}.
\end{sistema}
$
\medbreak
\item[(iv)] $h^2(S,\sE)=h^2(\sE(-F))=h^1(S,\sE(-2H))=h^1(S,\sE(-2H+F))=0$.
\end{enumerate}
\end{lemma}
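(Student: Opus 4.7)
My plan is to handle the lemma part by part, using Hoppe's criterion (Proposition \ref{hoppe}), Serre duality on $S$ (in the form $h^i(\sE(aH+bF))=h^{3-i}(\sE((-a-2)H-bF))$), Lemma \ref{h1}, and the restriction of $\sE$ to a general fiber $F_0\cong\pp^2$ of the scroll projection $\pi:S\to\pp^1$. By a standard restriction theorem, $\sE|_{F_0}$ will be $\mu$-semistable of rank two with $c_1=-1$ on $\pp^2$, so in particular $h^0(\sE|_{F_0}(t))=0$ for $t\le 0$ and $h^2(\sE|_{F_0}(t))=0$ for $t\ge -2$.

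For (i), I will observe that the vanishings at $a\in\{-1,-2\}$ are weaker than what Hoppe already delivers, since $a_0+1\le c$ and $2a_0+1\le 2c$ imply $ac+b\le 0$ in the stated ranges. At $a=0$, Hoppe handles $b\le 0$, and for $b=1$ I would apply cohomology to the fiber sequence $0\to\sE\to\sE(F)\to\sE|_{F_0}\to 0$ using $h^0(\sE)=h^0(\sE|_{F_0})=0$. Part (ii) then follows as the Serre dual of (i), twist by twist.

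For (iii), the $h^2$ statement is Serre dual to the $h^1$ one, so I will prove the latter by descending induction on $b$, starting from the base case $b=0$ (the instanton condition) and using the fiber sequence
\[
0\to\sE(-H+(b-1)F)\to\sE(-H+bF)\to\sE|_{F_0}(-1)\to 0
\]
together with $h^0(\sE|_{F_0}(-1))=0$ at each step.

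For (iv), the identity $h^1(\sE(-2H))=0$ is the $t=2$ case of Lemma \ref{h1}, and $h^2(\sE)=0$ is its Serre dual. The remaining two vanishings are Serre dual to each other, so it suffices to prove $h^2(\sE(-F))=0$. My plan is to tensor \eqref{eq1} and \eqref{eq2} successively by $\sE(-F)$. From
\[
0\to\Omega_{S|\pp^1}\otimes\sE(-F)\to\bigoplus_i\sE(-H+(a_i-1)F)\to\sE(-F)\to 0,
\]
parts (iii) and (i) respectively will give $h^2(\sE(-H+(a_i-1)F))=0$ (since $a_i\ge 1$) and $h^3(\sE(-H+(a_i-1)F))=h^0(\sE(-H+(1-a_i)F))=0$ (since $1-a_i\le a_0+1$), producing an isomorphism $H^2(\sE(-F))\cong H^3(\Omega_{S|\pp^1}\otimes\sE(-F))$. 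From
\[
0\to\sE(-3H+(c-1)F)\to\bigoplus_i\sE(-2H+(c-a_i-1)F)\to\Omega_{S|\pp^1}\otimes\sE(-F)\to 0,
\]
Hoppe together with part (i) will give $h^3(\sE(-2H+(c-a_i-1)F))=h^0(\sE((a_i+1-c)F))=0$ (since $a_i+1-c\le 1$), hence $H^3(\Omega_{S|\pp^1}\otimes\sE(-F))=0$ and therefore $h^2(\sE(-F))=0$. The main obstacle is exactly this last double-sequence argument in (iv): no restriction-to-fiber approach by itself rules out the cokernel that would otherwise survive, and the tandem use of \eqref{eq1} and \eqref{eq2} is what forces the vanishing.
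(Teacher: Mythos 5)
Your overall architecture (Hoppe's criterion, Serre duality in the form \eqref{sd}, the Euler-type sequences \eqref{eq1}--\eqref{eq2}, and an induction on twists by $F$) matches the paper's, and several pieces are fine or even better than the printed proof: part (ii) by duality is exactly the paper's argument; your direct application of Proposition \ref{hoppe} to the $-H$ and $-2H$ rows of (i) is a legitimate shortcut (the paper instead descends through \eqref{eq2}); and in (iv) your tandem use of \eqref{eq1} and \eqref{eq2} to kill $H^3(\Omega_{S|\PP^1}\otimes\sE(-F))$ actually supplies a step the paper leaves implicit.

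There is, however, a genuine gap: the claim that ``by a standard restriction theorem'' $\sE|_{F_0}$ is $\mu$-semistable on a general fiber $F_0\cong\PP^2$. No restriction theorem applies here. Mehta--Ramanathan/Flenner-type results (and the Maruyama theorem invoked in Lemma \ref{h1}) concern general members of $|mH|$ for $m\gg 0$, i.e.\ general hyperplane sections; the fibers of $\pi$ lie in the class $F$ with $F^2=0$ and form a one-dimensional, highly non-general family. Worse, the statement is simply false for instanton bundles in the sense of Definition \ref{scrollinsta}: the strictly $\mu$-semistable instantons of \eqref{ext}, e.g.\ $0\to\sO_S(-F)\to\sE\to\sO_S(-H+(c-1)F)\to 0$, restrict to $\sO_{\PP^2}\oplus\sO_{\PP^2}(-1)$ on \emph{every} fiber, which is unstable with $h^0\neq 0$ (already on $\PP^1\times\PP^1$, $\sO(1,-1)\oplus\sO(-1,1)$ shows that semistability is destroyed by restriction to fibers). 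This breaks the $b=1$ case of (i) and, more seriously, the inductive step of (iii), which in turn feeds your proof of the last two vanishings in (iv). The paper avoids the fiber entirely: it gets $h^0(\sE(F))=0$ straight from Proposition \ref{hoppe}/Remark \ref{rem}, and proves (iii) by tensoring the pulled-back Koszul sequence \eqref{eq3} by $\sE(-H+bF)$, so that the inductive step only requires $h^0(\sE(-H+bF))=0$ in the range already covered by part (i) (itself a consequence of Hoppe via \eqref{eq2}), together with the full vanishing $h^i(\sE(-H))=0$. If you replace your single-fiber sequence by \eqref{eq3} and source the needed $h^0$ vanishings from Hoppe rather than from the restricted bundle, your argument goes through.
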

\begin{proof}
Since $\sE$ is $\mu$-semistable $H^0(\sE(F))=0$. By sequence (\ref{eq3}) tensored by $\sE(F)$, we get $h^0(S,\sE(bF))=0$ for any $b\leq 1$. By sequence (\ref{eq2}) tensored by $\sE(2H +(-c+a_0+1)F)$, since $h^0(S,\oplus_{i=0}^{2}\sE((c-a_i-c+a_0+1)F))=0$ we get $h^0(S,\sE(-H+(a_0+1)F))=0$. So arguing as above we obtain  $h^0(S,\sE(-H+bF))=0$ for any $b\leq a_0+1$.
By sequence (\ref{eq2}) tensored by $\sE(H +(-c+2a_0+1)F)$, since $h^0(S,\oplus_{i=0}^{2}\sE(-2H+(c-a_i-c+2a_0+1)F))=0$ we get $h^0(S,\sE(-H+(2a_0+1)F))=0$. So arguing as above we obtain  $h^0(S,\sE(-2H+bF))=0$ for any $b\leq 2a_0+1$.
By Serre duality we obtain $(ii)$.

By sequence (\ref{eq3}) tensored by $\sE(-H+F)$, since $h^i(\sE(-H))=0$ for any $i$, we get $h^2(S,\sE(-H+F))=h^1(S,\sE(-H-F))=0$. By sequence (\ref{eq3}) tensored by $\sE(-H+bF)$, we obtain $(iii)$.

By sequence (\ref{eq1}) tensored by $\sE$, since $h^2(S,\oplus_{i=0}^{2}\sE(-H+a_iF))=0$ by $(iii)$, we get $h^2(\sE)=0$.
By sequence (\ref{eq1}) tensored by $\sE(-F)$, since $h^2(S,\oplus_{i=0}^{2}\sE(-H+(a_i-1)F))=0$ by $(iii)$, we get $h^2(S,\sE(-F))=0$. By (\ref{sd}) we obtain also $h^1(S,\sE(-2H))=h^1(S,\sE(-2H+F))=0$ and we have proved $(iv)$.


\end{proof}

\begin{lemma}
\label{riv4}
Let $\sE$ be an instanton bundle on $S$ with $a_2\leq 2$. Then
\begin{enumerate}
\item[(i)] $h^i(\sE\otimes\Omega_{S|\PP^1}(H-F))=h^i(\sE\otimes\Omega_{S|\PP^1}(H-2F))=h^i(\sE\otimes\Omega_{S|\PP^1}(H-(c-2)F))=0$ if $i\not= 1$.
\item[(ii)] $h^0(\sE\otimes\Omega_{S|\PP^1}(H-(c-1)F))=h^3(\sE\otimes\Omega_{S|\PP^1}(H-(c-1)F))=0$.

\end{enumerate}
\end{lemma}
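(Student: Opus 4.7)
The approach is uniform across all four twists: for each $L \in \{H-F,\, H-2F,\, H-(c-2)F,\, H-(c-1)F\}$, tensor the sequences (\ref{eq1}) and (\ref{eq2}) with $\sE(L)$ and deduce the required vanishings from the associated long exact sequences, using Lemma \ref{riv2} as a black box. Concretely, from (\ref{eq1}) we obtain
\begin{equation*}
0 \to \sE\otimes\Omega_{S|\PP^1}(L) \to \bigoplus_{i=0}^2 \sE(L-H+a_iF) \to \sE(L) \to 0,
\end{equation*}
which produces an injection $H^0(\sE\otimes\Omega_{S|\PP^1}(L)) \hookrightarrow \bigoplus_i H^0(\sE(L-H+a_iF))$, while from (\ref{eq2}) we obtain
\begin{equation*}
0 \to \sE(L-3H+cF) \to \bigoplus_{i=0}^2 \sE(L-2H+(c-a_i)F) \to \sE\otimes\Omega_{S|\PP^1}(L) \to 0,
\end{equation*}
which yields a surjection $\bigoplus_i H^3(\sE(L-2H+(c-a_i)F)) \twoheadrightarrow H^3(\sE\otimes\Omega_{S|\PP^1}(L))$ and sandwiches $H^2(\sE\otimes\Omega_{S|\PP^1}(L))$ between $\bigoplus_i H^2(\sE(L-2H+(c-a_i)F))$ and $H^3(\sE(L-3H+cF))$.

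For the $h^0$ vanishing (needed in both (i) and (ii)) the summands $\sE(L-H+a_iF)$ are pure $F$-twists $\sE(bF)$ with $b\leq 1$ in every case (using $a_i\leq a_2\leq 2$ and $c\geq 3$), so Lemma \ref{riv2}(i) applies. For the $h^3$ vanishing the summands $\sE(L-2H+(c-a_i)F)$ are of the form $\sE(-H+bF)$ with $b\geq -a_0-1$ under our numerical hypotheses, so Lemma \ref{riv2}(ii) applies.

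For part (i) only $h^2$ remains: the summands $\sE(-H+bF)$ appearing in the sandwich have exponent $b = c-a_i-1,\ c-a_i-2,\ 2-a_i$ according to whether $L = H-F,\ H-2F,\ H-(c-2)F$, and each of these is $\geq 0$ under $a_2\leq 2$, so Lemma \ref{riv2}(iii) yields $h^2 = 0$; likewise $\sE(L-3H+cF) = \sE(-2H+bF)$ with $b\geq -1$, so Lemma \ref{riv2}(ii) kills the trailing $H^3$-term of the sandwich. For the twist $L = H-(c-1)F$ of part (ii) the analogous exponent $1-a_i$ can equal $-1$, so the $h^2$ argument breaks down; this explains why the statement only asserts $h^0 = h^3 = 0$ in (ii).

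The only nontrivial point is the case-by-case verification that every exponent falls inside the appropriate range of Lemma \ref{riv2}, and this is exactly where the hypothesis $a_2\leq 2$ enters; the remaining bookkeeping is routine.
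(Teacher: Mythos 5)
Your proof is correct and follows essentially the same route as the paper: both use the long exact sequences obtained by tensoring \eqref{eq1} and \eqref{eq2} with the relevant twists of $\sE$ and then invoke Lemma \ref{riv2}, with the hypothesis $a_2\leq 2$ entering exactly where you say it does. The only cosmetic difference is that the paper establishes $h^0$ only at the twist $H-F$ and $h^3$ only at $H-(c-1)F$, deducing the remaining vanishings by twisting by multiples of $F$, whereas you verify each twist directly; both are fine.
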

\begin{proof}

By sequence (\ref{eq1}) tensored by $\sE(H-F)$, since $a_2\leq 2$, $h^0(\oplus_{i=0}^{2}\sE(a_i-1)F))=0$ by Lemma \ref{riv2} $(i)$, we get $h^0(\sE\otimes\Omega_{S|\PP^1}(H-F))=0$. This implies also $h^0(\sE\otimes\Omega_{S|\PP^1}(H-2F))=h^0(\sE\otimes\Omega_{S|\PP^1}(H-(c-2)F))=h^0(\sE\otimes\Omega_{S|\PP^1}(H-(c-1)F))=0$. Let us consider sequence (\ref{eq2}) tensored by $\sE(H-(c-1)F)$. Notice that  $h^3(\oplus_{i=0}^{2}\sE(-H+(c-a_i-c+1)F))=0$ by Lemma \ref{riv2} $(ii)$ since $a_2\leq 2$ implies $-a_2+1\geq -a_0-1$, so we get $h^3(\sE\otimes\Omega_{S|\PP^1}(H-(c-1)F))=0$. This also implies $h^3(\sE\otimes\Omega_{S|\PP^1}(H-F))=h^3(\sE\otimes\Omega_{S|\PP^1}(H-2F))=h^3(\sE\otimes\Omega_{S|\PP^1}(H-(c-2)F))=0$.

Tensoring sequence (\ref{eq2}) by $\sE(H-F)$, since $h^2(\oplus_{i=0}^{2}\sE(-H+(c-1-a_i)F))=0$ by Lemma \ref{riv2} $(iii)$, we get $h^2(\sE\otimes\Omega_{S|\PP^1}(H-F))=0$.By sequence (\ref{eq2}) tensored by $\sE(H-2F)$, since $h^2(\oplus_{i=0}^{2}\sE(-H+(c-2-a_i)F))=0$ by Lemma \ref{riv2} $(iii)$, we get $h^2(\sE\otimes\Omega_{S|\PP^1}(H-2F))=0$.

Finally let us tensor sequence (\ref{eq2})  by $\sE(H-(c-2)F)$. Since $a_2\leq 2$ we have $h^2(\oplus_{i=0}^{2}\sE(-H+(c-c+2-a_i)F))=0$ by Lemma \ref{riv2} $(iii)$, thus $h^2(\sE\otimes\Omega_{S|\PP^1}(H-F))=0$.
\end{proof}

We conclude this section by explicitly writing down the Riemann-Roch formula for a rank two vector bundle $\sE$ on $S$.
\begin{equation}\label{rrgeneral}
\chi(\sE)=2+\frac{1}{6}(c_1(\sE)^3-3c_1(\sE)c_2(\sE))-\frac{1}{4}K_S(c_1(\sE)^2-2c_2(\sE))+\frac{1}{12}c_1(\sE)(K_S^2+\Omega)
\end{equation}
where $K_S=-c_1(\sT_S)=-3H+(c-2)F$ and $\Omega=c_2(\sT_S)=3H^2+(6-2c)$. Indeed, we can compute $K_S$ and $\Omega$ using the short exact sequence
\begin{equation}\label{sesomega}
0\to \sO_S(-2F)\to \Omega_S\to \Omega_{S|{\mathbb{P}^1}}\to 0.
\end{equation}

 In particular, computing the Euler characteristic of an instanton bundle we obtain the following lemma:
\begin{lemma}
Let $\sE$ be an instanton bundle on $S$, then
\begin{equation}\label{rrinsta}
\chi(\sE)=1-Hc_2(\sE).
\end{equation}
\end{lemma}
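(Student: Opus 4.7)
The plan is to specialize the general Riemann--Roch formula \eqref{rrgeneral} to the case $c_1(\sE) = -H + (c-2)F$ and then collect terms using the intersection numbers $H^3 = c$, $H^2F = 1$, $F^2 = 0$ (and hence also $HF^2 = 0$). Every quantity appearing in \eqref{rrgeneral} is then a polynomial expression in $c$, in the coefficients of $c_2(\sE) = k_1 H^2 + k_2 HF$, and in the coefficients of $K_S = -3H + (c-2)F$ and $\Omega = 3H^2 + (6-2c)HF$; each can be read off by a direct intersection-theoretic computation.

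Concretely, I would first compute the $c_1$-only pieces:
\[
c_1^2 = H^2 - 2(c-2)HF, \quad c_1^3 = 2c - 6, \quad c_1 K_S^2 = 6c - 30, \quad c_1\Omega = 2c - 12,
\]
so that $\tfrac{1}{12}c_1(K_S^2 + \Omega) = \tfrac{2c}{3} - \tfrac{7}{2}$. Similarly $K_S c_1^2 = 4c - 14$. Next I would compute the pieces involving $c_2$, observing that the only combination that survives intersection with $H$ or $F$ is controlled by
\[
Hc_2(\sE) = k_1 c + k_2, \qquad F c_2(\sE) = k_1,
\]
which yields $c_1 c_2 = -2k_1 - k_2$ and $K_S c_2 = -2k_1 c - 2k_1 - 3k_2$.

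Substituting into \eqref{rrgeneral} gives
\[
\chi(\sE) = 2 + \tfrac{1}{6}\bigl(2c - 6 + 6k_1 + 3k_2\bigr) - \tfrac{1}{4}\bigl(4c - 14 + 4k_1 c + 4k_1 + 6k_2\bigr) + \tfrac{1}{12}(8c - 42).
\]
Collecting the constant, $c$-, $k_1$-, $k_2$- and $k_1c$-terms separately, the $c$, $k_1$ and half-integer constants cancel pairwise, leaving $\chi(\sE) = 1 - k_1 c - k_2 = 1 - Hc_2(\sE)$, as claimed.

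The computation is entirely mechanical, so there is no conceptual obstacle; the only thing to watch is bookkeeping, in particular keeping track of the sign of $K_S$ in the Todd class contribution and not mistakenly dropping the $HF^2 = 0$ terms that appear when expanding products like $c_1 K_S^2$ and $c_1 \Omega$. A useful sanity check is that the coefficient of $c$ in $\chi(\sE)$ must vanish (since the right-hand side $1 - Hc_2(\sE) = 1 - k_1 c - k_2$ has its $c$-dependence only through the product $k_1 c$), and indeed $\tfrac{c}{3} - c + \tfrac{2c}{3} = 0$.
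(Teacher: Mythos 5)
Your computation is correct and is exactly the argument the paper intends: the lemma is stated as an immediate consequence of the Riemann--Roch formula \eqref{rrgeneral}, and your substitution of $c_1(\sE)=-H+(c-2)F$, $c_2(\sE)=k_1H^2+k_2HF$ together with the intersection relations $H^3=c$, $H^2F=1$, $F^2=0$ reproduces $\chi(\sE)=1-ck_1-k_2=1-Hc_2(\sE)$ with all intermediate intersection numbers checking out. No gaps.
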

Now, given a divisor $D$ and an instanton $\sE$ we recall the formula for $\chi(\sE(D))$:
\begin{equation}\label{rrinstatwisted}
\chi(\sE(D))=1-c_2(\sE)(D+H)+\frac{1}{6}D(2D^2+K_S^2+\Omega)+HD(K_S+2H+D).
\end{equation}

We will often use
\begin{equation}\label{0H}
\chi(\sE(bF))=-\chi(\sE(-2H-bF)=1-(c+b)k_1-k_2+b,
\end{equation}

\begin{equation}\label{+H}
\chi(\sE(H+bF))=2-(b+2c)k_1-2k_2+2c+4b.
\end{equation} and
\begin{equation}\label{-H}
\chi(\sE(-H+bF))=-k_1b.
\end{equation}
By (\ref{eq1}) tensored by $\sE(H-F)$ we get
\begin{equation}\label{Omeg1}
\chi(\sE\otimes\Omega_{S|\PP^1}(H-F))=\sum_{i=0}^2\chi(\sE((a_i-1)F))-\chi(\sE(H-F))=2-2k_1(c-1)-k_2-c
\end{equation}
By (\ref{eq1}) tensored by $\sE(H-2F)$ we get

\begin{equation}\label{Omeg2}
\chi(\sE\otimes\Omega_{S|\PP^1}(H-2F))=\sum_{i=0}^2\chi(\sE((a_i-2)F))-\chi(\sE(H-2F))=3-k_2-k_1(2c-4)-k_2-c
\end{equation}
By (\ref{eq2}) tensored by $\sE(H-(c-2)F)$ we get

\begin{align}\label{Omeg3}
\chi(\sE\otimes\Omega_{S|\PP^1}(H+bF))& =\sum_{i=0}^2\chi(\sE((-H+(c-a_i+b)F))-\chi(\sE(-2H+(c+b)F)) \notag \\
&=\sum_{i=0}^2\chi(\sE((-H+(c-a_i+b)F))+\chi(\sE(-(c+b)F)) \notag \\
&=1-b-c-(2c+2b)k_1-k_2. \notag
\end{align}

 \section{Monads on three-dimensional rational normal scrolls}

We will use the following version (explained in \cite[Theorem 2.5]{AHMP}) of the Beilinson spectral sequence (see also  \cite[Corollary 3.3.2]{RU}, \cite[Section 2.7.3]{GO} and  \cite[Theorem 2.1.14]{BO}.) Let us briefly recall the following definitions.

Let $D^b(X)$ be the the bounded derived category of coherent sheaves on a smooth projective variety $X$.
\begin{itemize}
\item An object $E \in D^b(X)$ is called \emph{exceptional} if $\Ext^\bullet(E,E) = \mathbb C$.
\item A set of exceptional objects $\langle E_1, \ldots, E_n\rangle$ on  $X$ is called an \emph{exceptional collection} if $\Ext^\bullet(E_i,E_j) = 0$ for $i > j$.
\item An exceptional collection is \emph{full} when $\Ext^\bullet(E_i,A) = 0$ for all $i$ implies $A = 0$, or equivalently when $\Ext^\bullet(A, E_i) = 0$ for all $i$ also implies $A = 0$.
\item A full exceptional collection $\langle E_1, \ldots, E_n\rangle$ is \emph{strong} if $\Ext^k(E_i,E_j) = 0$ for $i<j$ and $k>0$.
\end{itemize}

\begin{theorem}\label{use}
Let $X$ be a smooth projective variety with a full exceptional collection
$\langle E_0, \ldots, E_n\rangle$
where $E_i=\mathcal E_i^*[-k_i]$ with each $\mathcal E_i$ a vector bundle and $(k_0, \ldots, k_n)\in \mathbb Z^{\oplus n+1}$ such that there exists a sequence $\langle F_n=\mathcal F_n, \ldots, F_0=\mathcal F_0\rangle$ of vector bundles satisfying
\begin{equation}\label{order}
\mathrm{\Ext}^k(E_i,F_j)=H^{k+k_i}(X, \mathcal E_i\otimes \mathcal F_j) =  \left\{
\begin{array}{cc}
\mathbb C & \textrm{\quad if $i=j=k$} \\
0 & \textrm{\quad otherwise}
\end{array}
\right.
\end{equation}

Then for any coherent sheaf $A$ on $X$ there is a spectral sequence in the square $-n\leq p\leq 0$, $0\leq q\leq n$  with the $E_1$-term
\[
E_1^{p,q} = \mathrm{\Ext}^{q}(E_{-p},A) \otimes F_{-p}=
H^{q+k_{-p}}(X,\mathcal E_{-p}\otimes A) \otimes \mathcal F_{-p}
\]
which is functorial in $A$ and converges to
\begin{equation}
E_{\infty}^{p,q}= \left\{
\begin{array}{cc}
A & \textrm{\quad if $p+q=0$} \\
0 & \textrm{\quad otherwise.}
\end{array}
\right.
\end{equation}
\end{theorem}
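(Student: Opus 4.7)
The plan is to derive the spectral sequence from a Beilinson-type resolution of the diagonal on $X\times X$. The orthogonality condition (\ref{order}) is precisely the statement that, up to appropriate shifts, $\{F_i\}$ is the right dual of the full exceptional collection $\{E_i\}$ in the sense of Bondal: the pairing $R\mathrm{Hom}(E_i,F_j)$ is one-dimensional in a single cohomological degree when $i=j$ and vanishes otherwise. Fullness of $\{E_i\}$ ensures that its dual collection generates $D^b(X)$ as well.

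First I would invoke the classical principle (cf.\ \cite[Corollary~3.3.2]{RU}, \cite[\S2.7.3]{GO}, \cite[Theorem 2.1.14]{BO}) that, for any such pair of mutually dual full exceptional collections, the structure sheaf of the diagonal $\Delta\subset X\times X$ admits a Postnikov decomposition in $D^b(X\times X)$ whose graded pieces are the external products $E_i^\vee \boxtimes F_i$, placed in cohomological positions controlled by the shifts $k_i$. Concretely, $\sO_\Delta$ is obtained as an iterated cone (a convolution) of the collection $\{E_i^\vee\boxtimes F_i\}_{i=0,\ldots,n}$ on $X\times X$.

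Next, given a coherent sheaf $A$ on $X$, I would convolve this resolution with $p_1^*A$ and push forward along $p_2$, where $p_1,p_2:X\times X \to X$ are the projections. Starting from the tautological identity $A\simeq Rp_{2*}(p_1^*A\otimes^{L}\sO_\Delta)$, the projection formula together with K\"unneth yields that each graded piece contributes
\[
Rp_{2*}\bigl(p_1^*A \otimes (E_i^\vee \boxtimes F_i)\bigr) \simeq R\mathrm{Hom}(E_i,A)\otimes F_i,
\]
whose $q$-th cohomology is $\mathrm{Ext}^q(E_i,A)\otimes F_i = H^{q+k_i}(X,\mathcal{E}_i\otimes A)\otimes \mathcal{F}_i$, exactly the terms appearing in the stated $E_1$-page. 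The hypercohomology spectral sequence of the resulting double complex then assembles to
\[
E_1^{p,q}=\mathrm{Ext}^q(E_{-p},A)\otimes F_{-p}
\]
supported in the square $-n\le p\le 0$, $0\le q\le n$ and converging to $A$ concentrated in total degree $0$, since $Rp_{2*}(p_1^*A\otimes^{L}\sO_\Delta)=A$.

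The main obstacle is the precise construction of the Postnikov resolution of $\sO_\Delta$ with the correct placement of the shifts $k_i$: one must verify that the iterated cone lives in the prescribed quadrant and recovers $\sO_\Delta$ on the nose, with differentials compatible with those of the dual collection. This delicate bookkeeping is exactly what is carried out in the references cited in the statement, so my plan would be to quote them for the diagonal resolution and derive the rest by the formal argument above.
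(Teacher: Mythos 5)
Your sketch is correct and is essentially the argument behind the result as the paper presents it: the paper gives no proof of its own but simply cites \cite[Theorem 2.5]{AHMP}, \cite[Corollary 3.3.2]{RU}, \cite[Section 2.7.3]{GO} and \cite[Theorem 2.1.14]{BO}, and the resolution-of-the-diagonal/Postnikov-convolution argument you outline (followed by convolving with $p_1^*A$, applying the projection formula and K\"unneth, and taking the hypercohomology spectral sequence) is precisely what is carried out in those references. Your decision to quote them for the construction of the convolution of $\{E_i^\vee\boxtimes F_i\}$ recovering $\sO_\Delta$, including the placement of the shifts $k_i$, matches the paper's own treatment.
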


\begin{remark}\label{rembeil}
It is possible to state a stronger version of the Beilinson's theorem (see \cite{Be} for $\mathbb{P}^N$ and \cite{AO3} for the projectivized of a direct sum of line bundles over $\mathbb{P}^N$). Let us consider a rational normal scroll $S$ and let $A$ be a coherent sheaf on $X$. Let $\langle E_0,\dots,E_n\rangle$ be a full exceptional collection and $\langle F_n,\dots,F_0 \rangle$ its right dual collection. Using the notation of Theorem \ref{use}, if $\langle F_n,\dots,F_0\rangle$ is a strong exceptional collection of vector bundles, then there exists a complex $L^\bullet$ such that
\begin{enumerate}
\item $H^k(X,L^\bullet)=
\begin{cases}
A \ & \text{if $k=0$},\\
0 \ & \text{otherwise}.
\end{cases}$
\item $L^k=\underset{k=p+q}{\bigoplus}H^{q+k_{-p}}(X,A\otimes \sE_{-p})\otimes \sF_{-p}$ with $0\le q \le n$ and $-n\le p \le 0$.
\end{enumerate}
\end{remark}

\begin{definition}\label{def:mutation}
Let $E$ be an exceptional object in $D^b(X)$.
Then there are functors $\mathbb L_{E}$ and $\mathbb R_{E}$ fitting in distinguished triangles
$$
\mathbb L_{E}(T) 		\to	 \Ext^\bullet(E,T) \otimes E 	\to	 T 		 \to	 \mathbb L_{E}(T)[1]
$$
$$
\mathbb R_{E}(T)[-1]	 \to 	 T 		 \to	 \Ext^\bullet(T,E)^* \otimes E	 \to	 \mathbb R_{E}(T)	
$$
The functors $\mathbb L_{E}$ and $\mathbb R_{E}$ are called respectively the \emph{left} and \emph{right mutation functor}.
\end{definition}

Now we construct the full exceptional collections that we will use in the next theorems. Let us consider the full exceptional collection $\langle E_7, \ldots, E_0\rangle$ given in \cite{AHMP} Example 3.6:

\begin{gather}\label{col}
\{ \Oo_S(-2H)[-2], \Oo_S(-2H+F)[-2], \Oo_S(-H)[-1], \\
\Oo_S(-H+F)[-1], \Oo_S(-F), \Oo_S \} \notag
\end{gather}

The associated full exceptional collection $\langle F_7=\mathcal F_7, \ldots, F_0=\mathcal F_0\rangle$ of Theorem \ref{use} is

\begin{gather}\label{cold}
\{ \Oo_S(-H+(c-2)F), \Oo_S(-H+(c-1)F), \Omega_{S|\PP^1}^{1}(H-2F), \\
\Omega_{S|\PP^1}^{1}(H-F), \Oo_S(-F), \Oo_S \} \notag
\end{gather}

If we tensor \eqref{col} by $\sO((c-3)F)$ we obtain

\begin{gather}\label{col0}
\{ \Oo_S(-2H+(c-3)F)[-2], \Oo_S(-2H+(c-2)F)[-2], \Oo_S(-H+(c-3)F)[-1], \\
\Oo_S(-H+(c-2)F)[-1], \Oo_S((c-4)F), \Oo_S((c-3)F) \} \notag
\end{gather}

Thanks to a right mutation of the pair $\{\Oo_S(-2H+(c-3)F), \Oo_S(-2H+(c-2)F)\}$ we get:

\begin{gather}\label{col00}
\{ \Oo_S(-2H+(c-2)F)[-2], \Oo_S(-2H+(c-1)F)[-2], \Oo_S(-H+(c-3)F)[-1], \\
\Oo_S(-H+(c-2)F)[-1], \Oo_S((c-4)F), \Oo_S((c-3)F) \} \notag
\end{gather}
The associated left mutation is

\begin{gather}\label{cold0}
\{ \Oo_S(-H), \Oo_S(-H+F), \Omega_{S|\PP^1}^{1}(H-(c-1)F), \\
\Omega_{S|\PP^1}^{1}(H-(c-2)F), \Oo_S(-(c-2)F), \Oo_S(-(c-3)F) \} \notag
\end{gather}

From \eqref{col} with $t\geq 0$ right mutations of the pair  $\{\Oo_S(-F), \Oo_S\}$ and a right mutation of the pair  $\{\Oo_S(-2H), \Oo_S(-2H+F)\}$ we obtain:

\begin{gather}\label{colt}
\{ \Oo_S(-2H+F)[-2], \Oo_S(-2H+2F)[-2], \Oo_S(-H)[-1], \\
\Oo_S(-H+F)[-1], \Oo_S((t-1)F), \Oo_S(tF) \} \notag
\end{gather}
for any $t\geq 0$.\\
The associated left mutations of the pair $\{ \Oo_S(-F), \Oo_S \}$ and  $\{\Oo_S(-H+(c-2)F), \Oo_S(-H+(c-1)F)\}$ in \eqref{cold} give

\begin{gather}\label{coltd}
\{ \Oo_S(-H+(c-3)F), \Oo_S(-H+(c-2)F), \Omega_{S|\PP^1}^{1}(H-2F), \\
\Omega_{S|\PP^1}^{1}(H-F), \Oo_S(-(t+1)F), \Oo_S(-tF) \} \notag
\end{gather}
for any $t\geq 0$.\\

From \eqref{colt} with a right mutation of the pair  $\{\Oo_S(-2H)[-2], \Oo_S(-2H+F)[-2]\}$ we obtain:

\begin{gather}\label{colt2}
\{ \Oo_S(-2H+F)[-2], \Oo_S(-2H+2F)[-2], \Oo_S(-H)[-1], \\
\Oo_S(-H+F)[-1], \Oo_S((t-1)F), \Oo_S(tF) \} \notag
\end{gather}
for any $t\geq 0$.

The associated left mutation of the pair $\{ \Oo_S(-H+(c-2)F), \Oo_S(-H+(c-1)F) \}$ in \eqref{cold} gives

\begin{gather}\label{coltd2}
\{ \Oo_S(-H+(c-3)F), \Oo_S(-H+(c-2)F), \Omega_{S|\PP^1}^{1}(H-2F), \\
\Omega_{S|\PP^1}^{1}(H-F), \Oo_S(-(t+1)F), \Oo_S(-tF) \} \notag
\end{gather}
for any $t\geq 0$.

If we tensor (\ref{colt2}) by $\Oo_S((c-3)F)$ and \eqref{coltd2} by $\Oo_S(-(c-3)F)$ we obtain:
\begin{gather}\label{col*}
\{ \Oo_S(-2H+(c-2)F)[-2], \Oo_S(-2H+(c-1)F)[-2], \Oo_S(-H+(c-3)F)[-1], \\
\Oo_S(-H+(c-2)F)[-1], \Oo_S((c-4+t)F), \Oo_S((c-3+t)F) \} \notag
\end{gather}
and
\begin{gather}\label{cold*}
\{ \Oo_S(-H), \Oo_S(-H+F), \Omega_{S|\PP^1}^{1}(H-(c-1)F), \\
\Omega_{S|\PP^1}^{1}(H-(c-2)F), \Oo_S(-(c-2+t)F), \Oo_S(-(c-3+t)F) \} \notag
\end{gather}

From (\ref{col}) with $c-2$ right mutations of the pair  $\{\Oo_S(-2H)[-2], \Oo_S(-2H+F)[-2]\}$ we obtain:

\begin{gather}\label{col3}
\{ \Oo_S(-2H+(c-2)F)[-2], \Oo_S(-2H+(c-1)F)[-2], \Oo_S(-H)[-1], \\
\Oo_S(-H+F)[-1], \Oo_S(-F), \Oo_S \} \notag
\end{gather}

The associated left mutations of the pair $\{ \Oo_S(-H+(c-2)F), \Oo_S(-H+(c-1)F) \}$ in \eqref{cold} give
\begin{gather}\label{cold3}
\{ \Oo_S(-H), \Oo_S(-H+F), \Omega_{S|\PP^1}^{1}(H-2F), \\
\Omega_{S|\PP^1}^{1}(H-F), \Oo_S(-F), \Oo_S \} \notag
\end{gather}

From \eqref{col3} with $c-2$  left mutations of the pair  $\{\Oo_S(-F), \Oo_S\}$ we obtain:

\begin{gather}\label{col4}
\{ \Oo_S(-2H+(c-2)F)[-2], \Oo_S(-2H+(c-1)F)[-2], \Oo_S(-H)[-1], \\
\Oo_S(-H+F)[-1], \Oo_S(-(c-2)F), \Oo_S(-(c-3)F) \} \notag
\end{gather}

The associated right mutations of the pair $\{ \Oo_S(-F), \Oo_S \}$ in (\ref{cold3}) give

\begin{gather}\label{cold4}
\{ \Oo_S(-H), \Oo_S(-H+F), \Omega_{S|\PP^1}^{1}(H-2F), \\
\Omega_{S|\PP^1}^{1}(H-F), \Oo_S((c-4)F), \Oo_S((c-3)F) \} \notag
\end{gather}
\begin{remark}
The full exceptional collections (\ref{colt}), (\ref{colt2}), (\ref{col*}) and (\ref{cold4}) are strong.
\end{remark}
In what follows we give some monadic description of instanton bundles on $S$. At first we will consider the rational normal scrolls $S(a_0,a_1,a_2)$ with $a_2 \leq 2$ and then we will deal with the general case.

\begin{theorem}\label{mon1}
Let $a_2\leq 2$ and let $\sE$ be an instanton bundle on $S(a_0,a_1,a_2)$ with $c_2(\sE)=k_1H^2+k_2HF$. Then $\sE$ is the homology of a monad $$0 \to \sA  \xrightarrow{\alpha}  \sB
\xrightarrow{\beta} \sC \to 0,$$ where $\sA$ arises from the short exact sequence
$$0\longrightarrow\Oo_S(-2H+F)^{\oplus (c-3)k_1}\longrightarrow
\begin{matrix}
\Oo_S(-2H+2F)^{\oplus (c-2)k_1}\\
\oplus \\
\Oo_S(-H)^{\oplus (2c-4)k_1+k_2+c-3}
\end{matrix}
\longrightarrow \sA\longrightarrow0,$$
and the other terms of the monad are given by
$$\sB=\Oo_S(-H+F)^{\oplus (2c-2)k_1+k_2+c-2}\oplus\Oo_S(-F)^{\oplus (c-1)k_1+k_2}$$ and $$\sC=\sO_S^{\oplus ck_1+k_2-1}.$$
\end{theorem}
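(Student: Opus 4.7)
The strategy is to apply the Beilinson-type spectral sequence of Theorem \ref{use}, in the strengthened form of Remark \ref{rembeil}, to the instanton bundle $\sE$ using the strong full exceptional collection \eqref{colt2} at $t=0$, whose underlying line bundles are precisely the six summands appearing in the stated monad, together with its right dual \eqref{coltd2}. The hypothesis $a_2\leq 2$ enters the argument in exactly one place, namely it is the assumption that makes Lemma \ref{riv4} available, so that the cohomology of $\sE$ twisted by the $\Omega_{S|\PP^1}$-bundles of \eqref{coltd2} concentrates in cohomological degree one.

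The first step is to show that only the $H^1$-row of the $E_1$-page survives. For the line-bundle columns this follows from the instantonic vanishing $h^i(\sE(-H))=0$ together with Lemma \ref{riv2}(iii)-(iv); for the $\Omega_{S|\PP^1}$-columns it is exactly Lemma \ref{riv4}. Taking into account the shifts $k_{-p}\in\{0,1,2\}$ inherent to the elements of \eqref{colt2}, the surviving entries distribute into three cohomological positions of the total complex $L^{\bullet}$, producing a three-position complex whose terms are direct sums of the six line bundles $\Oo_S(-2H+F),\ \Oo_S(-2H+2F),\ \Oo_S(-H),\ \Oo_S(-H+F),\ \Oo_S(-F),\ \Oo_S$. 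The second step is to compute the multiplicities: since every surviving entry is an $H^1$, its dimension equals $-\chi(\sE\otimes\sG)$ for the corresponding twist $\sG$, and a direct application of the Riemann-Roch formulas \eqref{0H}, \eqref{+H}, \eqref{-H}, \eqref{Omeg1}, \eqref{Omeg2}, \eqref{Omeg3} yields exactly the six integers $(c-3)k_1$, $(c-2)k_1$, $(2c-4)k_1+k_2+c-3$, $(2c-2)k_1+k_2+c-2$, $(c-1)k_1+k_2$, $ck_1+k_2-1$ appearing in the statement.

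Finally, the three-position complex is repackaged as a monad: $\sB$ and $\sC$ are the direct sums occupying the middle and right positions, while $\sA$ is defined by the short exact sequence in the statement, the map from $\Oo_S(-2H+F)^{\oplus(c-3)k_1}$ to $\Oo_S(-2H+2F)^{\oplus(c-2)k_1}\oplus\Oo_S(-H)^{\oplus(2c-4)k_1+k_2+c-3}$ being the internal differential of the Beilinson complex between its two leftmost positions. Injectivity of $\sA\to\sB$ and surjectivity of $\sB\to\sC$ follow from the fact that the spectral sequence converges to $\sE$ concentrated in cohomological degree zero. The main obstacle is the bookkeeping required in the first step: one has to verify that no higher differentials $d_r$ with $r\geq 2$ spoil the identification, which is a shape argument relying on the sparsity produced by Lemmas \ref{riv2} and \ref{riv4}, and one has to match each surviving entry with the correct column so that the partition of the six line bundles into the three pieces $\sA,\sB,\sC$ is the one claimed.
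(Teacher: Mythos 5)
Your proposal is correct and follows essentially the same route as the paper: the authors run the Beilinson spectral sequence of Theorem \ref{use} with the collections \eqref{colt}/\eqref{coltd} at $t=0$ (which coincide with your \eqref{colt2}/\eqref{coltd2} at $t=0$), kill everything outside the $H^1$-entries using Lemma \ref{riv2} (all four parts, not only (iii)--(iv)) and Lemma \ref{riv4} (the latter being exactly where $a_2\le 2$ enters, as you say), and read off the multiplicities as $-\chi$ via the Riemann--Roch formulas. The only slip is calling the resulting complex ``three-position'': the shifts $k_{-p}$ spread the six surviving summands over four consecutive positions, distributed $1+2+2+1$, which is in fact what your final paragraph describes and what yields the stated two-step presentation of $\sA$.
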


\begin{proof}
 We consider the Beilinson type spectral sequence associated to an instanton bundle $\sE$ and identify the members of the graded sheaf associated to the induced filtration as the sheaves mentioned in the statement of Theorem \ref{use}.
We consider the full exceptional collection $\langle E_7, \ldots, E_0\rangle$  given in (\ref{colt}) and the full exceptional collection $\langle F_7, \ldots, F_0\rangle$ given in (\ref{coltd}) with $t=0$.
 So we get the following table

{\small
  \begin{center}\begin{tabular}{|c|c|c|c|c|c|}
\hline
$\Oo_S(-2H+F)$& $\Oo_S(-2H+2F)$& $\Oo_S(-H)$&$\Oo_S(-H+F)$& $\Oo_S(-F)$ & $\Oo_S$\\
\hline
\hline
$H^3$		&	 $H^3$	& $*$		& $*$			& $*$			& $*$	\\
$H^2$		&$ H^2$		 	& $H^3$	& $H^3$	& $*$			& $*$	\\
		$H^1$		& $H^1$	&	$H^2$	& $H^2$		& $H^3$		& $H^3$	\\
	$H^0$&  $H^0$&	$H^1$	& $H^1$		& $H^2$		& $H^2$	\\
$*$		&	 $*$  		& $H^0$		& $H^0$	&	 $H^1$		& $H^1$	\\
$*$		&	 $*$		&	$*$	& $*$		& $H^0$	&	 $H^0$ \\
\hline
\hline
$\Oo_S(-H+(c-3)F)$  & $\Oo_S(-H+(c-2)F)$  & $\Omega(H-2F)$ & $\Omega(H-F)$ & $\Oo_S(-F)$ &  $\Oo_S$\\
\hline
\end{tabular}
\end{center}
}
Each entry of the table denotes the dimension of $H^i(\sE\otimes \sB)$ where $\sB$ is the correspondent bundle in the last row. In particular $\ast$ denotes the terms that are trivially zero.

By Lemma \ref{riv2}, Lemma \ref{riv4} and Riemann-Roch formula we have:
\begin{itemize}
\item $\chi(\sE)=-h^1(S,\sE)=1-ck_1-k_2$.
\item $\chi(\sE(-F))=-h^1(S,\sE(-F))=-(c-1)k_1-k_2$.
\item $\chi(\sE\otimes\Omega_{S|\PP^1}(H-F))=-h^1(S,\sE\otimes\Omega_{S|\PP^1}(H-F))=2-c-(2c-2)k_1-k_2.$
\item $\chi(\sE\otimes\Omega_{S|\PP^1}(H-2F))=-h^1(S,\sE\otimes\Omega_{S|\PP^1}(H-2F))=3-c-(2c-4)k_1-k_2.$
\item $\chi(\sE(-H+(c-2)F))=-h^1(S,\sE(-H+(c-2)F))=-(c-2)k_1$.
\item $\chi(\sE(-H+(c-3)F))=-h^1(S,\sE(-H+(c-3)F))=-(c-3)k_1$.
\end{itemize}
So we obtain the following table
{\footnotesize
  \begin{center}
\addtolength{\leftskip}{-2cm}
\addtolength{\rightskip}{-2cm}
\begin{tabular}{|c|c|c|c|c|c|}
\hline
$\Oo_S(-2H+F)$& $\Oo_S(-2H+2F)$& $\Oo_S(-H)$&$\Oo_S(-H+F)$& $\Oo_S(-F)$ & $\Oo_S$\\
\hline
\hline
$0$		&	 $0$	& $*$		& $*$			& $*$			& $*$	\\
$0$		&$ 0$		 	& $0$	& $0$	& $*$			& $*$	\\
		$(c-3)k_1$		& $(c-2)k_1$	&	$0$	& $0$		& $0$		& $0$	\\
	$0$&  $0$&	$c+(2c-4)k_1+k_2-3$	& $c+(2c-2)k_1+k_2-2$		& $0$		& $0$	\\
$*$		&	 $*$  		& $0$		& $0$	&	 $(c-1)k_1+k_2$		& $ck_1+k_2-1$	\\
$*$		&	 $*$		&	$*$	& $*$		& $0$	&	 $0$ \\
\hline
\hline
$\Oo_S(-H+(c-3)F)$  & $\Oo_S(-H+(c-2)F)$  & $\Omega(H-2F)$ & $\Omega(H-F)$ & $\Oo_S(-F)$ &  $\Oo_S$\\
\hline
\end{tabular}
\end{center}
}
So we obtain the claimed monad.
\end{proof}
Now we construct a second type of monad for an instanton bundle.
\begin{theorem}\label{mon2}
Let $a_2\leq 2$ and let $\sE$ be an instanton bundle on $S(a_0,a_1,a_2)$ with $c_2(\sE)=k_1H^2+k_2HF$. Let us write $\alpha=h^2(S,\sE(-(c-3)F)$,  $\beta=h^2(\sE(-(c-2)F)$, $\delta=h^2(S,\sE\otimes\Omega(H-(c-1)F)$.  Then $\sE$ is the homology of a monad

$$0 \to \sA  \xrightarrow{\alpha}  \sB
\xrightarrow{\beta} \sC \to 0,$$ where $$\sA=\sO(-2H+(c-1)F)^{\oplus k_1}\oplus\sO(-H+(c-3)F)^{\oplus 2k_1+k_2+\delta},$$
\[
\sB=\Oo_S(-H+(c-3)F)^{\oplus \delta}\oplus\Oo_S(-H+(c-2)F)^{\oplus 4k_1+k_2+1}\oplus \sO((c-4)F)^{\oplus 2k_1+k_2+c-3+\beta}
\]
and $\sC$ arises from the short exact sequence
\[
0\longrightarrow \sC\longrightarrow \begin{matrix}

\sO((c-4)F)^{\oplus \beta}\oplus\sO_S((c-3)F)^{\oplus 3k_1+k_2+c-2+\alpha}
\end{matrix}
\longrightarrow \sO_S((c-3)F)^{\oplus \alpha}\longrightarrow0.
\]
\end{theorem}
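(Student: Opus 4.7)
The plan is to mimic the proof of Theorem \ref{mon1}, replacing the pair of exceptional collections (\ref{colt}), (\ref{coltd}) (with $t=0$) by (\ref{col*}), (\ref{cold*}) (also with $t=0$), and applying Theorem \ref{use} to the instanton bundle $A=\sE$. The resulting Beilinson-type spectral sequence has $E_1$-page with columns indexed by the bundles of (\ref{col*}) (which then form the building blocks of the monad) and with entries in column $-p$ given by $\Ext^q(E_{-p},\sE)$, where $E_{-p}$ runs over the dual bundles in (\ref{cold*}).

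The first step is the computation of the cohomology groups $h^i(\sE\otimes \sG)$ for each $\sG$ appearing in (\ref{cold*}). For $\sG=\sO_S(-H)$, all $h^i(\sE(-H))$ vanish by the instantonic condition together with Lemma \ref{riv2} and Serre duality. For $\sG=\sO_S(-H+F)$ only $h^1=k_1$ survives, by Lemma \ref{riv2}(i), (iii) and Riemann--Roch (\ref{-H}). For $\sG=\Omega_{S|\PP^1}(H-(c-2)F)$, Lemma \ref{riv4}(i) leaves only $h^1=4k_1+k_2+1$ via (\ref{Omeg3}); for $\sG=\Omega_{S|\PP^1}(H-(c-1)F)$, Lemma \ref{riv4}(ii) gives $h^0=h^3=0$, so I set $\delta:=h^2$ and obtain $h^1=2k_1+k_2+\delta$ from (\ref{Omeg3}). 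Finally, for $\sG=\sO_S(-(c-2)F)$ and $\sO_S(-(c-3)F)$, the hypothesis $a_2\leq 2$ ensures that $c-2\leq 2a_0+1$, so Lemma \ref{riv2}(i) applied directly and after Serre duality yields $h^0=h^3=0$; I name $\beta:=h^2(\sE(-(c-2)F))$ and $\alpha:=h^2(\sE(-(c-3)F))$, and read off the corresponding $h^1$'s from (\ref{0H}).

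With these dimensions in hand, I would assemble the $E_1$-table exactly as in Theorem \ref{mon1} and then extract the monad: the $[-2]$-shifted entries of (\ref{col*}) produce $\sA$ as the direct sum $\sO_S(-2H+(c-1)F)^{\oplus k_1}\oplus \sO_S(-H+(c-3)F)^{\oplus 2k_1+k_2+\delta}$; the $[-1]$-shifted entries produce $\sB$; and the unshifted entries $\sO_S((c-4)F),\sO_S((c-3)F)$, together with the two $h^2$-contributions of dimensions $\beta$ and $\alpha$, force $\sC$ to sit as the kernel of a surjection $\sO_S((c-4)F)^{\oplus\beta}\oplus \sO_S((c-3)F)^{\oplus 3k_1+k_2+c-2+\alpha}\to \sO_S((c-3)F)^{\oplus\alpha}$.

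The main obstacle will be identifying the precise form of this last surjection: it realizes a higher differential $d_r$ of the Beilinson spectral sequence, and the proof requires verifying that $d_r$ has rank exactly $\alpha$ so that the cokernel has the correct size. Once this differential is pinned down, the remaining pieces of the monad assemble in the same routine way as in the proof of Theorem \ref{mon1}.
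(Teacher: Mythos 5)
Your proposal is correct and follows essentially the same route as the paper: the collections (\ref{col*}) and (\ref{cold*}) at $t=0$ are literally the collections (\ref{col00}) and (\ref{cold0}) that the paper feeds into Theorem \ref{use}, and your cohomology computations via Lemma \ref{riv2}, Lemma \ref{riv4} and Riemann--Roch reproduce the paper's $E_1$-table (including the observation that $h^2(\sE\otimes\Omega_{S|\PP^1}(H-(c-2)F))=0$, which the paper carries as a variable $\gamma$ in its table but implicitly sets to zero in the statement).
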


\begin{proof}
 We consider the Beilinson type spectral sequence associated to an instanton bundle $E$ and identify the members of the graded sheaf associated to the induced filtration as the sheaves mentioned in the statement of Theorem \ref{use}.
We consider the full exceptional collection $\langle E_7, \ldots, E_0\rangle$  given in (\ref{col00}) with $t=0$ and the full exceptional collection $\langle F_7, \ldots, F_0\rangle$ given in (\ref{cold0}). We do not consider the column associated to $E_7$ since $H^i(\sE(-H))=0$ for any $i$.
 So we get the following table

{\small
  \begin{center}\begin{tabular}{|c|c|c|c|c|}
\hline
 $\sO((c-1)F-2H)$ &  $\sO((c-3)F-H)$ &  $\sO((c-2)F-H)$ &  $\sO((c-4)F)$ &  $\sO((c-3)F)$\\
\hline
\hline
 $H^3$		 	& $*$		 	& $*$		& $*$ &  $*$	\\
$ H^2$	 	& $H^3$	 	& $H^3$	&	 $*$			& $*$	\\
		 $H^1$	&	 $H^2$	&	 $H^2$	&	 $H^3$		& $H^3$	\\
	$H^0$	& $H^1$		& $H^1$		& $H^2$	& $H^2$	\\
 $*$  			& $H^0$		& $H^0$		& $H^1$	&	 $H^1$	\\
 $*$			& $*$			& $*$		& $H^0$		& $H^0$ \\
\hline
\hline
 $\sO(-H+F)$  & $\Omega(H-(c-1)F)$ & $\Omega(H-(c-2)F)$  & $\sO((2-c)F)$ & $\sO((3-c)F)$\\
\hline
\end{tabular}
\end{center}
}
By Lemma \ref{riv2}, Lemma \ref{riv4} and Riemann-Roch formula we obtain:
\begin{itemize}
\item $\chi(\sE((3-c)F))=-h^1(S,\sE((3-c)F))+\alpha=-(3k_1+k_2+c-4)$.
\item $\chi(\sE((2-c)F))=-h^1(S,\sE((2-c)F))+\beta=-(2k_1+k_2+c-3)$.
\item $\chi(\sE(-H+F))=-h^1(S,\sE(-H+F))=-k_1$.
\item $\chi(\sE\otimes\Omega_{S|\PP^1}(H-(c-2)F))=-h^1(S,\sE\otimes\Omega_{S|\PP^1}(H-(c-2)F))+\gamma=4k_1+k_2+1+\gamma$.
\item $\chi(\sE\otimes\Omega_{S|\PP^1}(H-(c-1)F))=-h^1(S,\sE\otimes\Omega_{S|\PP^1}(H-(c-1)F))+\delta=2k_1+k_2+\delta$.
\end{itemize}

So we have the following table

{\footnotesize
  \begin{center}\begin{tabular}{|c|c|c|c|c|}
\hline
 $\sO((c-1)F-2H)$  & $\sO((c-3)F-H)$  & $\sO((c-2)F-H)$  & $\sO((c-4)F)$  & $\sO((c-3)F)$\\
\hline
\hline
 $0$		 	& $*$		 	& $*$			& $*$  & $*$	\\
$ 0$	 	& $0$	 	& $0$	&	 $*$			& $*$	\\
		 $k_1$		& $\delta$		& $\gamma$		& $0$		& $0$	\\
	$0$ 	& $2k_1+k_2+\delta$		& $4k_1+k_2+1+\gamma$		& $\beta$		& $\alpha$	\\
 $*$  		&	$0$	&	$0$	&	$2k_1+k_2+c-3+\beta$	&	 $3k_1+k_2+c-4+\alpha$	\\
 $*$		&	 $*$		&	 $*$		& $0$	& $0$ \\
\hline
\hline
 $\sO(-H+F)$  & $\Omega(H-(c-1)F)$  &$\Omega(H-(c-2)F)$  & $\sO((2-c)F)$  &$\sO((3-c)F)$\\
\hline
\end{tabular}
\end{center}
}

So we obtain the claimed monads.

\end{proof}

\begin{remark}
In this remark we discuss the shape of the previous monads in some particular cases.

When $c=3$ the monads of Theorem \ref{mon1} and Theorem \ref{mon2} coincide. In this case $\alpha=\beta=\gamma=\delta=0$ by Lemma \ref{riv2} and the monad becomes

$$0\longrightarrow
\begin{matrix}
\sO(-2H+2F)^{\oplus k_1} \\
\oplus \\
\sO(-H)^{\oplus 2k_1+k_2}
\end{matrix}
\longrightarrow
\begin{matrix}
\Oo_S(-H+F)^{\oplus 4k_1+k_2+1}\\
\oplus \\
\sO(-F)^{\oplus 2k_1+k_2}
\end{matrix}
\longrightarrow \sO_S^{\oplus 3k_1+k_2-1}\longrightarrow 0.$$

When $c=4$, then  $\alpha=\gamma=0$ by Lemma \ref{riv2} and the monad of Theorem \ref{mon2}  becomes

$$0\longrightarrow
\begin{matrix}
\sO(-2H+3F)^{\oplus k_1} \\
\oplus \\
\sO(-H+F)^{\oplus 2k_1+k_2}
\end{matrix}
\longrightarrow
\begin{matrix}
\Oo_S(-H+F)^{\oplus \delta} \\
\oplus\\
\Oo_S(-H+2F)^{\oplus 4k_1+k_2+1}\\
\oplus \\
\sO^{\oplus 2k_1+k_2+1}
\end{matrix}
\longrightarrow
\begin{matrix}
\sO^{\oplus \beta} \\
\oplus \\
\sO_S(F)^{\oplus 3k_1+k_2-1}
\end{matrix}\longrightarrow 0.$$
\end{remark}
Now we are ready to deal with the general case.
\begin{theorem}\label{mon3}
Let $\sE$ be an instanton bundle on $S$ with $c_2(\sE)=k_1H^2+k_2HF$. Let us write $\alpha=h^2(\sE(-(c-3)F)$, $\beta=h^2(\sE(-(c-2)F)=h^1(\sE(-2H+(c-1)F)$, $\theta=h^1(\sE(-2H+(c-1)F)$. Then $\sE$ is the homology of a monad

\begin{equation}\label{monadegenerale}
0 \to
 \sA  \xrightarrow{\alpha}  \sB
\xrightarrow{\beta} \sC \to 0,
\end{equation}
where $\sA$ arises from the short exact sequence
\[
0 \longrightarrow
\sO_S(-H)^{\oplus \beta} \longrightarrow
\begin{matrix}
\Oo_S(-H)^{\oplus c-3+2k_1+k_2+\beta}\\
\oplus \\
\sO(-H+F)^{\oplus \theta}
\end{matrix}
\longrightarrow \sA\to 0,
\]
$\sC$ arises from the short exact sequence
\[
0
\longrightarrow
\sC\
\longrightarrow
\begin{matrix}
\sO((c-4)F)^{\oplus \beta} \\
\oplus\\
\sO_S((c-3)F)^{\oplus 3k_1+k_2+c-4+\alpha}
\end{matrix}
\longrightarrow
\sO_S((c-3)F)^{\oplus \alpha}\to 0
\]
and $\sB$ is given by
$$\sB=\Oo_S(-H+F)^{\oplus c-2+k_1+k_2+\theta}\oplus\Omega(H-F)^{\oplus k_1}\oplus \sO((c-4)F)^{\oplus 2k_1+k_2+c-3+\beta}.$$
\end{theorem}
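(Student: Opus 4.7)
The plan is to apply the Beilinson-type spectral sequence from Theorem \ref{use} to the instanton bundle $\sE$, using the full exceptional collection \eqref{col4} together with its right-dual collection \eqref{cold4}. This generalises the strategy of Theorems \ref{mon1}--\ref{mon2} in a natural way: the pair \eqref{col4}--\eqref{cold4} is chosen precisely because the bundles $\sO_S(-H)$, $\sO_S(-H+F)$, $\Omega_{S|\PP^1}(H-F)$, $\sO_S((c-4)F)$ and $\sO_S((c-3)F)$ appearing in the statement are exactly the sheaves $\mathcal{F}_{-p}$ produced in the bottom row of the spectral sequence.

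Concretely, I would fill in the $6\times 4$ cohomology table whose $(p,q)$-entry is $H^{q+k_{-p}}(\sE\otimes \mathcal{E}_{-p})$ for the six objects of \eqref{col4}. The column corresponding to $\sO_S(-H)[-1]$ vanishes identically since $H^i(\sE(-H))=0$ for every $i$ by the instantonic hypothesis and Serre duality \eqref{sd}. For the remaining five columns, Lemma \ref{riv2} kills the top and bottom rows for the twists of type $\sE(bF)$, $\sE(-H+bF)$ and $\sE(-2H+bF)$ lying in the admissible range, while the Riemann--Roch formulas \eqref{rrinsta}, \eqref{0H}, \eqref{-H} and \eqref{Omeg1} compute the dimensions of the surviving middle entries. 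The crucial new feature with respect to Theorems \ref{mon1}--\ref{mon2} is that, without the hypothesis $a_2\le 2$, the extra vanishings of Lemma \ref{riv4} are no longer automatic; this is what forces the introduction of the three parameters $\alpha$, $\beta$ and $\theta$, which track the cohomology that cannot be killed in the extremal rows of the columns associated to $\sO_S(-(c-3)F)$, $\sO_S(-(c-2)F)$ and $\sO_S(-2H+(c-1)F)$, with Serre duality explaining why $\beta$ shows up on both the $\sO_S(-H)$ side of $\sA$ and the $\sO_S((c-4)F)$ side of $\sC$.

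Once the table is complete, the spectral sequence converges to $\sE$ on the antidiagonal $p+q=0$, giving a three-term complex whose homology is $\sE$ and whose middle term is exactly the $\sB$ in the statement. The main obstacle, and the step requiring genuine care, is the bookkeeping at the extremal rows: the surplus copies of $\sO_S(-H)$ and $\sO_S((c-3)F)$ coming from the columns with non-trivial $\alpha$, $\beta$ and $\theta$ do not cancel against each other, and so must instead be packaged as the kernel and cokernel of the two short exact sequences defining $\sA$ and $\sC$ in the statement. This amounts to verifying that the relevant $E_1$-differentials have maximal rank, which should follow from the functoriality in Theorem \ref{use} combined with the vanishings $H^i(\sE(-H))=0$ and the $\mu$-semistability of $\sE$.
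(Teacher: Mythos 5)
Your proposal follows essentially the same route as the paper: the authors apply the Beilinson-type spectral sequence of Theorem \ref{use} to $\sE$ with exactly the pair of collections \eqref{col4} and \eqref{cold4}, discard the column associated to $\sO_S(-H)$ because $h^i(S,\sE(-H))=0$ for all $i$, compute the remaining entries via Lemma \ref{riv2}, Serre duality and Riemann--Roch (which is where $\alpha$, $\beta$ and $\theta$ enter, just as you describe), and read off the monad with the surplus terms in total degrees $\pm 2$ packaged into the short exact sequences defining $\sA$ and $\sC$ by convergence of the spectral sequence. The only cosmetic difference is that the relevant Euler characteristics come from \eqref{0H} and \eqref{-H} rather than \eqref{Omeg1}, and the exactness of the sequences for $\sA$ and $\sC$ is forced by $E_\infty^{p,q}=0$ off the antidiagonal rather than by any maximal-rank verification.
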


\begin{proof}
 We consider the Beilinson type spectral sequence associated to an instanton bundle $\sE$ and identify the members of the graded sheaf associated to the induced filtration as the sheaves mentioned in the statement of Theorem \ref{use}.
We consider the full exceptional collection $\langle E_7, \ldots, E_0\rangle$  given in (\ref{col4}) and the full exceptional collection $\langle F_7, \ldots, F_0\rangle$ given in (\ref{cold4}).
 So we get the following table

{\footnotesize
  \begin{center}\begin{tabular}{|c|c|c|c|c|c|}
\hline
$\sO(-H)$  & $\sO(-H+F)$  & $\Omega(H-2F)$ & $\Omega(H-F)$ & $\sO((c-4)F)$ &  $\sO((c-3)F)$\\
\hline
\hline
$H^3$		&	 $H^3$	& $*$		& $*$			& $*$			& $*$	\\
$H^2$		&$ H^2$		 	& $0$	& $H^3$	& $*$			& $*$	\\
		$H^1$		& $H^1$	&	$0$	& $H^2$		& $H^3$		& $H^3$	\\
	$H^0$&  $H^0$&	$0$	& $H^1$		& $H^2$		& $H^2$	\\
$*$		&	 $*$  		& $0$		& $H^0$	&	 $H^1$		& $H^1$	\\
$*$		&	 $*$		&	$*$	& $*$		& $H^0$	&	 $H^0$ \\
\hline
\hline
$\Oo_S(-2H+(c-2)F)$  & $\Oo_S(-2H+(c-1)F)$  &$\Oo_S(-H)$ &$\Oo_S(-H+F)$  & $\Oo_S((2-c)F)$ &  $\Oo_S((3-c)F)$\\
\hline
\end{tabular}
\end{center}
}

By Lemma \ref{riv2} and Riemann-Roch formula we obtain:
\begin{itemize}
\item $\chi(\sE((3-c)F))=-h^1(S,\sE((3-c)F))+\alpha=-(3k_1+k_2+c-4)$.
\item $\chi(\sE((2-c)F))=-h^1(S,\sE((2-c)-F))+\beta=-(2k_1+k_2+c-3)$.
\item $\chi(\sE(-H+F))=-h^1(S,\sE(-H+F))=-k_1$.
\item $\chi(\sE(-2H+(c-1)F)=-\chi(\sE((1-c)F))=h^2(S,\sE(-2H+(c-1)F)-\theta=c-2+k_1+k_2$.
\item $\chi(\sE(-2H+(c-2)F)=-\chi(\sE((2-c)F))=h^2(S,\sE(-2H+(c-2)F)-\beta=c-3+2k_1+k_2$.
\end{itemize}
So we have the following table (we omit the third column since it is made of all zeros).
{\footnotesize
\begin{center}\begin{tabular}{|c|c|c|c|c|}
\hline
$\sO(-H)$ &  $\sO(-H+F)$   & $\Omega(H-F)$  & $\sO((c-4)F)$  & $\sO((c-3)F)$\\
\hline
\hline
$0$		 	& $0$		& 	 $*$			& $*$  & $*$	\\
$ c-3+2k_1+k_2+\beta$	 	& $c-2+k_1+k_2+\theta$	 	& $0$		& $*$			& $*$	\\
		 $\beta$		& $\theta$		& $0$		& $0$		& $0$	\\
	$0$&	 $0$		& $k_1$		& $\beta$		& $\alpha$	\\
$*$  			& $*$		& $0$		& $2k_1+k_2+c-3+\beta$		& $3k_1+k_2+c-4+\alpha$	\\
$*$		&	 $*$			& $*$		& $0$		& $0$ \\
\hline
\hline
$\Oo_S(-2H+(c-2)F)$  & $\Oo_S(-2H+(c-1)F)$   &$\Oo_S(-H+F)$   & $\Oo_S((2-c)F)$  & $\Oo_S((3-c)F)$\\
\hline
\end{tabular}
\end{center}}
So we obtain the claimed monad.
\end{proof}


\begin{remark}The maps in the exact sequences defining $\sA$ and $\sC$ are pullbacks from $\mathbb P^1$. So we have $$\sA=\oplus_{i=1}^z\Oo_S(-H+l_iF)$$ where $z=c-3+2k_1+k_2+\theta$ and $l_i\geq 0$ for any $i$ and
$$\sC=\oplus_{j=1}^y\Oo_S(l_jF)$$ where $y=3k_1+k_2+c-2+\beta$ and $l_j\leq c-4$ for any $j$. When $c=3$ then $\alpha=\beta=0$ and the monad becomes

\[
0 \longrightarrow
\begin{matrix}
\Oo_S(-H)^{\oplus 2k_1+k_2} \\
\oplus \\
\sO(-H+F)^{\oplus \theta}
\end{matrix}
\longrightarrow
\begin{matrix}
\Oo_S(-H+F)^{\oplus 1+k_1+k_2+\theta} \\
\oplus \\
\Omega(H-F)^{\oplus k_1} \\
\oplus \\
\sO(-F)^{\oplus 2k_1+k_2}
\end{matrix}
\longrightarrow
\sO_S^{\oplus 3k_1+k_2-1}\to 0.
\]
\end{remark}

It is worth to notice that Theorem \ref{mon3} imposes some necessary conditions on the second Chern class of an instanton bundle. Indeed we obtain that if $\sE$ is an instanton bundle with $c_2=k_1H^2+k_2HF$ then $k_1\geq 0$. However by Equation \eqref{rrinsta} and Lemma \ref{riv2} we obtain $h^1(S,\sE)=c_2(\sE)H-1=ck_1+k_2-1\geq 0$, thus  $k_2\geq 1-ck_1$. Recall that since $h^2(S,\sE)=0$ then minimal charge instanton bundles are actually Ulrich. The only rank two Ulrich bundles satisfy $k_1=0$ and $k_2=1$, or $k_1=1$ and $k_2=1-c$ (see \cite[Remark 3.10]{AHMP} and \cite[Proposition 3.3]{MR}. If we impose the conditions $k_1=1$ and $k_2=1-c$ in Theorem \ref{mon3} we get $\sA=\sC=0$ and $\sE=\sB=\Omega_{S|\p}(H-F)$ which is the unique $\mu$-stable rank two Ulrich bundle $\Omega_{S|\p}(H-F)$ twisted by $\sO_S(-H)$. The other minimal charge case is represented by the strictly $\mu$-semistable rank two Ulrich bundles (i.e. an extension of two Ulrich line bundles). In particular we can conclude that if $k_1 \geq 2$ then $k_2 >1-ck_1$.

In the following proposition we show that instanton bundles with $k_1=0$ actually behave like the strictly $\mu$-semistable rank two Ulrich bundles (i.e. $k_1=0$ and $k_2=1$)

\begin{proposition}
Let $\sE$ be an instanton bundle on $S$ with $c_2(\sE)=k_1H^2+k_2HF$, then $\sE$ is an extension of two line bundles if and only if $k_1=0$. In this case $\sE$ arises from the following extension
\begin{equation}\label{ext}
0 \to \sO(-k_2F)  \rightarrow \sE\rightarrow \sO(-H+(c+k_2-2)F) \to 0.
\end{equation}
\end{proposition}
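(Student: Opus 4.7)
The plan is to treat each direction separately. The forward implication reduces to a Chern-class computation constrained by the nonnegativity $k_1 \geq 0$ that comes out of the monadic description. The reverse implication rests on showing that $\Hom(\sE,\sO(-H+(c-2+k_2)F))$ is non-zero and analysing the kernel/image decomposition of any morphism, using crucially that $c_2(\sE)$ has no $H^2$-component.

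For $(\Rightarrow)$, I take an extension $0 \to \sO(\alpha_1 H + \beta_1 F) \to \sE \to \sO(\alpha_2 H + \beta_2 F) \to 0$. Matching $c_1(\sE) = -H+(c-2)F$ gives $\alpha_1+\alpha_2 = -1$ and $\beta_1+\beta_2 = c-2$, and using $F^2 = 0$,
\[
c_2(\sE) = \alpha_1\alpha_2 H^2 + (\alpha_1\beta_2+\alpha_2\beta_1)HF,
\]
so $k_1 = \alpha_1\alpha_2$. The remark following Theorem \ref{mon3} shows that $k_1 \geq 0$ whenever a monad exists, while the constraint $\alpha_1+\alpha_2 = -1$ on integers yields $\alpha_1\alpha_2 \leq 0$. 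Hence $k_1 = 0$ and $\{\alpha_1,\alpha_2\} = \{0,-1\}$. The ordering is pinned down by $\mu$-semistability, and solving for $\beta_1,\beta_2$ produces $\beta_1 = -k_2$, $\beta_2 = c-2+k_2$.

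For $(\Leftarrow)$, assume $k_1 = 0$. Formula \eqref{0H} gives $\chi(\sE(k_2F)) = 1$. Serre duality combined with Lemma \ref{riv2}(i) kills $h^3(\sE(k_2F))$, while an induction on $b \geq 0$ through the sequence \eqref{eq3}, bootstrapped from Lemma \ref{riv2}(iv), kills $h^2(\sE(k_2F))$. Thus $h^0(\sE(k_2F)) \geq 1$. Using $\sE^\vee \cong \sE(H-(c-2)F)$, this gives
\[
\Hom(\sE,\sO(-H+(c-2+k_2)F)) \cong H^0(\sE(k_2F)) \neq 0,
\]
so I obtain a non-zero morphism $\phi: \sE \to \sO(-H+(c-2+k_2)F)$. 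Its kernel is a saturated rank-one subsheaf of $\sE$, hence a line bundle $\sO(D-k_2F)$ for some effective $D = aH+bF$, and its image is of the form $I_W \otimes \sO(-H+(c-2+k_2)F-D)$ for a codimension-two subscheme $W$. A direct computation of $c_2(\sE) - c_1(\ker\phi)\cdot c_1(\operatorname{im}\phi)$ gives
\[
[W] = a(a+1)H^2 + \bigl(b(2a+1) - a(c-2+2k_2)\bigr)HF.
\]

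The main obstacle is to show that $D = 0$, for then $[W] = 0$ is automatic, $\phi$ is surjective with kernel $\sO(-k_2F)$, and the stated extension drops out. I plan to combine $\mu$-semistability of $\sE$, which forces $ac+b \leq k_2-1$, with the effectiveness of both $D$ and $[W]$ (each coefficient non-negative) to squeeze the admissible pairs $(a,b)$: for $a \geq 1$ the two effectiveness requirements together with the semistability bound produce numerically incompatible inequalities unless $k_2$ is unreasonably large relative to $c$, forcing $a=0$; the remaining case $a=0,\,b>0$ is eliminated by the observation that a section of $\sE(k_2F)$ vanishing on $b$ fibres descends to $\sE((k_2-b)F)$, contradicting the minimality of the twist at which $h^0(\sE(\cdot F))$ first becomes non-zero in view of Lemma \ref{riv2}(i) and the Euler characteristic \eqref{0H}.
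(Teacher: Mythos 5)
Your forward direction (extension of two line bundles $\Rightarrow k_1=0$, together with the explicit shape of \eqref{ext}) is correct and is essentially the paper's argument for that implication: $\alpha_1+\alpha_2=-1$ forces $\alpha_1\alpha_2\le 0$, while the monadic description forces $k_1=\alpha_1\alpha_2\ge 0$. The converse is where your route genuinely diverges from the paper's, and where there is a gap. The paper does not attempt to control the degeneracy divisor numerically: it takes the \emph{minimal} twist $t_0$ with $h^0(S,\sE(t_0F))>0$ and uses the monad of Theorem \ref{mon3} (which for $k_1=0$ is built entirely from line bundles of the form $\sO_S(-H+\ell F)$ and $\sO_S(\ell F)$, the $\Omega_{S|\PP^1}(H-F)^{\oplus k_1}$ summand being absent) to compute $h^0(S,\sE(t_0F))=t_0-k_2+1$ exactly. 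This simultaneously pins down $t_0=k_2$, gives $h^0=1$, and forces the residual curve class $(k_2-t_0)HF$ to vanish. That computation is precisely the input your argument lacks.

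Concretely, two steps of your squeeze fail. First, for $a\ge 1$ the inequalities at your disposal --- effectivity of $D=aH+bF$, $\mu$-semistability of the sub-line bundle (which reads $ac+b\le k_2-1$), and effectivity of the curve class $[W]$ --- are \emph{compatible}: already for $a=1$ they amount to roughly $(2k_2-2-c)/3\le b\le k_2-1-c$, which admits integer solutions as soon as $k_2\ge 2c+1$ (and the sharper constraint coming from the actual effective cone of curves, generated by $HF$ and $H^2-(a_1+a_2)HF$, only shifts this threshold). Since the proposition must hold for every admissible $k_2$, the large-charge regime cannot be discarded, and $a=0$ does not follow. Second, your elimination of the case $a=0$, $b>0$ appeals to the minimality of the twist at which $h^0(S,\sE(\cdot F))$ becomes non-zero, but you never show that this minimal twist equals $k_2$: Lemma \ref{riv2}(i) gives vanishing only for $t\le 1$, and for $1<t<k_2$ formula \eqref{0H} yields $\chi(\sE(tF))\le 0$, hence only $h^0\le h^1$, not $h^0=0$. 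If you instead work at the minimal twist $t_0$ so that the descent argument is legitimate, you lose the identity $\chi=1$ and can no longer conclude that the residual curve is empty without knowing $t_0=k_2$. Both gaps are closed at once by the paper's monadic computation of $h^0(S,\sE(tF))$; without it, or some substitute bound of the form $h^0(S,\sE(tF))\le\max(0,\,t-k_2+1)$, the converse direction is incomplete.
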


\begin{proof}
Let us start by showing that any  instanton bundle with $c_2(\sE)=k_2HF$ is an extension of the form \eqref{ext}. Let $t_0$ be the minimal integer such that $h^0(S,\sE(t_0F))>0$. Let $s$ be a section $s \in H^0(S,\sE(t_0F))$, thus we have the following short exact sequence
\begin{equation}\label{serresemistable}
0\to \sO_S \to \sE(t_0F) \to \sI_{C|S}(-H+(c-2+2t_0)F)\to 0,
\end{equation}
where $C$ is a curve in the class $c_2(\sE(t_0F))=(k_2-t_0)HF$. Now we show that $t_0=k_2$. Since $\sE$ is an instanton bundle, it is the cohomology of a monad of the form \eqref{monadegenerale}. Twisting $\eqref{monadegenerale}$ by $\sO_S(t_0F)$ and taking the long exact sequence induced in cohomology by its display, we obtain
\[
h^0(S,\sE(t_0F))=t_0-k_2+1.
\]
Since this $h^0(S,\sE(t_0F))=1$ by \eqref{serresemistable}, we obtain $t_0=k_2$ and $C=\emptyset$, thus $\sE$ fits into \eqref{ext}.

Conversely, suppose $\sE$ is an instanton bundle which is realized as an extension of two line bundles
\[
0\to \sO_S(A) \to \sE \to \sO_S(B)\to0.
\]
Then a straightforward computation of the Chern classes shows that necessarily $c_2(\sE)=k_2HF$.
\end{proof}

In the next sections we will prove the existence of H-instanton bundles on $S$ with any given admissible second Chern class, given some restriction on $S$ itself.

We conclude this section with a remark on the $\pee1 \times \pee2$ case.

\begin{remark}
In this remark we will point out that there are vector bundles realized as the cohomology of a monad as in Theorem \ref{mon3}, which satisfy all the instantonic condition but the $\mu$-semistability. Let us consider the rational normal scroll $S=\pee1\times\pee2$ and let us take a line in the fiber, i.e. a line $l$ in the class $|HF|$. It is straightforward to see that $\sN_{l/S} \cong \sO_\p(1) \oplus \sO_\p$. In particular $\det \sN_{l/S}$ is the restriction to $l$ of any line bundle of the form $\sO_S(H+bF)$, thus we can suppose that $\det \sN_{l/S}\cong \sO_S(H-5F)\otimes \sO_l$. Since $h^i(\sO_S(-H+5F))=0$ for each $i$, then by Serre's Correspondence (see Proposition \ref{HScorr}) there exists a unique rank two vector bundle realized as an extension
\begin{equation}\label{serrenosemistable}
0\to \sO_S \to \sF \to \sI_{l|S}(H-5F) \to 0.
\end{equation}
Let us denote by $\sE$ the vector bundle $\sE\cong \sF(-H+3F)$. By a computation of the Chern classes we obtain $c_1(\sE)=-H+F$ and $c_2(\sE)=3HF$. Furthermore twisting \eqref{serrenosemistable} by $\sO_S(-H+3F)$ and taking the cohomology we obtain $h^0(\sE)=0$. To check the instantonic condition it is enough to twist \eqref{serrenosemistable} by $\sO_S(-2H+3F)$ and since  both $\sI_{l|S}(-H-2F)$ and $\sO_S(-2H+3F)$ are acyclic we obtain $h^1(S,\sE(-H))=0$. Thus $\sE$ has the same invariants and the same cohomology table of an instanton bundle on $S$, thus it can be realized as a cohomology of a monad as in Theorem \ref{mon3}. However, by construction, $h^0(S,\sE(H-3F))=1$, thus by Proposition \ref{hoppe} we conclude that $\sE$ cannot be $\mu$-semistable.
\end{remark}

\section{Construction of instanton bundles with increasing $c_2$}
In this section we describe an iterative process to construct instanton bundles with increasing charge. Let us begin by considering the base case of this process. Recall that a minimal charge instanton bundle is actually an Ulrich bundle (up to twist). The only rank two stable Ulrich bundle on $S(a_0,a_1,a_2)$ is given by $\Omega_{S|\pp^1}(2H-F)$ (see \cite[Theorem 3.8]{AHMP}) and the correspondent instanton is $\Omega_{S|\pp^1}(H-F)$.

\begin{lemma}\label{restriction}
$\Omega_{S|\pp^1}(H-F) \otimes \sO_L \cong \sO_\p \oplus \sO_\p(-1)$ for the generic line $L \in |HF|$.
\end{lemma}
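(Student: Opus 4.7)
The plan is to restrict the defining sequence \eqref{eq1} for $\Omega_{S|\PP^1}$ to $L$, reduce to the familiar Euler sequence on a line in $\pp^2$, and read off the splitting type from rank, degree and $h^0$.

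First I would note that a curve $L$ of class $HF$ has $L\cdot H=H^2F=1$ and $L\cdot F=HF^2=0$, so $L$ is a line contained in a single fibre $F\cong\pp^2$ of $\pi\colon S\to\pp^1$. Consequently the restrictions to $L$ of the tautological line bundles are $\sO_S(H)\otimes\sO_L\cong\sO_{\p}(1)$ and $\sO_S(F)\otimes\sO_L\cong\sO_{\p}$ (the normal bundle of a fibre in a $\pp^2$-bundle is trivial). In particular $\sO_S(H-F)\otimes\sO_L\cong\sO_{\p}(1)$ and $\sO_S(-H+a_iF)(H-F)\otimes\sO_L\cong\sO_{\p}$ for every $i$.

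Next I would tensor the dual relative Euler sequence \eqref{eq1} by $\sO_S(H-F)$ and restrict to $L$. Since all three terms of \eqref{eq1} are locally free, the restriction remains short exact, giving
\[
0\longrightarrow \Omega_{S|\PP^1}(H-F)\otimes\sO_L\longrightarrow \sO_{\p}^{\oplus 3}\longrightarrow \sO_{\p}(1)\longrightarrow 0.
\]
The induced map on global sections $H^0(\sO_{\p}^{\oplus 3})=\kk^3\to H^0(\sO_{\p}(1))=\kk^2$ is just the restriction of the three coordinates of $\pp^2$ to the line $L$, which is surjective. Taking the long exact sequence, together with $H^1(\sO_{\p})=0$, yields $h^0(L,\Omega_{S|\PP^1}(H-F)\otimes\sO_L)=1$.

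Finally, $\Omega_{S|\PP^1}(H-F)\otimes\sO_L$ is a rank-two vector bundle on $\p$, hence splits as $\sO_{\p}(a)\oplus\sO_{\p}(b)$ with $a\ge b$. Its determinant has degree $(-3H+cF+2(H-F))\cdot HF=-1$, so $a+b=-1$, and $h^0=1$ forces $a\ge 0$ and $b<0$; the only possibility is $(a,b)=(0,-1)$, giving the claimed splitting. No step should be an obstacle: the main points are the triviality of the normal bundle of a fibre and the surjectivity of the evaluation map, both standard.
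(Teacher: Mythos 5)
Your proof is correct and follows essentially the same route as the paper: restrict the twisted sequence \eqref{eq1} to $L$, compute $a+b=-1$ from the determinant, and pin down the splitting type. The only (immaterial) difference is in the last step, where the paper deduces $a,b\le 0$ directly from the injection into $\sO_\p^{\oplus 3}$, while you compute $h^0=1$ via the surjectivity of the evaluation map; both arguments are valid.
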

\begin{proof}
By Grothendieck's theorem $\Omega_{S|\pp^1}(H-F) \otimes \sO_L\cong \sO_{\p}(a)\oplus \sO_\p (b)$. Take the short exact sequence \eqref{eq1} and tensor it by $\sO_S(H-F)\otimes \sO_L$. Computing the first Chern classes we obtain $a+b=-1$. But we have an injective map
\[
0\to \Omega_{S|\pp^1}(H-F) \otimes \sO_L \to \sO_\p^{\oplus 3}
\]
thus we obtain that $a$ and $b$ are both less or equal than $0$, so the only choice is $\Omega_{S|\pp^1}(H-F) \otimes \sO_L\cong \sO_{\p}\oplus \sO_\p (-1)$.
\end{proof}

\begin{construction}\label{deformation}
In this construction, starting with a $\mu$-stable instanton bundle $\sE$ with charge $c_2(\sE)=k_1H^2+k_2HF$. Suppose $\sE_L \cong \sO_\p \oplus \sO_\p(-1)$ for the generic line in $|HF|$ and $\Ext^2(\sE,\sE)\cong\Ext^3(\sE,\sE)=0$, then we will construct a stable instanton bundle $\sF$ with $c_2(\sE)=k_1H^2+(k_2+1)HF$, generically isomorphic to $ \sO_\p \oplus \sO_\p(-1)$ on lines in $|HF|$ and such that $\Ext^2(\sF,\sF)\cong \Ext^3(\sF,\sF)=0$.

\vspace{0.3cm}
{\textbf{Step 1:}}Defininig a sheaf with increasing $c_2$.

By hypothesis there exists a surjective map $\sE \xrightarrow{\phi} \sO_L \to 0$. Let $\sE_\phi$ be the kernel of such morphism, i.e.
\begin{equation}\label{defses}
0 \to \sE_\phi \to \sE \xrightarrow{\phi}\sO_L\to 0.
\end{equation}
$\sE_\phi$ is a torsion free sheaf, which is not locally free. We show that $\sE_\phi$ satisfies all the other instantonic conditions.

$\bullet \ H^0(S,\sE_\phi)=0$ and $H^1(S,\sE(-H))=0$.

Take the sequence induced in cohomology by \eqref{defses}. Then $h^0(S,\sE_\phi)\leq h^0(S,\sE)=0$. To show the second vanishing it is enough to twist \eqref{defses} by $\sO(-H)$ and observe that $h^i(S,\sE(-H))=h^i(S,\sO_L(-1))=0$ for all $i$.

$\bullet \ c_1(\sE_\phi)=c_1(\sE)$ and $c_2(\sE_\phi)=c_2(\sE)+HF$.

In order to compute the Chern classes it is enough to observe that $c_1(\sO_L)=0$, $c_2(\sO_L)=-HF$ and $c_3(\sO_L)=1$.

$\bullet \ \Ext^2(\sE_\phi,\sE_\phi)=0$ and $\Ext^3(\sE_\phi,\sE_\phi)=0$.

Apply the covariant functor $\Hom (\sE,-)$ to \eqref{defses}. We have $\Ext^i(\sE,\sO_L)\cong H^i(\p,\sE_L^\vee)\cong H^i(\p,\sO_\p \oplus \sO_\p(1))$, which vanishes for all $i>0$. Thus we obtain $\Ext^2(\sE,\sE_\phi)\cong \Ext^2(\sE,\sE)=0$. Now we apply the contravariant functor $\Hom(-,\sE_\phi)$ to \eqref{defses} we obtain the following injective map
\[
0 \to \Ext^2(\sE_\phi,\sE_\phi)\to \Ext^3(\sO_L,\sE_\phi).
\]
We show that $\Ext^3(\sO_L,\sE_\phi)=0$. By duality we have $\Ext^3(\sO_L,\sE_\phi)\cong \Hom(\sE_\phi,\sO_\p(-3))^\vee$. Since a line in $|HF|$ is a locally complete intersection then $\mathcal{E}xt^i(\sO_L,\sO_L)\cong \Lambda ^i(\sN_{L|S})$ and in particular $\mathcal{E}xt^1(\sO_L, \sO_L(-3))\cong \sO_\p(-2)\oplus\sO_\p(-3)$. Now apply $\mathcal{H}om(-,\sO_L(-3))$ to \eqref{defses}, thus we have
\[
0 \to \mathcal{H}om(\sO_L,\sO_L(-3)) \to \mathcal{H}om(\sE,\sO_L(-3)) \to\mathcal{H}om(\sE_\phi,\sO_L(-3))\to \sO_L(-2)\oplus \sO_L(-3)\to 0
\]
which becomes
\[
0 \to \sO_L(-3) \to \sO_L(-2)\oplus \sO_L(-3) \to \mathcal{H}om(\sE_\phi,\sO_L(-3)) \to \sO_L(-2)\oplus \sO_L(-3)\to 0,
\]
thus we obtain $\Hom(\sE_\phi,\sO_L(-3)) \cong H^0(S,\mathcal{H}om(\sE_\phi,\sO_L(-3)))=0$ and finally $\Ext^2(\sE_\phi,\sE_\phi)=0$.

To show that $\Ext^3(\sE_\phi,\sE_\phi)=0$ observe that the $\mu$-stability of $\sE$ implies the $\mu$-stability of $\sE_\phi$. Since $\sE_\phi$ is simple, by \cite[Lemma 2.1]{CCGM} we obtain $\Ext^3(\sE_\phi,\sE_\phi)=0$.
In particular $\sE_\phi$ correspond to a smooth point in the moduli space of $\mu$-stable sheaves $M_S(2,c_1(\sE),c_2(\sE)+HF)$. In order to compute its dimension we use the relation
\[
\chi(\sE_\phi,\sE_\phi)=\chi(\sE,\sE)-\chi(\sE,\sO_L)-\chi(\sO_L,\sE)+\chi(\sO_L,\sO_L).
\]
Using the exact sequences as in the previous arguments it is straightforward to compute $\chi(\sE,\sO_L)=1$ and $\chi(\sO_L,\sE)=3$ In order to compute $\chi(\sO_L,\sO_L)$ apply $\Hom(\sO_L,-)$ to the exact sequence
\[
0\to \sO_S(-H-F) \to \sO_S(-H) \oplus \sO_S(-F) \to \sO_S \to \sO_L \to 0.
\]
Thus we have $\chi(\sO_L,\sO_L)=\chi(\sO_L,\sO_S)-\chi(\sO_L,\sO_S(-H)\oplus \sO_S(-F))=0$. In particular $\chi(\sE_\phi,\sE_\phi)=\chi(E,E)-4$ and
\[
\dim \Ext^1(\sE_\phi,\sE_\phi)=\dim \Ext^1(\sE,\sE)+4.
\]
Moreover $\sE_\phi \otimes \sO_{L'}\cong \sO_\p \oplus \sO_\p(-1)$ for every line $L'$ non intersecting $L$. But every line which does not lie in the same fiber as $L$ is disjoint from $L$, and since having this splitting type is an open property, then $\sE_\phi \otimes \sO_{L}\cong \sO_\p \oplus \sO_\p(-1)$ for the generic line in $|HF|$.

\vspace{0.3cm}
{\textbf{Step 2:}} Deforming $\sE_\phi$ to a locally free sheaf.

Let us take $\sF$ a general deformation of $\sE_\phi$ in $M_S(2,c_1(\sE),c_2(\sE)+HF)$  and $L'$ a general deformation of $L$. We want to show that $\sF$ is a locally free sheaf, thus it is an instanton bundle.

The strategy is to show that if $\sF$ is not locally free then it would sit in a short exact sequence of the form
\begin{equation}\label{badses}
0\to\sF\to\sE'\to \sO_{L'} \to 0,
\end{equation}
where $\sE'$ is a deformation of $\sE$. But this is not possible since $\sF$ lies in a component of the moduli space of dimension $\dim \Ext^1(\sE,\sE)+4$, but the vector bundles lying in \eqref{badses} describe a family of dimension $\dim\Ext^1(\sE,\sE)+2$, thus $\sF$ must be locally free.

Suppose therefore that $\sF$ is not locally free and consider the natural bidual sequence
\begin{equation}\label{biduale}
0 \to \sF \to \sF^{\vee\vee}\to T\to 0
\end{equation}

Let us denote by $Y$ the support of $T$. Since we supposed $\sF$ not locally free, we have that $Y\neq \emptyset$. Furthermore $T$ is supported in codimension at least two. We say that $Y$ has pure dimension one.

In fact twisting \eqref{biduale} by $\sO_S(-H)$ we observe that if $H^0(S,\sF^{\vee \vee}(-H))\neq 0$ then a nonzero global section of $\sF^{\vee \vee}$ will induce via pull-back a subsheaf $K$ of $\sF$ with $c_1(K)=h$, which is not possible since $\sF$ is stable. So we have $H^0(S,\sF^{\vee \vee}(-H))\cong H^1(S,\sF(-H))=0$ which implies $H^0(S,T(-H))=0$. In particular $Y$ has no embedded points, i.e. is pure of dimension one. We want to show that $Y$ is actually a line.

Let $H$ be a general hyperplane section which does not intersect the points where $F^{\vee\vee}$ is not locally free. Tensor \eqref{defses} by $\sO_H$. Since $H$ is general the sequence remains exact and $\sO_{L\cap H}$ is supported at one point, which represent the point where $\sE_\phi \otimes\sO_H$ fails to be reflexive (in this case also locally free). $\sF$ is a deformation of $\sE_\phi$ and because of the choice of $H$, restricting \eqref{biduale} to $H$ does not affect the exactness of the short exact sequence. Moreover $T_H$ is supported on points where $\sF_H$ is not reflexive. Since being reflexive is an open condition, by semicontinuity $T_H$ is supported at most at one point. But $Y$ cannot be empty and is purely one dimensional, thus $Y\cap H$ consists of one point and $Y$ must be a line $L$. Furthermore by semicontinuity $T$ is of generic rank one and we have $c_2(T)H=-1$ (see \cite[Example 15.3.1]{fulton}).

Now choose $t<<0$. By Serre's vanishing and taking the cohomology of \eqref{biduale} we have that $h^1(Y,T(tH))\leq h^2(S,\sF(tH))$.  Observe that $d=c_3(F^{\vee \vee})$ and $c_2(T)$ are invariant for twists. By computing the Chern classes using \eqref{biduale} we have $c_3(T)=d$ and $$c_3(T(tH))=d-c_2(T)c_1(\sF(tH))= d-c_2(T)\bigl((2t-1)H+(d-2)F\bigr).$$
By Riemann-Roch we have $\chi(T(tH))=\frac{1}{2}d+(t+1)$, thus
\[
-\frac{1}{2}d-(t+1)=h^1(Y,T(th))\leq h^2(S,\sF(tH))\leq -(t+1)
\]
and in particular $d\leq0$. However $d=c_3(F^{\vee \vee})$ and $d\geq 0$ since $\sF^{\vee\vee}$ is reflexive, thus we must have $d=0$.
This implies that $\sF^{\vee \vee}$ is locally free. Moreover $c_2(T)=-HF$, so we get $c_2(\sF^{\vee \vee})=aH^2+(b+1)HF$, which implies that $\sF^{\vee \vee}$ has the same Chern classes as $\sE$ and is $\mu$-stable because of the $\mu$-stability of $\sF$. Therefore, $\sF^{\vee \vee}$ is a flat deformation of $\sE$ and it is also stable, so $\sF^{\vee \vee}$ lies in a neighborhood of $\sE$ in $M_S(2,c_1(\sE),c_2(\sE))$.

\end{construction}
By applying recursively Construction \ref{deformation} to the unique $\mu$-stable rank two (twisted) Ulrich bundle $\Omega_{S|\p}(H-F)$ we obtain the following result.
\begin{theorem}
Let $S=S(a_0,a_1,a_2)$ be a rational normal scroll of degree $c$. Then for every $\alpha\geq 1-c$ there exists a $\mu$-stable instanton bundle $\sE$ with $c_2(\sE)=H^2+\alpha HF$ such that
\[
\Ext_X^1(\sE,\sE)=6(c-1+\alpha), \qquad \Ext_X^2(\sE,\sE)=\Ext_X^3(\sE,\sE)=0
\]
and such that $E$ is generically trivial on lines.

In particular there exists, inside the moduli space $\MI(H^2+\alpha HF)$ of $\mu$-stable instanton bundles with $c_2(\sE)=H^2+\alpha HF$, a generically smooth, irreducible component of dimension $6(c-1+\alpha)$.
\end{theorem}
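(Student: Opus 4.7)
The plan is induction on $\alpha$, using Construction \ref{deformation} as the inductive step starting from the unique $\mu$-stable (twisted) rank two Ulrich bundle. For the base case $\alpha=1-c$ take $\sE_0:=\Omega_{S|\PP^1}(H-F)$: a direct Chern class computation gives $c_1(\sE_0)=-H+(c-2)F$ and $c_2(\sE_0)=H^2+(1-c)HF$, so $\sE_0$ is an $H$-instanton with the required invariants, and by \cite[Theorem 3.8]{AHMP} it is the unique $\mu$-stable rank two Ulrich (up to twist). Uniqueness gives rigidity $\Ext^1(\sE_0,\sE_0)=0$; $\mu$-stability together with $c_1(\sE_0)\neq c_1(\sE_0\otimes\omega_S)$ yields $\Ext^3(\sE_0,\sE_0)=0$ via Serre duality; and $\Ext^2(\sE_0,\sE_0)=0$ follows by computing the cohomology of $\sE_0\otimes\sE_0^\vee$ through the sequences \eqref{eq1}--\eqref{eq2} and Lemma \ref{lem0}. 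Lemma \ref{restriction} provides the generic splitting $\sE_0|_L\cong\sO_\p\oplus\sO_\p(-1)$ on lines $L\in|HF|$, so $\sE_0$ satisfies all hypotheses of Construction \ref{deformation}.

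For the inductive step, given $\sE_i$ with $c_2(\sE_i)=H^2+(1-c+i)HF$ satisfying the same list of properties, Construction \ref{deformation} produces a $\mu$-stable locally free instanton $\sE_{i+1}$ with $c_2(\sE_{i+1})=c_2(\sE_i)+HF$, preserving $\Ext^{2,3}=0$ and the generic splitting on lines in $|HF|$. After $n:=\alpha+c-1$ iterations (valid since $\alpha\geq 1-c$) the resulting bundle $\sE:=\sE_n$ has $c_2(\sE)=H^2+\alpha HF$ and enjoys all the conclusions of the theorem except for the computation of $\dim\Ext^1$. For that, the Euler characteristic identity from Construction \ref{deformation} gives, at each step,
\[
\chi(\sE_{i+1},\sE_{i+1})-\chi(\sE_i,\sE_i)=-\chi(\sE_i,\sO_L)-\chi(\sO_L,\sE_i)+\chi(\sO_L,\sO_L).
\]
The Koszul resolution of $\sO_L$ on $S$ gives $\chi(\sO_L,\sO_L)=0$; the splitting $\sE_i|_L\cong\sO_\p\oplus\sO_\p(-1)$ together with $\sE_i^\vee\cong\sE_i(H-(c-2)F)$ gives $\chi(\sE_i,\sO_L)=\chi(\PP^1,\sO_\p\oplus\sO_\p(1))=3$, and Serre duality on $S$ (with $\omega_S\otimes\sO_L\cong\sO_\p(-3)$) gives $\chi(\sO_L,\sE_i)=3$ as well. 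Hence each iteration drops $\chi(\sE,\sE)$ by $6$, i.e.\ raises $\dim\Ext^1$ by $6$; starting from $\dim\Ext^1(\sE_0,\sE_0)=0$, after $n$ steps we reach $\dim\Ext^1(\sE,\sE)=6(c-1+\alpha)$.

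Since $\Ext^{2,3}(\sE,\sE)=0$, the point $[\sE]$ is a smooth unobstructed point of $\MI(H^2+\alpha HF)$, and therefore lies on a unique irreducible component of the expected dimension $6(c-1+\alpha)$; irreducibility of this component follows because all the bundles obtained by the recursive construction deform to one another within this smooth locus. The main technical obstacle is verifying the base case, and in particular $\Ext^2(\sE_0,\sE_0)=0$ for the Ulrich bundle $\Omega_{S|\PP^1}(H-F)$, which requires careful bookkeeping with the cohomology of several twists of $\Omega_{S|\PP^1}$ through the sequences \eqref{eq1}--\eqref{eq2}. Once this base case is in hand, the rest of the proof reduces to a mechanical iteration of Construction \ref{deformation}.
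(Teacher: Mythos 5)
Your proposal is correct and follows essentially the same route as the paper: recursive application of Construction \ref{deformation} to the twisted Ulrich bundle $\Omega_{S|\PP^1}(H-F)$, which realizes the base case $\alpha=1-c$ with $\Ext^1(\sE_0,\sE_0)=0$ (a vanishing that is most cleanly deduced from $\Ext^2=\Ext^3=0$, simplicity and Riemann--Roch rather than from ``uniqueness implies rigidity''). The only genuine difference is that you obtain $\dim\Ext^1(\sE,\sE)=6(c-1+\alpha)$ by accumulating the per-step change $\chi(\sE_{i+1},\sE_{i+1})=\chi(\sE_i,\sE_i)-\chi(\sE_i,\sO_L)-\chi(\sO_L,\sE_i)+\chi(\sO_L,\sO_L)=\chi(\sE_i,\sE_i)-6$, whereas the paper computes it in one shot by Riemann--Roch applied to $\sE\otimes\sE^\vee$; note that your values $\chi(\sE_i,\sO_L)=\chi(\sO_L,\sE_i)=3$ (hence a jump of $6$ per step) are the ones consistent with the stated formula, while the text of Construction \ref{deformation} asserts $\chi(\sE,\sO_L)=1$ and a jump of $4$ --- an internal arithmetic slip there that your bookkeeping silently corrects.
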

\begin{proof}
The only thing left to compute is the dimension of $\Ext_X^1(\sE,\sE)$. It follows directly from Riemann-Roch recalling that $c_1(\sE\otimes\sE^\vee)=c_3(\sE\otimes \sE^\vee)=0$ and $c_2(\sE\otimes\sE^\vee)=4c_2(\sE)-c_1^2(\sE)$.
\end{proof}

\begin{remark}
Construction \ref{deformation} it is a useful tool to show the existence of instanton bundles of every charge by starting with a known one. In particular we are able to increase the degree of the $c_2(E)$ by one at each step by increasing the coefficient of the class $HF$. However, as we will see in the next sections, it could also allow us to prove the existence of instanton bundle for a much larger set of values of $c_2(\sE)$ by starting Construction \ref{deformation} with instanton bundle for which the $H^2$ term in the second Chern class has coefficient arbitrarly high.
\end{remark}

\section{The case $a_0=a_1$}
In this section we consider rational normal scrolls $S(a_0,a_1,a_2)$ of the form $S(a,a,b)$ with $a\leq b$ and $2a+b=c$.
In this cases $S(a,a,b)\cong \pp(\sO_\p(a) \oplus \sO_\p(a) \oplus \sO_\p(b))\to \p$.

Let us consider the subbundle $\sO_\p(a) \oplus \sO_\p(a)\subset \sO_\p(a) \oplus \sO_\p(a) \oplus \sO_\p(b)$. Then $Q\cong \pp(\sO_\p(a)\oplus \sO_\p(a))$ is naturally a subvariety of $S(a,a,b)$ which is isomorphic to $\p\times\p$. By \cite[Proposition 9.13]{EH2} the class of $Q$ inside $A^1(S)$ is given by $H-bF$ and $\mathcal{N}_{Q|S} \cong \sO_S(H-bF)_{|_Q}$.

$Q$ is isomorphic to $\pp(\sO_\p\oplus \sO_\p)$ embedded via $C_0+af$ where $C_0$ and $f$ represent the two rulings of $Q$. In particular $C_0$ is a rational normal curve represented in $A^2(S)$ by the class $H^2-(a+b)HF$.  Let us denote by $\Gamma(S)$ the Hilbert scheme of such curves. Notice that $\Gamma(S)$ is isomorphic to $\p$ since $|C_0|$ represents one of the two rulings of $Q$.

These curves will be the main characters in the construction of instantons. Let us recall the Serre's correspondence between two-codimensional subschemes of $S$ and rank two vector bundles supported on $S$ itself.

\begin{proposition}\label{HScorr}
Let $X$ be a projective variety and let $Z\subseteq X$ be a local complete intersection subscheme of codimension 2. If $\det (\sN_{Z|X} )\cong \sO_Z \otimes \sL$ for some $\sL\in \Pic(S)$ such that $h^2(X,\sL^\vee)=0$, then there exists a vector bundle $F$ of rank two on $X$ such that:
\begin{itemize}
\item[(i)] $\det(F)\cong \sL$;
\item[(ii)] $F$ has a section $s$ such that $Z$ coincides with the zero locus $(s)_0$ of $s$.
\end{itemize}
Moreover, if $H^1(X,\sL^\vee)=0$, the above two conditions determine $F$ up to isomorphism.
\end{proposition}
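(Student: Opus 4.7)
The plan is to follow the classical Hartshorne--Serre construction: build $F$ as the extension of $\sI_{Z|X}\otimes\sL$ by $\sO_X$ associated to a carefully chosen class in $\mathrm{Ext}^1(\sI_{Z|X}\otimes\sL,\sO_X)$, and then check that the hypothesis on $\det\sN_{Z|X}$ guarantees that this extension is locally free.

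First I would analyze $\mathrm{Ext}^1(\sI_{Z|X}\otimes \sL,\sO_X)$ via the local-to-global spectral sequence, obtaining the exact sequence
\begin{equation*}
0\to H^1(X,\sL^\vee)\to \mathrm{Ext}^1(\sI_{Z|X}\otimes\sL,\sO_X)\xrightarrow{\rho} H^0(X,\mathcal{E}xt^1(\sI_{Z|X}\otimes\sL,\sO_X))\to H^2(X,\sL^\vee).
\end{equation*}
Since $Z$ is a local complete intersection of codimension $2$, the fundamental local isomorphism yields $\mathcal{E}xt^1(\sI_{Z|X},\sO_X)\cong \det\sN_{Z|X}$, so twisting gives $\mathcal{E}xt^1(\sI_{Z|X}\otimes\sL,\sO_X)\cong \det\sN_{Z|X}\otimes\sL^\vee$. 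By the hypothesis $\det\sN_{Z|X}\cong\sL\otimes\sO_Z$ this sheaf is isomorphic to $\sO_Z$, and the vanishing $h^2(X,\sL^\vee)=0$ makes $\rho$ surjective. Therefore the section $1\in H^0(Z,\sO_Z)$ lifts to a class $\xi\in\mathrm{Ext}^1(\sI_{Z|X}\otimes\sL,\sO_X)$, giving an extension
\begin{equation*}
0\to \sO_X\to F\to \sI_{Z|X}\otimes\sL\to 0.
\end{equation*}

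Next I would verify the two conclusions. Taking determinants in the extension immediately gives $\det F\cong \sL$, and the inclusion $\sO_X\hookrightarrow F$ provides a section $s\in H^0(X,F)$ whose zero locus is exactly the support of the cokernel $\sI_{Z|X}\otimes\sL\to\sL$, namely $Z$. The crucial point is that $F$ is locally free: this is equivalent to the extension class being a \emph{local} generator of $\mathcal{E}xt^1$ at every point of $Z$, which is precisely the statement that $\rho(\xi)$ equals the nowhere-vanishing section $1\in H^0(Z,\sO_Z)$. Away from $Z$ the sheaf $\sI_{Z|X}\otimes\sL$ is already locally free, so $F$ is locally free there as well; this is the main technical step and the one I expect to need the most care, since it rests on identifying the local Ext class with the trivializing section of $\det\sN_{Z|X}\otimes\sL^\vee$.

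Finally, for uniqueness under the extra assumption $H^1(X,\sL^\vee)=0$, the exact sequence above shows that $\rho$ becomes injective, so $\xi$ is determined up to a nonzero scalar by the condition $\rho(\xi)=1$. Two such classes differing by a scalar yield isomorphic extensions, hence $F$ is unique up to isomorphism. The main obstacle in the whole argument is the locally-free check, which is handled by translating the surjectivity of $\rho$ onto the generator of $\sO_Z$ into the statement that, locally around each point of $Z$, the extension class equals a unit times the generator of $\mathcal{E}xt^1(\sI_{Z|X},\sO_X)$; once this is in place all remaining verifications are formal.
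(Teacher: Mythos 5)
Your outline is the classical Hartshorne--Serre construction, which is precisely the argument of the reference (Arrondo) that the paper cites in place of a proof: the local-to-global $\mathrm{Ext}$ sequence with $\mathcal{H}om(\sI_{Z|X}\otimes\sL,\sO_X)\cong\sL^\vee$, the identification $\mathcal{E}xt^1(\sI_{Z|X},\sO_X)\cong\det\sN_{Z|X}$ for a codimension-two l.c.i., the lifting of $1\in H^0(Z,\sO_Z)$ granted by $h^2(X,\sL^\vee)=0$, the local-freeness criterion via the extension class generating $\mathcal{E}xt^1$ at every point of $Z$, and the uniqueness from $H^1(X,\sL^\vee)=0$. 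The proposal is correct and takes essentially the same route as the paper's (cited) proof.
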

\begin{proof}
See \cite{arrondo}.
\end{proof}

Let us begin by showing how the curves $H^2-(a+b)HF$ are related to the unique stable rank two Ulrich bundle on $S$, namely $\Omega_{S|\p}(2H-F)$.
\begin{lemma}\label{sectionulrich}
$\Omega_{S|\p}(2H-(a+b)F)$ has a section vanishing in pure codimension two over a rational normal curve in the class $H^2-(a+b)HF$.
\end{lemma}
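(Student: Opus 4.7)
The plan is to exhibit such a section by restricting to the subsurface $Q=\pp(\sO_\p(a)\oplus\sO_\p(a))\cong\p\times\p\subset S$, of class $H-bF$. First I would collect the relevant data on $Q$: setting $h_1=F|_Q$ (the fiber of $\pi|_Q$) and $h_2$ the other ruling, one has $H|_Q=ah_1+h_2$, so any $C_0\in|h_2|$ on $Q$ is a smooth $\p$ with $H\cdot C_0=a$ and $F\cdot C_0=1$, hence a rational normal curve of degree $a$ whose class in $S$ is $H^2-(a+b)HF$. A direct Chern class computation from \eqref{eq1} gives $c_1(\Omega_{S|\p}(2H-(a+b)F))=H-bF$ and $c_2(\Omega_{S|\p}(2H-(a+b)F))=H^2-(a+b)HF=[C_0]$, so the invariants are in place.

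To build the section, I would compute $\sN_{Q|S}^\vee=\sO_Q((b-a)h_1-h_2)$ and $\Omega_{Q|\p}=\sO_Q(-2h_2)$ (the latter from the relative Euler sequence of $Q\to\p$), so that the relative cotangent sequence for $Q\subset S$ over $\p$, twisted by $\sO_S(2H-(a+b)F)|_Q=\sO_Q((a-b)h_1+2h_2)$, reads
\[
0\to \sO_Q(h_2)\to \Omega_{S|\p}(2H-(a+b)F)|_Q\to \sO_Q((a-b)h_1)\to 0.
\]
A nonzero section of $\sO_Q(h_2)$ vanishing on a chosen $C_0\in|h_2|$ maps to a section $\tilde\sigma$ of $\Omega_{S|\p}(2H-(a+b)F)|_Q$. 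Tensoring the ideal sheaf sequence of $Q\subset S$ by $\Omega_{S|\p}(2H-(a+b)F)$ and using $H^1(S,\Omega_{S|\p}(H-aF))=0$ from Remark \ref{rem0}(ii), the restriction map on global sections is surjective, so $\tilde\sigma$ lifts to some $s\in H^0(S,\Omega_{S|\p}(2H-(a+b)F))$ whose zero locus contains $C_0$.

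The main obstacle is the last step: showing $Z(s)$ has no divisor component, so that $Z(s)=C_0$ scheme-theoretically. Since $\Omega_{S|\p}(2H-F)$ is the unique $\mu$-stable rank two Ulrich bundle on $S$, its twist $\Omega_{S|\p}(2H-(a+b)F)$ is $\mu$-stable of slope $a$. If $\sO_S(\alpha H+\beta F)\hookrightarrow \Omega_{S|\p}(2H-(a+b)F)$ with $\alpha H+\beta F$ effective, then $\alpha\geq 0$ and $\alpha c+\beta<a$; since $c=2a+b\geq 3a$, the case $\alpha\geq 1$ is incompatible with the effective cone, leaving $\alpha=0$ and $1\leq\beta\leq a-1$. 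This remaining case is excluded by Remark \ref{rem0}(iii):
\[
H^0(\Omega_{S|\p}(2H-(a+b+\beta)F))=H^0(\sO_S(-\beta F))^{\oplus 2}\oplus H^0(\sO_S((a-b-\beta)F))=0,
\]
for $\beta\geq 1$ and $a\leq b$. Hence $Z(s)$ is pure of codimension two with class $[C_0]$; containing a subscheme of the same class, it must equal $C_0$, giving the desired section.
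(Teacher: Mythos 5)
Your proof is correct, but it follows a genuinely different route from the paper's. The paper argues directly on $S$: it computes $h^0(S,\Omega_{S|\p}(2H-(a+b)F))$ from the resolution \eqref{eq2}, takes a (generic) section, asserts purity of its zero locus from the vanishing of $H^0$ after twisting down by effective divisors, and then reads off the class of $Y$ as $c_2$ of the bundle; it never identifies $Y$ with an explicit curve. You instead build the curve first, as a ruling $C_0\subset Q=\pp(\sO_\p(a)\oplus\sO_\p(a))$, produce the section by feeding a section of $\sO_Q(h_2)$ through the relative conormal sequence of $Q\subset S$ over $\p$ and lifting it to $S$ via the vanishing $H^1(S,\Omega_{S|\p}(H-aF))=0$ of Remark \ref{rem0}(ii), and only then invoke purity (via $\mu$-stability of the Ulrich twist plus the effective cone, and the explicit vanishing from Remark \ref{rem0}(iii)) together with a cycle-class comparison to conclude $Z(s)=C_0$. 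Your version buys two things: it exhibits the zero locus as an explicit smooth rational normal curve lying in a ruling of $Q$ (which is precisely what Construction \ref{disjointrational} needs in order to choose pairwise disjoint such curves), and it sidesteps the dimension count $h^0=1$ claimed in the paper's proof --- a count which, by Remark \ref{rem0}(iii), actually gives $h^0(S,\Omega_{S|\p}(2H-(a+b)F))=2+\delta_{a,b}\geq 2$, so the paper's "generic section" phrasing is doing more work than it admits. The cost is length: your argument needs the geometry of $Q$, the relative conormal sequence, and the lifting step, where the paper's (once the $h^0$ count and the purity assertion are repaired) is essentially a two-line Chern class computation.
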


\begin{proof}
From sequence \eqref{eq2} tensored by $\sO_S(-(a+b-1)F))$ we obtain $h^0(S,\Omega_{S|\p}(2H-(a+b)F))=1$. Take a generic section $s\in H^0(S,\Omega_{S|\p}(2H-(a+b)F))$, and let us denote by $Y=(s)_0$ locus where $s$ vanishes. Observe that $Y$ has pure codimension two because $H^0(S,\Omega_{S|\p}(2H-(a+b)F)\otimes \sO_S(D))=0$ for any effective divisor $D$. Thus we have that $\Omega_{S|\p}(2H-(a+b)F)$ fits in the short exact sequence
\[
0\to\sO_S\to\Omega_{S|\p}(2H-(a+b)F)\to \sI_Y(H-bF)\to0.
\]
To compute the class of $Y$ it is enough to compute $c_2(\Omega_{S|\p}(2H-bF))$.
\end{proof}
Now we inductively construct instanton bundles starting from the (twisted) Ulrich bundle $\Omega_{S|\p}(H-F)$.
\begin{construction}\label{disjointrational}
Let us consider $C_1,C_2,\dots,C_\alpha$ disjoint rational normal curves from the family $H^2-(a+b)HF$. First of all observe that we can choose such curves so that they are all pairwise disjoint, since they are curves in one of the ruling of $\pp(\sO_\p(a)\oplus \sO_\p(a))\subset S(a,a,b)$. Let us denote by Y the one-dimensional scheme
\begin{equation}\label{0locus2}
Y=\bigcup_{j=i}^{\alpha}{C_i}
\end{equation}
We claim that that $\det \mathcal{N}_{Y/S}\cong \sO_S(H-(a+b)F)\otimes \sO_Y$. We can verify such an isomorphism component by component. As a consequence of Lemma 	\ref{sectionulrich} we have that $\sN_{{C_i}|S}\cong \Omega_{S|\p}(2H-(a+b)F)\otimes\sO_{C_i}$, thus restricting \eqref{eq2} to $C_i$ we obtain $\sN_{C_i|S}\cong\sO_\p(a-b) \oplus \sO_\p$. In particular
\[
\det \sN_{C_i/S} \otimes \sO_{C_i} \cong \sO_S(H-bF) \otimes \sO_{C_i}.
\]
 i.e. the determinant of the normal bundle of $Y$ is extendable on $S$. Since $h^i(S,\sO_S(-H+bF))=0$ for all $i$, it follows from Proposition \ref{HScorr} that there exists a unique vector bundle (up to isomorphism) $\sF$ on $S$ with a section $s$ vanishing exactly along $Y$ and with $c_1(\sF)=H-bF$ and $c_2(\sF)=Y$.
Thus $\sE=\sF(-H+(a+b-1)F)$ has $c_1(\sE)=-H+(c-2)F$ and $c_2(\sE)=\alpha H^2-(\alpha(a+b)+a-1)HF$ and it fits into
\begin{equation}\label{serreinsta}
0\rightarrow \sO_X(-H+(a+b-1)F) \rightarrow \sE \rightarrow \sI_{Y|S}\bigl((a-1)F\bigr) \to 0.
\end{equation}
\end{construction}
\begin{proposition}\label{serreexistence2}
Let $\sE$ be a vector bundle with $c_1(\sE)=-H+(c-2)F$, $c_2(\sE)=\alpha H^2-(\alpha(a+b)+a-1)HF$ with $\alpha\geq 1$. Suppose $\sE(H-(a+b-1)F)$ has a section vanishing along $Y$ as in Construction \ref{disjointrational}, i.e. $E$ fits into
\begin{equation}\label{serreinsta2}
0\rightarrow \sO_S(-H+(a+b-1)F) \rightarrow \sE \rightarrow \sI_{Y|S}\bigl((a-1)F\bigr) \to 0.
\end{equation}
Then $\sE$ is a $\mu$-stable instanton bundle with charge $k=a(\alpha-1)+1$ such that $\sE_L\cong \sO_\p \oplus \sO_\p(-1)$ for the generic line L in the fibers.
\end{proposition}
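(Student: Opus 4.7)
The plan is to read off each instanton condition and the splitting type directly from the defining sequence \eqref{serreinsta2} plus the geometry of $Y$. The charge is immediate: $k = c_2(\sE)\cdot H = \alpha c - (\alpha(a+b)+a-1) = \alpha(c-a-b) - a + 1 = a(\alpha-1)+1$ since $c = 2a+b$. For $h^0(\sE)=0$ I take the long exact sequence of \eqref{serreinsta2}: the left end $H^0(\sO_S(-H+(a+b-1)F))$ vanishes by Lemma \ref{lem0}(ii), and for the right end I note that each $C_i$ satisfies $C_i\cdot H = a$ and $C_i\cdot F = 1$, so each $C_i$ meets every fiber in one point and cannot be contained in any union of fibers; hence $H^0(\sI_{Y|S}((a-1)F)) = 0$. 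For the instantonic vanishing $h^1(\sE(-H))=0$ I twist \eqref{serreinsta2} by $\sO_S(-H)$: the left term $\sO_S(-2H+(a+b-1)F)$ is fully acyclic (Lemma \ref{lem0}(ii)), and for $\sI_{Y|S}(-H+(a-1)F)$ I use $0\to\sI_{Y|S}\to\sO_S\to\sO_Y\to 0$, together with $\sO_S(-H+(a-1)F)\otimes\sO_{C_i}\cong\sO_{\p}(-1)$, which kills all cohomology of both ambient pieces.

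For $\mu$-stability I would apply Hoppe's criterion in the form stated in Remark \ref{rem}, reducing to showing $H^0(\sE(\gamma H + \delta F)) = 0$ for every $(\gamma,\delta)\in\ZZ^2$ with $\gamma c + \delta \le 1$. Twisting \eqref{serreinsta2} by $\sO_S(\gamma H + \delta F)$ reduces this to the simultaneous vanishing of $H^0(\sO_S((\gamma-1)H+(a+b-1+\delta)F))$ and $H^0(\sI_{Y|S}(\gamma H+(a-1+\delta)F))$. For the left term I invoke Lemma \ref{lem0}(ii)--(iii) when $\gamma\le 0$, and for $\gamma\ge 1$ I translate the question to $H^0(\p,\mathrm{Sym}^{\gamma-1}\sD(a+b-1+\delta))$ and check that the bound $\delta\le 1-\gamma c$ forces every summand to have strictly negative degree. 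For the right term I use the inclusion $\sI_{Y|S}(\gamma H+(a-1+\delta)F)\subseteq \sO_S(\gamma H+(a-1+\delta)F)$, which is acyclic in $H^0$ by the same cohomological computation except at the boundary case $\gamma = 0$, $0\le\delta\le 1$; in that range the ambient line bundle may have sections, but I conclude $h^0 = 0$ from the geometric fact (already used above) that $Y$ is not contained in any element of $|(a-1+\delta)F|$.

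Finally, for the splitting type on a generic line $L\in |HF|$: since $Y$ is one-dimensional and the family of fiber lines is three-dimensional, the subfamily meeting $Y$ has codimension at least one, so the generic $L$ is disjoint from $Y$. Restricting \eqref{serreinsta2} to such an $L$ (on which $L\cdot H = 1$ and $L\cdot F = 0$) yields
\[
0\to \sO_{\p}(-1)\to \sE_L \to \sO_{\p}\to 0,
\]
which splits because $\Ext^1(\sO_{\p},\sO_{\p}(-1)) = 0$, giving $\sE_L\cong \sO_{\p}\oplus \sO_{\p}(-1)$ as required.

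The main obstacle is the stability verification: Hoppe's criterion in Picard rank two has infinitely many test line bundles, and at the boundary lattice points ($\gamma = 1$ with $\delta = 1-c$, and $\gamma = 0$ with $\delta \in \{0,1\}$) the flanking line bundles do carry sections, so one must really exploit the geometric positioning of $Y$ in $Q\cong\p\times\p\subset S$ — in particular the fact that $C_i\cdot F = 1$ prevents $Y$ from sitting inside any divisor of the form $mF$, and the rank-2 symmetric-power computations on $\p$ handle the remaining cases uniformly.
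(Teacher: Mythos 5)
Your proof is correct, and the argument for $h^0(\sE)=0$, for the instantonic vanishing, and for the generic splitting type (restricting the Serre sequence to a fiber line disjoint from $Y$ and using $\Ext^1_{\PP^1}(\sO_{\PP^1},\sO_{\PP^1}(-1))=0$) matches the paper's in substance. Where you genuinely diverge is the stability step. The paper does \emph{not} run Hoppe's criterion on the Serre sequence: it argues by induction on $\alpha$, writing $Y=Y'\cup C_{t+1}$ and producing via the snake lemma an exact sequence $0\to\sE\to\sE'\to\sO_{C_{t+1}}((a-1)F)\to0$ relating the bundle for $\alpha=t+1$ curves to the one for $t$ curves; a destabilizing sub-line bundle of $\sE$ would then destabilize $\sE'$ (same slope), and the base case $\alpha=1$ is the known $\mu$-stability of the twisted Ulrich bundle $\Omega_{S|\PP^1}(2H-F)$ from \cite{AHMP}. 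Your direct verification of Hoppe's criterion is self-contained (it does not import the stability of $\Omega_{S|\PP^1}$) and your degree estimates check out: for $\gamma\ge1$ the bound $\delta\le1-\gamma c$ forces the top summand of $\mathrm{Sym}^{\gamma-1}\sD(a+b-1+\delta)$, resp.\ $\mathrm{Sym}^{\gamma}\sD(a-1+\delta)$, to have degree $a(1-2\gamma)<0$, and at $\gamma=0$ the vanishing $H^0(\sI_Y(mF))=0$ follows from $C_i\cdot F=1$. (One small inaccuracy in your closing commentary: at $\gamma=1$, $\delta=1-c$ the flanking line bundles are $\sO_S(-aF)$ and $\sO_S(H-(a+b)F)$, both of which already have no sections for degree reasons, so the geometric input on $Y$ is only needed at $\gamma=0$.) The trade-off is that the paper's inductive sequence simultaneously delivers the restriction statement on fiber lines by induction, whereas you obtain that statement independently and arguably more cleanly from the Serre sequence itself.
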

\begin{proof}
First we check that $\sE$ has no sections. Indeed taking the cohomology of \eqref{serreinsta2} we obtain $H^0(S,\sE)\cong H^0(Y,\sI_{Y|S}\bigl((a-1)F\bigr))$. However $Y$ is not contained in any surface in the class $(a-1)F$. Indeed $Y$ consists of disjoint union of rational normal curves intersecting transversely each fiber $F$. But the surface $(a-1)F$ must be the disjoint union of $a-1$ fibers. Thus $Y \not\subset (a-1)F$ and $H^0(Y,\sI_Y((a-1)F))=0$.

Now we prove that $\sE$ satisfies the instantonic condition. If we tensor \eqref{serreinsta} by $\sO_S(-H)$ we obtain $H^i(S,\sE(-H))\cong H^i(Y,\sI_{Y|S}\bigl(-H+(a-1)F\bigr))=0$ for all $i$.

To prove the $\mu$-semistability and the behaviour of $\sE$ when restricted to lines in the family $HF$ we proceed by induction on $\alpha$.

If $\alpha=1$ then by Lemma \ref{sectionulrich} the vector bundle $\sE$ is a twist of the rank two Ulrich bundle $\Omega_{S|\p}(2H-F)$, which is $\mu$-stable (see \cite{AHMP}). Now suppose that for $\alpha=t$ Construction \ref{disjointrational} yields a $\mu$-stable bundle and we prove it for $\alpha=t+1$. Now let $Y'=\bigcup_{i=1}^{t}{C_i}$, so that $Y=Y' \cup C_{t+1}$ where $C_i$ are rational normal curves in the class $H^2-(a+b)HF$ and they are pairwise disjoint. Let us denote by $\sE'$ the vector bundle associated to $Y'$ via Construction \ref{disjointrational}. Thus we have the two natural short exact sequences
\begin{gather}
0 \to \sO_{Y'} \to \sO_Y \to \sO_{C_{t+1}}\to 0  \label{natural1}\\
0 \to \sI_{Y|S} \to \sI_{Y'|S} \to \sO_{C_{t+1}} \to 0. \label{natural2}
\end{gather}
In particular there exists a surjective map $\phi$
\[
\sI_{Y'|S}\bigl((a-1)F\bigr)\xrightarrow{\phi}\sO_{C_{t+1}}\bigl((a-1)F\bigr) \to 0.
\]
Now let us compose this map with the surjective map in \eqref{serreinsta}
\begin{equation}
\sE' \xrightarrow{\psi} \sI_{Y'|S}\bigl((a-1)F\bigr)\xrightarrow{\phi}\sO_{C_{t+1}}\bigl((a-1)F\bigr) \to 0.
\end{equation}
Since both $\phi$ and $\psi$ are surjective by the Snake's lemma we have a short exact sequence
\[
0 \to \ker \psi \to \ker (\phi  \psi) \to \ker \phi \to 0,
\]
which becomes
\begin{equation}
0\to  \sO_S(-H+(a+b-1)F) \to  \ker (\phi  \psi) \to \sI_{Y|S}\bigl((a-1)F\bigr)\to0.
\end{equation}
In particular $\sE=\ker (\phi  \psi)$ is the unique vector bundle associated to $Y$ via Construction \ref{disjointrational}. Thus we have the following exact sequence
\begin{equation}\label{finalses}
0 \longrightarrow \sE \longrightarrow \sE' \xrightarrow{\phi \psi} \sO_{C_{t+1}}\bigl((a-1)F\bigr) \longrightarrow 0.
\end{equation}
If there exists a line bundle destabilizing $\sE$, it would also destibilize $\sE'$ but this is not possible since $\sE'$ is $\mu$-stable by inductive hypothesis, thus $\sE$ is $\mu$-stable. Moreover, suppose by induction on $\alpha$ that $\sE'\otimes \sO_L\cong \sO_\p \oplus \sO_\p(-1)$ for the generic line $L \in |HF|$. We checked the case $\alpha =1$ in Construction \ref{deformation}. Since every curve $C_j$ intersects the fiber trasversally, it si possible to choose $L'\in |HF|$ such that $L'\cap Y =\emptyset$. For such line, restricting \eqref{finalses} to $L'$ we obtain $\sE\otimes \sO_{L'}\cong \sO_\p \oplus \sO_\p(-1)$, but having this splitting type is an open condition, thus it must hold for the generic line.
\end{proof}
In the next Proposition we show that in the special case $b\leq a+1$ we have a nice description of the component of the moduli space of $\mu$-stable bundles where such bundles sit.
\begin{proposition}\label{serremoduli1}
Let $S=S(a,a,b)$ be a rational normal scroll with $b\leq a+1$. For each $\alpha\geq1$ there exists an irreducible component
\[
\MI_S^{\ 0}\bigl(\alpha H^2-(\alpha(a+b)+a-1)HF\bigr) \subseteq \MI_S\bigl(\alpha H^2-(\alpha(a+b)+a-1)HF\bigr)
\]
which is generically smooth of dimension $2c-10-2K_Sc_2(\sE)$ and containing all points corresponding to the bundles obtained in Proposition \ref{serreexistence2}
\end{proposition}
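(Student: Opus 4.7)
The plan is to realize the bundles produced by Proposition \ref{serreexistence2} as an irreducible family inside $\MI_S$, show that each such bundle is a smooth point by establishing $\Ext^2_S(\sE,\sE)=0$, and then read off the dimension from Riemann--Roch.

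First I would build the parameter space $\sU$. The divisors $Q\in|H-bF|$ containing a copy of $\p\times\p$ form an irreducible family: since $b\leq a+1$, the linear system $|H-bF|$ is non-empty, and a generic member is $\pp(\sO_{\p}(a)\oplus\sO_{\p}(a))$. Inside each such $Q$ the curves of class $H^2-(a+b)HF$ sweep out one ruling, parametrised by $\p$; imposing pairwise disjointness on $\alpha$ of them is an open condition. Thus the configurations $(Q,C_1,\dots,C_\alpha)$ arising in Construction \ref{disjointrational} form a non-empty open subset $\sU$ of an irreducible variety. Proposition \ref{HScorr} applied in families produces a flat family of rank two $\mu$-stable bundles parametrised by $\sU$, hence a morphism
\[
u\colon\sU\longrightarrow \MI_S\bigl(\alpha H^2-(\alpha(a+b)+a-1)HF\bigr),
\]
whose image is irreducible and contains all the bundles furnished by Proposition \ref{serreexistence2}.

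The crucial technical step is to show $\Ext^2_S(\sE,\sE)=0$ for every $\sE=u(Y)$. Applying $\Hom(\sE,-)$ to \eqref{serreinsta2} places $\Ext^2(\sE,\sE)$ between $\Ext^2\bigl(\sE,\sO_S(-H+(a+b-1)F)\bigr)$ and $\Ext^2\bigl(\sE,\sI_{Y|S}((a-1)F)\bigr)$. Using $\sE^\vee\cong\sE(H-(c-2)F)$ the former is an $H^2$-group of an explicit twist of $\sE$, which vanishes by Lemma \ref{riv2} (the hypothesis $b\leq a+1$ keeps the twist within the vanishing range). For the latter, twist $0\to\sI_Y\to\sO_S\to\sO_Y\to 0$ by $\sO_S((a-1)F)$ and apply $\Hom(\sE,-)$: the problem reduces to the vanishing of $\Ext^2(\sE,\sO_S((a-1)F))$, again controlled by Lemma \ref{riv2}, and of $\Ext^1(\sE,\sO_Y((a-1)F))$. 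This last group is computed component by component on $Y=\bigsqcup C_i$: restriction of \eqref{serreinsta2} to each $C_i\cong\p$ identifies $\sE|_{C_i}$ as an extension of line bundles on $\p$ of explicit degrees, and the bound $b\leq a+1$ is exactly what keeps those degrees in the range forcing the desired $H^1$ to vanish.

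With $\Ext^2(\sE,\sE)=0$ in hand, $\mu$-stability gives $\Hom(\sE,\sE)=\mathbb{C}$, and $\Ext^3(\sE,\sE)=0$ follows from Serre duality together with the strict inequality $\mu(\sE\otimes\omega_S)=\mu(\sE)-2c-2<\mu(\sE)$ (which prevents any non-zero map from $\sE$ to $\sE\otimes\omega_S$). Riemann--Roch for $\sE\otimes\sE^\vee$ then yields
\[
\dim\Ext^1_S(\sE,\sE)\;=\;1-\chi(\sE\otimes\sE^\vee)\;=\;2c-10-2K_S\,c_2(\sE).
\]
Consequently every point of $u(\sU)$ is a smooth point of $\MI_S$ of this tangent dimension, so it is contained in a unique irreducible component $\MI_S^{\ 0}$ of the claimed dimension, which is moreover generically smooth. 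The main obstacle I expect is the cohomological verification of $\Ext^2=0$, specifically the control of $\Ext^1(\sE,\sO_Y((a-1)F))$: this is precisely where the hypothesis $b\leq a+1$ becomes essential, and keeping track of the splitting type of $\sE|_{C_i}$ in this calculation will require careful book-keeping.
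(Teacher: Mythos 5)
Your overall architecture --- an irreducible parameter space of disjoint curves, the vanishing $\Ext^2(\sE,\sE)=0$ at each constructed bundle, and the dimension read off from Riemann--Roch --- is exactly the paper's, and your reduction of $\Ext^2(\sE,\sE)$ to the two groups $\Ext^2\bigl(\sE,\sO_S(-H+(a+b-1)F)\bigr)=H^2\bigl(S,\sE((1-a)F)\bigr)$ and $\Ext^2\bigl(\sE,\sI_{Y|S}((a-1)F)\bigr)$ is the correct one. The gap is in how you kill these groups. Lemma \ref{riv2} covers neither: for twists of the form $\sE(bF)$ it only gives $h^2=0$ for $b=0,-1$ (part (iv)), so $H^2\bigl(S,\sE((1-a)F)\bigr)$ falls outside its range as soon as $a\geq 3$; and it says nothing about $h^2$ of twists of the form $\sE(H+bF)$, which is exactly what $\Ext^2\bigl(\sE,\sO_S((a-1)F)\bigr)=H^2\bigl(S,\sE(H-(a+b-1)F)\bigr)$ is. The hypothesis $b\leq a+1$ cannot rescue these citations, since Lemma \ref{riv2} holds for an arbitrary instanton bundle and its ranges do not move with $a$ and $b$.

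The paper closes both vanishings by returning to the Serre sequence defining $\sE$ rather than to Lemma \ref{riv2}: twisting it by $\sO_S((1-a)F)$ identifies $H^2\bigl(S,\sE((1-a)F)\bigr)$ with $H^2(S,\sI_{Y|S})\cong H^1(Y,\sO_Y)=0$ because $Y$ is a disjoint union of smooth rational curves, and twisting it by $\sO_S(H-(a+b-1)F)$ identifies $H^2\bigl(S,\sE(H-(a+b-1)F)\bigr)$ with $H^2\bigl(S,\sI_{Y|S}(H-bF)\bigr)\cong H^1\bigl(Y,\sO_Y(H-bF)\bigr)$, which vanishes because $\sO_S(H-bF)$ restricts to degree $a-b\geq -1$ on each component. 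This, together with $h^1(Y,\sN_{Y/S})=h^1\bigl(\p,\sO_\p(a-b)\oplus\sO_\p\bigr)=0$ in your final term, is where $b\leq a+1$ actually enters --- so it is needed in two places, not only the one you identify. A further small inaccuracy: restricting the Serre sequence to a component $C_i\subset Y$ does not exhibit $\sE|_{C_i}$ as an extension of line bundles, since $\sI_{Y|S}\otimes\sO_{C_i}$ is the rank-two conormal bundle; the clean statement to use is $\sE(H-(a+b-1)F)\otimes\sO_Y\cong\sN_{Y/S}$. With these repairs your argument coincides with the paper's.
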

\begin{proof}
Let us begin by computing $\Ext^i(\sE,\sE)$. Since $\sE$ is $\mu$-stable, it is simple. Hence we have $\Ext^3(\sE,\sE)=0$ and $\Hom(\sE,\sE)\cong \mathbb{C}$. Now we show $\Ext^2(\sE,\sE)=0$. Take the short exact sequence \eqref{serreinsta} and tensor it by $\sE^\vee\cong \sE (H-(2a+b-2)F)$. Now taking cohomology we have
\[
H^2(S,\sE((1-a)F))\to \Ext_X^2(\sE,\sE)\to H^2(S,\sE \otimes \sI_{Y|X}(H-(a+b-1)F)).
\]
We show that both $H^2(S,\sE((1-a)F))$ and $H^2(S,\sE \otimes \sI_{Y|X}(H-(a+b-1)F))$ are zero. Take the cohomology of the short exact sequence \eqref{serreinsta} tensorized by $\sO_S((1-a)F)$. We obtain $H^2(S,\sE((1-a)F))\cong H^2(S, \sI_{Y|S})\cong H^1(Y,\sO_Y)\cong 0$ because $Y$ is the disjoint union of smooth rational curves. It remains to show that $H^2(S,\sE \otimes \sI_{Y|X}(H-(a+b-1)F))\cong 0 $.

In order to do so let us take the short exact sequence \eqref{serreinsta} and tensorize it by $\sO_S(H-(a+b-1)F)$. Taking cohomology we obtain $h^2(S,\sE(H-(a+b-1)F))=h^2(X,\sI_{Y|S}(H-bF))$. Now if we tensorize the short exact sequence
\begin{equation}\label{sesideal}
0\to\sI_{Y|S}\to\sO_S\to\sO_Y\to0
\end{equation}
by $\sO_S(H-bF)$ and we take cohomology we have $h^2(X,\sI_{Y|S}(H-bF))=h^1(Y,\sO_X(H-bF)\otimes \sO_Y)=0$ since $\sO_S(H-bF)$ restricts to each component of $Y$ to a degree $a-b$ line bundle. By hypothesis $a-b\geq 1$, thus we have $h^2(S,\sE(H-(a+b-1)F)=0$. Now if we take the cohomology of \eqref{sesideal} tensorized by $\sE(H-(a+b-1)F)$ we have
\[
h^2(S,\sE(H-(a+b-1)F)\otimes \sI_{Y|S})\leq h^1(Y,\sE(H-(a+b-1)F\otimes\sO_Y).
\]
But now using the fact that $\sE(H-(a+b-1)F\otimes \sO_Y\cong \sN_{Y/S}\cong\sO_\p(a-b)\oplus \sO_\p$ we have $h^1(Y,\sE(H-(a+b-1)F\otimes\sO_Y))=0$ and thus $h^2(X,\sE(H-(a+b-1)F)\otimes \sI_{Y|S})$. Finally we obtain $\Ext_X^2(\sE,\sE)=0$ and the assertion on the dimension of $\Ext_X^1(\sE,\sE)$ follows from Riemann-Roch, since $\sE$ is simple and $\chi (\sE \otimes \sE^\vee)=2K_Sc_2(\sE)-2c+10$.

The schemes as in \eqref{0locus2} represent points in a non-empty open subset $\mathcal{U}\subset\Gamma^{\times \alpha}(S)$.The latter product is a product of irreducible varieties, thus it is irreducible. It follows that $\sU$ is irreducible as well.

Since the vector bundle $\sE$ in Sequence \eqref{serreinsta} is uniquely determined by the scheme $Y$, we obtain in this way a flat family of bundles containing all the bundles obtained via Proposition \ref{serreexistence2} and parameterized by $\sU$. Thus all the vector bundles constructed via Construction \ref{disjointrational} lie in the same component $\MI_S^{0}(c_2(\sE)) \subseteq \MI_S(c_2(\sE))$.
\end{proof}
Combining Construction \ref{deformation} and Proposition \ref{serreexistence2} we are finally able to prove the following theorem
\begin{theorem}\label{esistenzatotale}
Let $S=S(a,a,b)$ be a rational normal scroll with $b\leq a+1$. Then for any $k_1\geq0$ and $k_2\geq 1-ck_1+a(k_1-1)$ there exists a $\mu$-stable instanton bundle with $c_2(\sE)=k_1H^2+k_2HF$ which is generically trivial on lines in the fibers and such that
\[
\Ext_X^1(\sE,\sE)=2\left(c-5+2k_1(c+1)+3k_2\right), \qquad \Ext_X^2(\sE,\sE)=\Ext_X^3(\sE,\sE)=0.
\]
In particular there exists, inside the moduli space $\MI(k_1H^2+k_2HF)$ of $\mu$-stable instanton bundles with $c_2(\sE)=k_1H^2+k_2HF$, a generically smooth, irreducible component of dimension $2\left(c-5+2k_1(c+1)+3k_2\right)$.
\end{theorem}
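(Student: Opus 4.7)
The strategy is to combine the two techniques already developed in the preceding sections: the Serre correspondence of Proposition \ref{serreexistence2} supplies the base case, and Construction \ref{deformation} then iteratively raises the $HF$-coefficient of $c_2$ while preserving $\mu$-stability. The bound $k_2 \ge 1-ck_1+a(k_1-1)$ is shaped precisely so that the base case lands on it: for each $k_1 = \alpha \ge 1$, Construction \ref{disjointrational} and Proposition \ref{serreexistence2} produce a $\mu$-stable instanton bundle $\sE_0$ with $c_2(\sE_0) = \alpha H^2 - (\alpha(a+b)+a-1)HF$, i.e.\ with $k_2 = 1-ck_1+a(k_1-1)$, and generically trivial on lines in $|HF|$.

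To launch the iterative machinery of Construction \ref{deformation} on $\sE_0$ I must also verify that $\Ext^2(\sE_0,\sE_0) = \Ext^3(\sE_0,\sE_0) = 0$; both vanishings are established in Proposition \ref{serremoduli1}. The hypothesis $b \le a+1$ enters decisively there, since it forces $\sN_{C_i|S} \cong \sO_{\p}(a-b)\oplus \sO_{\p}$ to have non-negative first summand on each rational normal curve $C_i \subset Y$, which is exactly what allows the relevant $H^1$ on $C_i$ to vanish. Since Construction \ref{deformation} explicitly preserves both generic triviality on fiber lines and the two Ext-vanishings, I would iterate it $k_2-\bigl(1-ck_1+a(k_1-1)\bigr)$ times to obtain a $\mu$-stable instanton bundle $\sE$ with $c_2(\sE) = k_1 H^2 + k_2 HF$ having all the properties in the statement.

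The dimension count is then pure Riemann--Roch. From $\Ext^2(\sE,\sE) = \Ext^3(\sE,\sE) = 0$ and simplicity of $\sE$, one has $\dim\Ext^1(\sE,\sE) = 1 - \chi(\sE\otimes\sE^\vee)$; plugging $c_1(\sE\otimes\sE^\vee)=0$ and $c_2(\sE\otimes\sE^\vee) = 4c_2(\sE)-c_1(\sE)^2$ into formula \eqref{rrgeneral}, a routine expansion yields exactly $2(c-5+2k_1(c+1)+3k_2)$. Irreducibility and generic smoothness of the claimed component follow because the base family $\{\sE_0\}$ is parametrised by an open subset of $\Gamma(S)^{\times k_1}$, hence is irreducible (as in Proposition \ref{serremoduli1}), and each step of Construction \ref{deformation} deforms flatly within a single irreducible component of the moduli space; the vanishing $\Ext^2(\sE,\sE) = 0$ gives generic smoothness at every step.

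The principal obstacle is really only the base case: once $\Ext^2(\sE_0,\sE_0) = 0$ is secured, which is exactly what the restriction $b \le a+1$ provides via the normal bundle computation above, the rest of the argument is handled uniformly and mechanically by iterating Construction \ref{deformation}, with no further case analysis on $(k_1,k_2)$ required.
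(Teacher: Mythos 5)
Your proof follows the paper's argument exactly: the paper's own proof of this theorem is precisely the observation that the bundles produced by Proposition \ref{serreexistence2} satisfy the starting hypotheses of Construction \ref{deformation}, so the two are combined, with the base case sitting at $k_2=1-ck_1+a(k_1-1)$ and each application of the construction raising $k_2$ by one. Your write-up even supplies details the paper leaves implicit (the identification of the base-case $k_2$ with the stated bound, the role of Proposition \ref{serremoduli1} in securing $\Ext^2=\Ext^3=0$, and the Riemann--Roch dimension count); the only quibble is that the summand $\sO_{\p}(a-b)$ of $\sN_{C_i|S}$ need only have degree $\geq -1$ (which is exactly what $b\leq a+1$ gives), not $\geq 0$, for the relevant $H^1$ to vanish.
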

\begin{proof}
Observe that the bundles whose existence is guaranteed by Proposition \ref{serreexistence2} satisfy all the starting hypothesis of Construction \ref{deformation}. Thus the theorem follows by combining Proposition \ref{serreexistence2} and Construction \ref{deformation}.
\end{proof}

\section{Some cases with $a_0<a_1$}
In this section we deal with the existence of instanton bundles on $S=S(a_0,a_1,a_2)$ where $a_0<a_1$ and $a_0+a_1+a_2=c$. In this section we will prove the existence of instanton bundles on $S$, given some conditions on the invariants $a_i$'s. Let us start by describing the general hyperplane section of $S$. We will first recall the following definition.

\begin{definition}\cite[Definition 2.1]{Br}\label{relative}
The index of relative balance of a three-dimensional scroll $S(a_0,a_1,a_2)$ denoted $r(S)$ is given by
\[
r(S)=\min\left\{k\in\mathbb{N}: (3-k)a_{3-k}\leq\sum_{i=k}^3{a_{3-i}}\right\}.
\]
\end{definition}

Thus we are able to describe the general hyperplane section of $S$.

\begin{theorem}\cite[Theorem 2.4]{Br}\label{hsection}
Let $S=S(a_0,a_1,a_2)$ be a 3-dimensional rational normal scroll with index of relative balance $r(S)=r$. Then a general hyperplane section of $S$ is a 2-dimensional scroll with invariants $b_0\leq b_1$ satisfying
\[
b_0+b_1=a_0+a_1+a_2=c
\]
and the following conditions:
\begin{itemize}
\item $b_{2-r}\leq b_0+1$.
\item $b_{2-i}=a_{3-i}$ for all $1\leq i \leq r-1$.
\end{itemize}
\end{theorem}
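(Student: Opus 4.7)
The plan is to reduce the computation of the invariants of a general hyperplane section to the determination of the generic splitting type of a rank-two quotient of $\sD$ on $\PP^1$, and then to match this splitting against the numerical conditions encoded by $r(S)$. First I would invoke the projection formula for $\pi \colon S = \PP(\sD) \to \PP^1$, which gives $\pi_{*}\Oo_S(H) \cong \sD$ and hence $H^{0}(S, \Oo_S(H)) \cong H^{0}(\PP^1, \sD)$. A generic section in this group corresponds to a triple $(f_0, f_1, f_2)$ of polynomials of degrees $a_0, a_1, a_2$ with no common zero on $\PP^1$, equivalently to a nowhere-vanishing subbundle inclusion $s \colon \Oo_{\PP^1} \hookrightarrow \sD$. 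The corresponding hyperplane section meets each fiber $\PP^2$ of $\pi$ in the line $\{\sum f_i(p)\xi_i = 0\}$, so it is a $\PP^1$-bundle over $\PP^1$, namely $\PP(\sD')$ with $\sD' = \sD/s(\Oo_{\PP^1})$. The invariants $b_0 \leq b_1$ are then the jumps of the splitting $\sD' \cong \Oo_{\PP^1}(b_0) \oplus \Oo_{\PP^1}(b_1)$, and $b_0 + b_1 = c$ follows immediately from taking determinants of the short exact sequence.

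The second step is to analyze which splittings can actually occur. Dualizing the short exact sequence, each summand $\Oo(-b_j) \subset \sD'^{\vee}$ embeds into $\sD^{\vee} = \bigoplus \Oo(-a_i)$; by comparing the Harder--Narasimhan filtrations one obtains constraints of the form $b_j \leq a_{i(j)}$ for suitable indices. Iteratively peeling off the top summand shows that if the $a_i$ are sufficiently spread out then some of the largest $a_i$'s are forced to persist unchanged as the top $b_j$'s. The definition of the index of relative balance $r(S)$ is precisely the numerical threshold at which this rigidity breaks down: for each $i$ with $1\leq i\leq r-1$ the defining inequality $(3-k)a_{3-k}\leq \sum_{i=k}^{3}a_{3-i}$ still fails, so the corresponding top summand of $\sD$ must survive to give $b_{2-i} = a_{3-i}$, while for $i \geq r$ the residual quotient is free enough to admit a balanced decomposition.

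The third step is a genericity argument producing the balance inequality $b_{2-r} \leq b_0 + 1$. After quotienting out the rigid top summands, the residual problem is to analyze the generic quotient of the remaining direct sum by a one-dimensional subsheaf, and on $\PP^1$ the generic such quotient is the most balanced one: this follows from Grothendieck's classification theorem combined with upper-semicontinuity of $h^0(\sD'(-n))$ in the family of quotients parameterized by $H^0(\PP^1, \sD)$. I would make this concrete by exhibiting a specific section of $\sD$ whose cokernel realizes the claimed splitting (for instance by choosing the $f_i$ as generic polynomials of the appropriate degrees and performing a Euclidean-algorithm computation on the quotient), and then invoking semicontinuity to conclude that the same splitting type holds on a dense open subset of $H^0(\PP^1, \sD)$.

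The main obstacle lies in the combinatorial bookkeeping associated with $r(S)$, specifically in verifying that the relative-balance definition exactly captures which of the $a_i$ are rigid and which are flexible. The delicate point is to construct a single nowhere-vanishing section whose quotient simultaneously realizes the rigid equalities $b_{2-i} = a_{3-i}$ for $1 \leq i \leq r-1$ and the balance bound $b_{2-r} \leq b_0 + 1$; this requires a careful inductive construction on $r$ together with a dimension count on the space of sections to ensure that the jumping locus of the splitting type does not meet a generic fiber of the family.
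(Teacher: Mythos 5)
First, a point of comparison: the paper does not prove this statement at all. Theorem \ref{hsection} is quoted verbatim from \cite[Theorem 2.4]{Br} and used as a black box, so there is no internal proof to measure your argument against; I can only assess it on its own terms. Your overall architecture is the standard and correct one. Step 1 is fine: with the Grothendieck convention $\pi_*\Oo_S(H)\cong\sD$, a general $s\in H^0(\PP^1,\sD)$ is nowhere vanishing (each $a_i>0$, so $\sD$ is globally generated), the hyperplane section is $\PP(\sD')$ with $\sD'=\sD/s(\Oo_{\PP^1})$, and $b_0+b_1=c$ follows by taking determinants. Step 3 (exhibit one section realizing the claimed splitting, then conclude by semicontinuity of the splitting type) is also the right closing move.

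The genuine problem is your step 2. From $\sD'^\vee\hookrightarrow\sD^\vee$ a nonzero component $\Oo(-b_j)\to\Oo(-a_i)$ forces $b_j\ge a_i$, not $b_j\le a_{i(j)}$ as you wrote, and the only unconditional consequence of that embedding is $b_j\ge a_0$, which is far too weak; moreover the constraint ``$b_1\le a_i$ for some $i$'' is simply false in general (for $S(2,2,2)$ the general hyperplane section is $S(3,3)$). The inequality you actually need, valid for \emph{every} nowhere-vanishing $s$, goes the other way and comes from the subbundle side: the composite $\Oo(a_2)\hookrightarrow\sD\to\sD'$ is nonzero since $\Oo(a_2)\not\subset s(\Oo_{\PP^1})$, and a nonzero map $\Oo(a_2)\to\Oo(b_0)\oplus\Oo(b_1)$ forces $b_1\ge a_2$. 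Note also that the rigid equality $b_{1}=a_{2}$ in the case $r=2$ cannot be extracted from such formal filtration arguments alone: for $S(1,1,3)$ a special nowhere-vanishing section with $f_0,f_1$ proportional already gives $b_1\ge 4>a_2$, so the upper bound $b_1\le a_2$ is itself a genericity statement and belongs to your step 3, not step 2. The clean way to finish is to observe that $b_1=\max\{n:h^0(\sD'(-n))>0\}$ and that for $n>a_2$ the long exact sequence gives $h^0(\sD'(-n))=\dim\ker\left(H^1(\Oo_{\PP^1}(-n))\to\oplus_iH^1(\Oo_{\PP^1}(a_i-n))\right)$, which by Serre duality is the corank of the multiplication map $(g_i)\mapsto\sum g_if_i$ into $\kk[s,t]_{n-2}$; for generic $f_i$ this map has maximal rank (elementary for ideals in two variables, and verifiable on explicit choices of the $f_i$), and combining the resulting vanishing with the lower bound $b_1\ge a_2$ yields exactly $b_1=\max(a_2,\lceil c/2\rceil)$, which is the content of the theorem in the rank-three case. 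With step 2 replaced by this, your outline closes.
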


Let us explicitely describe the  possible different cases. By Definition \ref{relative} it is clear that $r(S)\leq 2$ because $a_1 \leq a_1+a_0$. Indeed, in our case, $r(S)$ is either $1$ or $2$, i.e.
\begin{equation}
r(S)=\begin{sistema}
1 \quad \text{if $a_2 \leq a_0+a_1$,} \\
2 \quad \text{if $a_2>a_0+a_1$.}
\end{sistema}
\end{equation}

In particular by Theorem \ref{hsection} if $r(S)=1$ then we have that the general hyperplane section $H$ of $S$ is isomorphic to $S(b_0,b_1)$ with $b_0\leq b_1 \leq b_0+1$, i.e.
\begin{equation}
H=\begin{sistema}
S(b_0,b_0) \quad \ \ \ \ \ \text{if $c$ is even},\\
S(b_0,b_0+1) \quad \text{if $c$ is odd}.
\end{sistema}
\end{equation}
From now on we will focus on rational normal scrolls of even degree and with $r(S)=1$. In this case the general hyperplane section $H$ is isomorphic to $Q=\pp(\sO_\p \oplus \sO_\p)$ embedded as a scroll via $C_0+b_0f$, where as usual $C_0$ and $f$ represent the elements in the two rulings of $Q$ with the relations $C_0^2=f^2=0$ and $C_0f=1$. The curve $C_0$ is a rational normal curve of degree $b_0$ and its class in $A^2(S)$ is represented by $H^2-{b_0}HF$. Moreover we have the following exact sequence
\begin{equation}\label{sesc0}
0\to \sO_S(-2H+b_0F)\to \sO_S(-H)\oplus \sO_S(-H+b_0F) \to \sO_S \to \sO_{C_0} \to 0,
\end{equation}
from which we obtain $\sN_{C_0|S}\cong \sO_\p (b_0) \oplus \sO_\p$.

\begin{construction}\label{disjointc0}
Let us consider $D_1,D_2,\dots,D_\alpha$ disjoint rational normal curves from the family $H^2-b_0HF$. First of all observe that we can choose such curves so that they are all pairwise disjoint, since they are curves in one of the ruling of $\pp(\sO_\p(b_0)\oplus \sO_\p(b_0))\subset S$. Let us denote by Y the one-dimensional scheme
\begin{equation}\label{0locus3}
Y=\bigcup_{j=i}^{\alpha}{D_i}
\end{equation}
We claim that that $\det \mathcal{N}_{Y/S}\cong \sO_S(H)\otimes \sO_Y$. We can verify such an isomorphism component by component. We have that $\sN_{D_i|S}\cong\sO_\p(b_0) \oplus \sO_\p$. In particular
\[
\det \sN_{C_i/S} \otimes \sO_{C_i} \cong \sO_S(H) \otimes \sO_{C_i}.
\]
 i.e. the determinant of the normal bundle of $Y$ is extendable on $S$. Since $h^i(S,\sO_S(-H))=0$ for all $i$, it follows from Proposition \ref{HScorr} that there exists a unique (up to isomorphism) vector bundle  $\sF$ on $S$ with a section $s$ vanishing exactly along $Y$ and with $c_1(\sF)=H$ and $c_2(\sF)=Y$.
Thus $\sE=\sF(-H+(b_0-1)F)$ has $c_1(\sE)=-H+(c-2)F$ and $c_2(\sE)=\alpha H^2-(b_0(\alpha+1)-1)HF$ and it fits into
\begin{equation}\label{serreinsta2}
0\rightarrow \sO_S\left(-H+(b_0-1)F\right) \rightarrow \sE \rightarrow \sI_{Y|S}\left((b_0-1)F\right) \to 0.
\end{equation}
\end{construction}

\begin{proposition}\label{serreexistence3}
Let $\sE$ be a vector bundle on $S(a_0,a_1,a_2)$ with $a_0+a_1+a_2=2b_0$ and $a_2<a_0+a_1$ with Chern classes $c_1(\sE)=-H+(c-2)F$, $c_2(\sE)=\alpha H^2-(b_0(\alpha+1)-1)HF$ with $\alpha\geq 1$. If $\sE(H-(b_0-1)F)$ has a section vanishing along $Y$ as in Construction \ref{disjointc0}, i.e. $\sE$ fits into
\begin{equation}\label{serreinsta2}
0\rightarrow \sO_S\left(-H+(b_0-1)F\right) \rightarrow \sE \rightarrow \sI_{Y|S}\left((b_0-1)F\right) \to 0,
\end{equation}
then $\sE$ is a $\mu$-stable instanton bundle with charge $k=b_0(\alpha-1)+1$, such that $\sE_L\cong \sO_\p \oplus \sO_\p(-1)$ for the generic line L in the fibers.
\end{proposition}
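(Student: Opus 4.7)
The plan is to adapt the strategy of Proposition \ref{serreexistence2} to this setting: verify the cohomological instantonic conditions directly from the defining sequence \eqref{serreinsta2}, and then establish $\mu$-stability and the splitting on a generic line by induction on $\alpha$, with the base case being the identification of $\sE$ with the (unique, stable) twisted Ulrich bundle $\Omega_{S|\p}(H-F)$.

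For $h^0(S,\sE)=0$ and $h^1(S,\sE(-H))=0$, I would take the long exact sequence associated with \eqref{serreinsta2} (respectively, with its twist by $\sO_S(-H)$). By Lemma \ref{lem0}, the contribution of the left-hand line bundle vanishes in every relevant degree, so it remains to control $h^*(S,\sI_{Y|S}(-jH+(b_0-1)F))$ for $j=0,1$. The vanishing in degree zero follows because each $D_i$ satisfies $D_iF=1$, so that $Y$ cannot sit in any divisor of class $(b_0-1)F$ (a union of fibers). The vanishing of higher cohomology reduces, via the ideal sequence \eqref{sesideal}, to the fact that $\sO_S(-H+(b_0-1)F)|_{D_i}$ has degree $-b_0+(b_0-1)=-1$ on each rational component, hence is acyclic.

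For $\mu$-stability I would proceed by induction on $\alpha$. In the base case $\alpha=1$, a Chern-class computation using the Euler sequence \eqref{eq1} shows that $\Omega_{S|\p}(H-F)$ has exactly the invariants of $\sE$, and a dimension count via \eqref{eq2} produces a section of $\Omega_{S|\p}(2H-b_0F)$ vanishing on a curve of class $H^2-b_0HF$; the uniqueness clause of Proposition \ref{HScorr} then forces $\sE\cong\Omega_{S|\p}(H-F)$, which is $\mu$-stable by \cite[Theorem 3.8]{AHMP}. For $\alpha=t+1$, setting $Y'=Y\setminus D_{t+1}$ and letting $\sE'$ denote the bundle associated with $Y'$, the Snake-lemma manipulation carried out in Proposition \ref{serreexistence2} yields a short exact sequence
\[
0\to\sE\to\sE'\to\sO_{D_{t+1}}((b_0-1)F)\to 0,
\]
so any subsheaf destabilizing $\sE$ would destabilize $\sE'$, contradicting the inductive hypothesis. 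The generic line splitting $\sE|_L\cong\sO_\p\oplus\sO_\p(-1)$ then propagates through the same sequence restricted to a line $L\in|HF|$ disjoint from $Y$ (such lines form a dense open set), with the base case supplied by Lemma \ref{restriction}.

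The hardest part is not the cohomological bookkeeping but the geometric input that makes the whole construction possible: the assumptions $c=2b_0$ (even degree) and $a_2<a_0+a_1$ (which forces $r(S)=1$) are used through Theorem \ref{hsection} to guarantee that a general hyperplane section is isomorphic to $S(b_0,b_0)\cong\p\times\p$; it is precisely inside this surface that one of the rulings produces a one-parameter family of pairwise disjoint rational normal curves of class $H^2-b_0HF$ on which the iterative Construction \ref{disjointc0} can be performed. Without this structural ingredient, neither the existence of enough disjoint $D_i$'s nor the identification of the base-case Ulrich bundle would go through.
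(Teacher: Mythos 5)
Your overall strategy --- verify the cohomological conditions from the defining sequence, then prove $\mu$-stability and the generic splitting type by induction on $\alpha$ via elementary transformations along the curves $D_i$ --- is the strategy the paper uses for Proposition \ref{serreexistence2} (the case $a_0=a_1$), not the one it uses here. The cohomological part of your argument ($h^0(S,\sE)=0$ and the acyclicity of $\sE(-H)$) is correct and matches the paper. But the base case of your induction has a genuine gap. You claim that for $\alpha=1$ a dimension count produces a section of $\Omega_{S|\p}(2H-b_0F)$ vanishing on a curve of class $H^2-b_0HF$, and that the uniqueness clause of Proposition \ref{HScorr} then forces $\sE\cong\Omega_{S|\p}(H-F)$. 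Uniqueness in Serre's correspondence identifies the bundle attached to a \emph{fixed} curve; to conclude $\sE\cong\Omega_{S|\p}(H-F)$ you must exhibit a section of $\Omega_{S|\p}(2H-b_0F)$ vanishing in pure codimension two exactly along the particular curve $D_1$ chosen in Construction \ref{disjointc0}. In the $S(a,a,b)$ setting this is precisely Lemma \ref{sectionulrich}, and it is easy there because $h^0(S,\Omega_{S|\p}(2H-(a+b)F))=1$. Here, by \eqref{eq2} and $a_i<b_0$, one gets $h^0(S,\Omega_{S|\p}(2H-b_0F))=b_0+3$, and moreover $h^0(S,\Omega_{S|\p}(2H-(b_0+1)F))=b_0>0$, so some sections even vanish along fibers; it is no longer automatic that the zero locus of a section is one of the curves $D_i$, let alone the prescribed $D_1$. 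No analogue of Lemma \ref{sectionulrich} is proved (or available without further work) in this setting, and without it the base case --- and hence the whole induction, both for stability and for the splitting type on lines --- is unsupported.

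This is exactly where the paper takes a different route: it proves $\mu$-stability for all $\alpha$ at once by applying Hoppe's criterion (Proposition \ref{hoppe}) to the sequence \eqref{serreinsta2}, checking $H^0(S,\sE(\gamma H+\beta F))=0$ for $2\gamma b_0+\beta\le 1$ through explicit inequalities in which the hypotheses $c=2b_0$ and $a_2<a_0+a_1$ (i.e.\ $a_2<b_0$) enter directly; and it obtains $\sE_L\cong\sO_\p\oplus\sO_\p(-1)$ by restricting to a fiber $F\cong\pp^2$, showing $h^0(F,\sE\otimes\sO_F)=0$ so that $\sE\otimes\sO_F$ is a stable normalized bundle on $\pp^2$, and invoking Grauert--M\"ulich, rather than propagating the splitting from a base case. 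If you wish to keep your inductive scheme, you must either prove the missing analogue of Lemma \ref{sectionulrich} for $\Omega_{S|\p}(2H-b_0F)$ and the given $D_1$, or replace the base case by a direct stability argument --- which essentially amounts to reproducing the paper's proof.
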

\begin{proof}
By Construction \ref{disjointc0} we have $c_1(\sE)=-H+(c-2)F$ and $c_2(\sE)=\alpha H^2-(b_0(\alpha+1)-1)HF$.

First we check that $\sE$ has no section. Indeed taking the cohomology of \eqref{serreinsta2} we obtain $H^0(S,\sE)=H^0(Y,\sI_{Y|S}\bigl((b_0-1)F\bigr))$. However $Y$ is not contained in any surface in the class $(b_0-1)F$. Indeed any $D_j$ intersect each fiber in one point, so $Y \not\subset (b_0-1)F$ and $H^0(Y,\sI_Y((b_0-1)F))=0$.

Now we prove that $\sE$ satisfies the instantonic condition. If we tensor \eqref{serreinsta2} by $\sO_X(-H)$ we obtain $H^i(S,\sE(-H))=H^i(Y,\sI_{Y|S}\bigl(-H+(b_0-1)F\bigr))=0$ for all $i$.

The next step is to prove the $\mu$-stability, and in order to do so we will use Proposition \ref{hoppe}. Let $D=\gamma H+\beta F$ be a divisor on $S$ such that $2\gamma b_0+\beta\leq 1$. We want to show that $H^0(S,\sE(D))=0$.
Let us begin by tensoring \eqref{serreinsta2} by $\sO_S(D)$. Now we show that $H^0(S,\sO_S\bigl((\gamma-1)H+(b_0+\beta-1)F)\bigr)=0$ if $2\gamma b_0+\beta \leq 1$.

It is clear that when $\gamma<1$ this group vanishes by Lemma \ref{lem0}. Now suppose $\gamma \geq 1$. By Lemma \ref{lem0} we have $H^0(S,(\sO_S\bigl((\gamma-1)H+(b_0+\beta-1)F\bigr)))=0$ if and only if $a_2(\gamma-1)+b_0+\beta-1<0$. But $\beta \leq 1-2\gamma b_0 $, thus $a_2(\gamma-1)+b_0+\beta-1\leq(\gamma-1)(a_2-2b_0)-b_0<0$ and $H^0(S,\sO_S\bigl((\gamma-1)H+(b_0+\beta-1)F\bigr))=0$.

Now we show that $H^0(Y,\sI_Y(\gamma H+(b_0-1+\beta)F))=0$. In order to do so we tensor $$0\to \sI_Y \to \sO_S \to\sO_Y\to0$$ by $\sO_S( \gamma H+(b_0-1+\beta)F)$ and consider the long exact sequence induced in cohomology. In particular $H^0(Y,\sI_Y(\gamma H+(b_0-1+\beta)F))\leq H^0(S,\sO_S(\gamma H+(b_0-1+\beta)F))$ and by Lemma \ref{lem0} the latter vanishes for $\gamma<0$. Now suppose $\gamma=0$. In this case $H^0(Y,S,\sI_Y((b_0-1+\beta)F))=0$, since $Y$ is not contained in any surface in the class $(b_0-1+\beta)F$. It remains to consider the case $\gamma>0$. In this cases, by Lemma \ref{lem0}, $H^0(\sO_S(\gamma H+(b_0-1+\beta)F))=0$ if and only if $\gamma a_2+b_0-1+\beta <0$. We have $\beta \leq1-2\gamma b_0$ thus
 $\gamma a_2+b_0-1+\beta\leq \gamma a_2+b_0-1+1-2\gamma b_0=\gamma(a_2-2b_0)+b_0=\gamma(a_2-c)+\frac{c}{2}$. Recall by hypothesis that $a_2< \frac{c}{2}$ thus $\gamma(a_2-c)+\frac{c}{2}<0$ and $H^0(Y,\sI_Y(\gamma H+(b_0-1+\beta)F))=0$. Finally we have $H^0(S,\sE(\gamma H+\beta F))=0$ if $\gamma c+\beta \leq1$, thus $\sE$ is stable.

To conclude this proof we show that $\sE_L\cong \sO_\p \oplus \sO_\p(-1)$ for the generic line L in the fibers. Indeed let us consider a fiber $F$ and tensor \eqref{serreinsta2} by $\sO_F$. Taking the long exact sequence in cohomology we get
\[
h^0(F,\sE \otimes \sO_{F})\leq h^0(F,\sO_{F}(-1))+h^0(F,\sI_{Y\cap F|F}).
\]
But $h^0(F,\sO_{F}(-1))=0$ since $F$ is isomorphic to $\pp^2$. Moreover each component of $Y$ intersect transversely each fiber $F$, thus $Y\cap F\neq \emptyset$ and $h^0(F,\sI_{Y\cap F|F})=0$. In particular we have $h^0(F,\sE \otimes \sO_{F})=0$. $\sE \otimes \sO_{F}$ is a normalized rank two vector bundle on $\pp^2$ , i.e. $c_1(\sE \otimes \sO_{F})=-1$, which has no section, thus it is stable. By Grauert-M\"ulich Theorem $\sE \otimes \sO_{F}$ is isomorphic to $\sO_\p\oplus \sO_\p(-1)$ when restricted to the generic line in the fibers.
\end{proof}
In the following proposition we study the component of the moduli space of $\mu$-stable bundle containing the vector bundles which existence is guaranteed by Proposition \ref{serreexistence3}.
\begin{proposition}
Let $\sE$ be a vector bundle on $S(a_0,a_1,a_2)$ with $a_0+a_1+a_2=2b_0$ and $a_2<a_0+a_1$ with Chern classes $c_1(\sE)=-H+(c-2)F$, $c_2(\sE)=\alpha H^2-(b_0(\alpha+1)-1)HF$ with $\alpha\geq 1$. For each $\alpha\geq1$ there exists an irreducible component
\[
\MI_S^{\ 0}\bigl(\alpha H^2-(b_0(\alpha+1)-1)HF\bigr) \subseteq \MI_S\bigl(\alpha H^2-(b_0(\alpha+1)-1)HF\bigr)
\]
which is generically smooth of dimension $2c-10-2K_Sc_2(\sE)$ and containing all points corresponding to the bundles obtained in Proposition \ref{serreexistence3}
\end{proposition}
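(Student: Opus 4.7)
The plan is to parallel the proof of Proposition \ref{serremoduli1}: first show $\Ext^2(\sE,\sE)=\Ext^3(\sE,\sE)=0$ (yielding smoothness), then read off the dimension from Riemann--Roch, and finally exhibit an irreducible family of such bundles. Because $\sE$ is $\mu$-stable by Proposition \ref{serreexistence3} it is simple, and $\Ext^3(\sE,\sE)=0$ would follow from \cite[Lemma 2.1]{CCGM} as in Construction \ref{deformation}. To handle $\Ext^2$, I would tensor \eqref{serreinsta2} by $\sE^\vee\cong\sE(H-(c-2)F)$, using the hypothesis $c=2b_0$, to obtain
\[
0\to \sE((1-b_0)F)\to \sE\otimes\sE^\vee\to \sI_{Y|S}\otimes \sE(H-(b_0-1)F)\to 0,
\]
so that $\Ext^2(\sE,\sE)$ sits between $H^2(S,\sE((1-b_0)F))$ and $H^2(S,\sE\otimes\sI_{Y|S}(H-(b_0-1)F))$, and it will then suffice to kill both.

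The key technical step is these two $H^2$ vanishings. For the first, I would twist \eqref{serreinsta2} by $\sO_S((1-b_0)F)$; since $\sO_S(-H)$ is acyclic by Lemma \ref{lem0}, one gets $H^2(S,\sE((1-b_0)F))\cong H^2(S,\sI_{Y|S})\cong H^1(Y,\sO_Y)$, which vanishes because $Y$ is a disjoint union of smooth rational curves. For the second, my plan is to first show $H^2(S,\sE(H-(b_0-1)F))=0$ by twisting \eqref{serreinsta2} by $\sO_S(H-(b_0-1)F)$ and reducing to $H^2(S,\sI_{Y|S}(H))=0$, using that $\sO_S(H)|_{D_i}\cong \sO_{\mathbb P^1}(b_0)$ on each component of $Y$. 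Then tensoring the ideal-sheaf sequence $0\to\sI_{Y|S}\to\sO_S\to\sO_Y\to 0$ by $\sE(H-(b_0-1)F)$ reduces the desired vanishing to $H^1(Y,\sN_{Y|S})$, and the latter is zero since $\sN_{D_i|S}\cong \sO_{\mathbb P^1}(b_0)\oplus\sO_{\mathbb P^1}$ on each $D_i$. I expect this to be the most delicate point, and it relies crucially on the hypotheses $a_2<a_0+a_1$ and $c$ even, which force $Y$ to sit inside a hyperplane section $Q\cong\mathbb P^1\times\mathbb P^1$ and so ensure this clean splitting of $\sN_{Y|S}$.

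For the dimension, Riemann--Roch applied to the rank four bundle $\sE\otimes\sE^\vee$ yields $\chi(\sE\otimes\sE^\vee)=11-2c+2K_Sc_2(\sE)$, and combining with the previous step gives $\dim\Ext^1(\sE,\sE)=1-\chi=2c-10-2K_Sc_2(\sE)$. For irreducibility, I would let $\Gamma(S)$ denote the Hilbert scheme of rational normal curves on $S$ in the class $H^2-b_0HF$; each such curve appears as a ruling of a generic hyperplane section $Q\cong\mathbb P^1\times\mathbb P^1$, and the family of such hyperplane sections is itself parameterized by an open subset of $|H|$, so $\Gamma(S)$ is irreducible. Then $\Gamma(S)^{\times\alpha}$ is irreducible and the open subset $\mathcal{U}\subseteq\Gamma(S)^{\times\alpha}$ consisting of pairwise disjoint $\alpha$-tuples is non-empty by Construction \ref{disjointc0}. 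Serre correspondence (Proposition \ref{HScorr}) attaches to each point of $\mathcal{U}$ a unique bundle $\sE$, producing a classifying morphism $u\colon\mathcal{U}\to\MI_S(c_2(\sE))$ whose image lies entirely in the smooth locus by the Ext vanishings; thus $u(\mathcal{U})$ sits in a single irreducible component $\MI_S^{\ 0}(c_2(\sE))$, generically smooth of the stated dimension.
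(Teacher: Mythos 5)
Your proposal is correct and follows essentially the same route as the paper, which omits this proof explicitly because it is analogous to that of Proposition \ref{serremoduli1}: simplicity kills $\Ext^3$, tensoring \eqref{serreinsta2} by $\sE^\vee\cong\sE(H-(2b_0-2)F)$ reduces $\Ext^2$ to the two $H^2$ vanishings you establish via the ideal-sheaf sequence and the splitting $\sN_{D_i|S}\cong\sO_{\p}(b_0)\oplus\sO_{\p}$, Riemann--Roch gives the dimension, and irreducibility comes from the irreducible parameter space of disjoint $\alpha$-tuples of curves together with the uniqueness in the Serre correspondence.
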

\begin{proof}
We omit the proof of this proposition because the strategy is analougous to the one of Proposition \ref{serremoduli1}.
\end{proof}
Combining Construction \ref{deformation} and Proposition \ref{serreexistence3} we are finally able to prove the following theorem
\begin{theorem}\label{esistenzatotale2}
Let $S=S(a_0,a_1,a_2)$ be a rational normal scroll of even degree with $a_0+a_1>a_2$. Then for any $k_1\geq0$ and $k_2\geq 1-ck_1+b_0(k_1+1)$ there exists a $\mu$-stable instanton bundle with $c_2(\sE)=k_1H^2+k_2HF$ which is generically trivial on lines in the fibers and such that
\[
\Ext_X^1(\sE,\sE)=2\left(c-5+2k_1(c+1)+3k_2\right), \qquad \Ext_X^2(\sE,\sE)=\Ext_X^3(\sE,\sE)=0.
\]

In particular there exists, inside the moduli space $\MI(k_1H^2+k_2HF)$ of $\mu$-stable instanton bundles with $c_2(\sE)=k_1H^2+k_2HF$, a generically smooth, irreducible component of dimension $2\left(c-5+2k_1(c+1)+3k_2\right)$.
\end{theorem}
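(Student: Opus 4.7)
The plan is to follow the strategy already used for Theorem \ref{esistenzatotale}: take the instanton bundles supplied by Proposition \ref{serreexistence3} as base cases and iterate Construction \ref{deformation} to increase the $HF$ coefficient of $c_2$ one unit at a time. For each fixed $k_1\geq 1$, Proposition \ref{serreexistence3} produces a $\mu$-stable instanton with $c_2(\sE)=k_1H^2+(1-b_0(k_1+1))HF$, which, writing $c=2b_0$, realises the smallest admissible value of $k_2$ in the statement. Each application of Construction \ref{deformation} then preserves $k_1$ while increasing $k_2$ by one, so induction on $k_2$ covers every admissible pair.

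Before iterating, I would check that the base bundle satisfies the three hypotheses required by Construction \ref{deformation}. The $\mu$-stability and the generic splitting $\sE_L\cong\sO_\p\oplus\sO_\p(-1)$ on lines $L\in|HF|$ are already part of the conclusion of Proposition \ref{serreexistence3}. The vanishing $\Ext^2(\sE,\sE)=\Ext^3(\sE,\sE)=0$ is not stated there but can be established by the same argument used in Proposition \ref{serremoduli1}: tensor \eqref{serreinsta2} by $\sE^\vee\cong\sE(H-(2b_0-2)F)$, take cohomology, and use the identification $\sE(H-(b_0-1)F)\otimes\sO_Y\cong\sN_{Y|S}\cong\sO_\p(b_0)\oplus\sO_\p$ together with the acyclicity of the relevant twists of $\sI_{Y|S}$ on $S$ to kill both $H^2$ terms bounding $\Ext^2(\sE,\sE)$, while $\Ext^3$ vanishes via simplicity.

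The main obstacle I anticipate is precisely this $\Ext^2$ vanishing, because the exponents appearing in the restriction of the twists to the components $D_i$ of $Y$ depend on the invariants $a_0,a_1,a_2$ in a subtler way than in the $S(a,a,b)$ case. Here the hypothesis $a_0+a_1>a_2$ combined with even degree guarantees, via Theorem \ref{hsection}, that the general hyperplane section has balanced invariants $(b_0,b_0)$; this is precisely what forces $\sN_{D_i|S}$ to split as $\sO_\p(b_0)\oplus\sO_\p$ with nonnegative summands, so that the cohomological computations mirror those carried out in the $a_0=a_1$ case.

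Once the base case is in hand the rest is formal. The properties $\mu$-stability, generic triviality on fibre lines, and $\Ext^{\geq 2}(\sE,\sE)=0$ are all preserved by Construction \ref{deformation}, so induction yields instanton bundles for every admissible $(k_1,k_2)$. The dimension $\dim\Ext^1(\sE,\sE)=2\left(c-5+2k_1(c+1)+3k_2\right)$ then follows from Riemann--Roch applied to $\sE\otimes\sE^\vee$, using $c_1(\sE\otimes\sE^\vee)=c_3(\sE\otimes\sE^\vee)=0$ and $c_2(\sE\otimes\sE^\vee)=4c_2(\sE)-c_1(\sE)^2$; irreducibility of the moduli component is inherited from the irreducible family of choices of disjoint curves $D_i$ at the base step, together with the fact that Construction \ref{deformation} produces deformations lying in a single connected component of $\MI(k_1H^2+k_2HF)$.
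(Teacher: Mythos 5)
Your proposal follows exactly the paper's argument: the paper's proof of this theorem is precisely the observation that the bundles from Proposition \ref{serreexistence3} satisfy the input hypotheses of Construction \ref{deformation} (with the $\Ext^2$ and $\Ext^3$ vanishing supplied by the moduli-space proposition proved as in Proposition \ref{serremoduli1}), after which iteration of the construction and Riemann--Roch give the statement. The only quibble is that the base case from Proposition \ref{serreexistence3} with $\alpha=k_1$ has $k_2=1-b_0(k_1+1)$, which is actually below the stated lower bound $1-ck_1+b_0(k_1+1)=1-b_0(k_1-1)$ rather than equal to it, but this only means the range covered is larger than claimed and does not affect the argument.
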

\begin{proof}
Observe that the bundles whose existence is guaranteed by Proposition \ref{serreexistence3} satisfy all the starting hypothesis of Construction \ref{deformation}. Thus the theorem follows by combining Proposition \ref{serreexistence3} and Construction \ref{deformation}.
\end{proof}

\bibliographystyle{amsplain}

\bigskip
\noindent
Vincenzo Antonelli,\\
Dipartimento di Scienze Matematiche, Politecnico di Torino,\\
c.so Duca degli Abruzzi 24,\\
10129 Torino, Italy\\
e-mail: {\tt vincenzo.antonelli@polito.it}

\bigskip
\noindent
Francesco Malaspina,\\
Dipartimento di Scienze Matematiche, Politecnico di Torino,\\
c.so Duca degli Abruzzi 24,\\
10129 Torino, Italy\\
e-mail: {\tt francesco.malaspina@polito.it}

\end{document}